\numberwithin{equation}{section}
\renewcommand{\bf}{\textbf}
\newcommand{\leqs}{\leqslant}
\newtheorem{theorem}{Theorem}
\newtheorem{propn}[theorem]{Proposition}
\newtheorem{thm}{Theorem}[section]
\newtheorem{prop}[thm]{Proposition}
\newtheorem{lem}[thm]{Lemma}
\newtheorem{corol}[thm]{Corollary}
\newtheorem{con}[theorem]{Conjecture}
\theoremstyle{definition}
\newtheorem{defn}[thm]{Definition}
\begin{document}

\title{On finite simple images of triangle groups}
\author{Sebastian Jambor, Alastair Litterick, Claude Marion}

\date{\today}
\maketitle

\noindent\textbf{Abstract.}  For a simple algebraic group $G$ in characteristic $p$, 
a triple $(a,b,c)$ of positive integers is said to be rigid for $G$ if the dimensions of the subvarieties of $G$ of elements of order dividing $a,b,c$ sum to $2\dim G$. In this paper we complete the proof of a conjecture of the third author, that for a rigid triple $(a,b,c)$ for $G$ with $p>0$, the triangle group $T_{a,b,c}$ has only finitely many  simple images of the form  $G(p^r)$. We also obtain further results on the more general form of the conjecture, where the images $G(p^r)$ can be arbitrary  quasisimple groups of type $G$.

\section{Introduction}\label{s:introduction}
It has been known for some time \cite[Theorem B]{AB} that every finite simple group can be generated by two elements. 
It is natural to ask, therefore, whether a given finite simple group $G_0$ can  be generated by two elements, whose orders respectively divide given integers $a$ and $b$, and whose product has order dividing a given integer $c$.  A finite group generated by two such elements is called an $(a,b,c)$-group or said to be $(a,b,c)$-generated. Equivalently, an $(a,b,c)$-group is a finite quotient of the triangle group $T=T_{a,b,c}$ with presentation
$$ T=T_{a,b,c}=\langle x,y,z:x^a=y^b=z^c=xyz=1\rangle. $$
When investigating the finite (nonabelian) quasisimple quotients of $T$, we can  assume that $1/a+1/b+1/c<1$ as otherwise $T$ is either soluble or $T\cong T_{2,3,5}\cong{\rm Alt}_5$ (see \cite{Conder}). The group $T$ is then a hyperbolic triangle group.  Without loss of generality, we will further assume that $a \leq b \leq c$ and call $(a,b,c)$ a hyperbolic triple of integers. (Indeed, $T_{a,b,c}\cong T_{a',b',c'}$ for any permutation $(a',b',c')$ of $(a,b,c)$.)\\

 In 1893, Hurwitz \cite{Hurwitz} showed that the group ${\rm Aut}(S)$ of automorphisms of a compact Riemann surface 
$S$ of genus $h\geq 2$ is finite and has order bounded above by $84(h-1)$. Moreover the latter bound is attained if and only if ${\rm Aut}(S)$ is a $(2,3,7)$-group.  
Following the original work of Hurwitz, the first $(a,b,c)$-groups to be investigated  were the $(2,3,7)$-groups or so called Hurwitz groups. For example, Macbeath \cite{Macbeath} determined the prime powers $q$ such that ${\rm PSL}_2(q)$ is Hurwitz.  As another illustration, Conder showed in 1980 \cite{Conder1980} that every alternating group ${\rm Alt}_n$ of degree greater than 167 is Hurwitz.  Moreover in \cite{Conder1.5} he determined the positive integers $n\leq 167$ for which ${\rm Alt}_n$ is Hurwitz, in particular ${\rm Alt}_{167}$ is not $(2,3,7)$-generated and the latter 167 bound is best possible. 
Although a lot of effort has been put into determining the finite simple Hurwitz groups, this classification is not complete. For a survey of results, see for example \cite{Conder,Conder2}.  \\ Until recently, there were much fewer results in the literature for $(a,b,c)\neq (2,3,7)$. 
By Higman's conjecture, proved to hold separately in \cite{Everitt, LSalt}, every  Fuchsian group, and so  also every hyperbolic $T_{a,b,c}$, surjects onto all but finitely many alternating groups. \\
In \cite{Marionconj} the third author introduced some new methods, in particular the concept of rigidity, for investigating whether  a finite quasisimple group of Lie type is an $(a,b,c)$-group.\\

Let $G$ be a simple algebraic group defined over an algebraically closed field $K$ of  characteristic ${\rm char}(K)=p$ (possibly equal to 0).   For  a positive integer $u$, we let $G_{[u]}$ be the subvariety of $G$ consisting of elements of $G$ of order dividing  $u$ and set $j_u(G)= \dim G_{[u]}$. \\
Recall that a finite quasisimple group $G_0$ of Lie type  occurs as the derived subgroup of the  fixed point group  of a simple algebraic group $G$, when ${\rm char}(K)=p$ is  prime, under a Steinberg endomorphism  $F$, i.e. $G_0=(G^F)'$.    We use the standard notation $G_0=(G^F)'= G(q)$ where $q=p^r$ for some positive integer $r$. (We include the possibility that $G(q)$ is of twisted type.)\\
 Following \cite{Marionconj}, given a simple algebraic group $G$ and a triple $(a,b,c)$ of positive integers, we say that $(a,b,c)$ is rigid for $G$ if the sum $j_a(G)+j_b(G)+j_c(G)$ is equal to $2\dim G$. When the  latter sum is less (respectively, greater) than $2\dim G$, we say that $(a,b,c)$ is reducible (respectively, nonrigid)
 for $G$. \\
The main purpose of this paper is to further investigate the rigid case and make further progress on the finiteness conjecture of the third author  (first formulated for triples $(a,b,c)$ of primes in \cite{Marionconj}):

\begin{con}\label{c:marionconj} 
Let $G$ be a simple algebraic group defined over an algebraically closed field $K$ of prime characteristic $p$ and let $(a,b,c)$ be a rigid triple of integers for $G$.  Then there are only finitely many  quasisimple groups $G(p^r)$ (of the form $(G^F)'$ where $F$ is a Steinberg endomorphism) that are $(a,b,c)$-generated.  
\end{con}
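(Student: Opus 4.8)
The plan is to reduce the statement to a purely geometric finiteness assertion in $G$, where rigidity is the decisive hypothesis, and then to conclude by pigeonhole. Throughout we may assume $1/a+1/b+1/c<1$, since otherwise $T=T_{a,b,c}$ is soluble (hence has no quasisimple quotient) or $T\cong{\rm Alt}_5$ (and the statement is immediate); we may also freely discard any fixed finite set of $r$. A surjection $T\twoheadrightarrow G(p^r)$ is the same datum as a triple $(x,y,z)$ of elements of $G(p^r)$ with $x^a=y^b=z^c=xyz=1$ and $\langle x,y\rangle=G(p^r)$. \textbf{Step 1 (Lie primitive triples).} I would first show that for all but finitely many $r$ the group $G_0=G(p^r)$ is contained in no proper closed subgroup of $G$ of positive dimension; consequently the generating triple above is \emph{Lie primitive}, in the sense that $\langle x,y\rangle$ lies in no such subgroup. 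Indeed, if $G_0\leq L$ with $L$ closed of positive dimension, then replacing $L$ by the intersection of its Steinberg‑endomorphism translates we may take $L$, and hence $L^\circ$, to be $F$‑stable; as $G_0$ is quasisimple and $r$ is large we get $G_0\leq L^\circ$, so $|G_0|\leq|(L^\circ)^F|\leq c\,q^{\dim L^\circ}$ with $q=p^r$ and $\dim L^\circ\leq\dim G-1$, contradicting $|G_0|\gg q^{\dim G}$. One also records the standard fact that a Lie primitive $\langle x,y\rangle$ has \emph{finite} centralizer in $G$: a positive‑dimensional $C_G(\langle x,y\rangle)^\circ$ would place $\langle x,y\rangle$ inside the proper positive‑dimensional closed subgroup $C_G\big(C_G(\langle x,y\rangle)^\circ\big)$.

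\textbf{Step 2 (finiteness of Lie primitive triples --- the crux).} Since $x,y,z$ have order dividing $a,b,c$, their semisimple parts have eigenvalues among the $a$‑th, $b$‑th and $c$‑th roots of unity while their unipotent parts lie among the finitely many unipotent classes of $G$; hence there are only finitely many possibilities for the triple $\mathbf{C}=(C_1,C_2,C_3)$ of $G$‑conjugacy classes of $x,y,z$, and $\dim C_1+\dim C_2+\dim C_3\leq j_a(G)+j_b(G)+j_c(G)=2\dim G$ by rigidity. For each such $\mathbf{C}$ set
\[
X_{\mathbf{C}}=\{(x,y,z)\in C_1\times C_2\times C_3:\ xyz=1\},
\]
a variety on which $G$ acts by simultaneous conjugation, and let $X_{\mathbf{C}}^{\mathrm{prim}}\subseteq X_{\mathbf{C}}$ be the (constructible) locus of Lie primitive triples. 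The key estimate to establish is
\[
\dim X_{\mathbf{C}}^{\mathrm{prim}}\ \leq\ \dim C_1+\dim C_2+\dim C_3-\dim G\ \leq\ \dim G,
\]
obtained from the multiplication map $m\colon C_1\times C_2\to G$, $(x,y)\mapsto(xy)^{-1}$, under the projection $(x,y,z)\mapsto(x,y)$ identifying $X_{\mathbf{C}}$ with $m^{-1}(C_3)$: at a Lie primitive point the cokernel of $dm$ is governed by the Lie algebra of the finite centralizer $C_G(\langle x,y\rangle)$, so $m$ is dominant and $X_{\mathbf{C}}$ has the expected dimension there. By Step~1 every Lie primitive triple lies in a $G$‑orbit of dimension exactly $\dim G$; since $\dim X_{\mathbf{C}}^{\mathrm{prim}}\leq\dim G$ and a variety has only finitely many irreducible components, $X_{\mathbf{C}}^{\mathrm{prim}}$ is a finite union of $G$‑orbits. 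Ranging over the finitely many $\mathbf{C}$, we conclude that there are only \emph{finitely many} $G$‑conjugacy classes of Lie primitive triples $(x,y,z)$ with $x^a=y^b=z^c=xyz=1$.

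\textbf{Step 3 (conclusion).} Suppose $G(p^r)$ were $(a,b,c)$‑generated for infinitely many $r$. Discarding the finite exceptional set of Step~1, each such $r$ produces via Steps~1--2 a Lie primitive triple $t_r=(x_r,y_r,z_r)$ with $\langle x_r,y_r\rangle=G(p^r)$, and by Step~2 and pigeonhole infinitely many $t_r$ lie in a single $G$‑conjugacy class. Conjugate triples generate conjugate, hence isomorphic, subgroups, so $G(p^r)\cong G(p^{r'})$ for infinitely many pairs $r\neq r'$; this is impossible since the orders $|G(p^r)|$ are strictly increasing in $r$ outside a finite set. Hence only finitely many quasisimple $G(p^r)$ are $(a,b,c)$‑generated, which is Conjecture~\ref{c:marionconj}.

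\textbf{The main obstacle.} The entire weight of the argument rests on the dimension bound for $X_{\mathbf{C}}^{\mathrm{prim}}$ in Step~2. Proving $\dim X_{\mathbf{C}}^{\mathrm{prim}}\leq\sum\dim C_i-\dim G$ requires a careful analysis of the special fibres of the multiplication map, where the naive count is false, and in small characteristic it runs into genuine separability issues: when $p$ divides one of $a,b,c$ the elements carry unipotent parts, the scheme‑theoretic centralizer of $\langle x,y\rangle$ may fail to be smooth, and the invariants $j_u(G)$ --- hence the very notion of rigidity --- depend on $p$. I expect the cases that remain to be settled to be exactly those finitely many low‑rank groups and exceptional class configurations in which these estimates are tight or break down, and to be handled by a direct, partly computer‑assisted, examination of the finitely many Lie primitive triples that can occur.
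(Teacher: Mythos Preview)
First, note that the statement is a \emph{conjecture} which the paper does not prove in full: Theorem~\ref{t:cormain} leaves open the cases in Table~\ref{ta:marionconjex}. Your proposal claims a complete proof, so it would have to succeed where the paper stops --- and it does not.

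The gap is in Step~2. The inequality $\dim X_{\mathbf C}^{\mathrm{prim}}\le\sum\dim C_i-\dim G$ is not a consequence of Lie primitivity. Your justification conflates two different invariants: the centralizer $C_G(\langle x,y\rangle)$ controls $H^0(T,\mathfrak g)$, hence the stabilizer dimension and thus the \emph{orbit} dimension; the excess of $\dim X_{\mathbf C}$ over the orbit dimension is governed by $H^1(T,\mathfrak g)$, which has nothing to do with finiteness of the group-theoretic centralizer. When $p\nmid abc$ the elements are semisimple, their scheme centralizers are smooth, Weil's formula ties $\dim H^1-\dim H^0$ to the $\dim C_i$, and rigidity together with $p\nmid d$ forces $H^1=0$ --- this is precisely Theorem~\ref{t:conmainllm}, quoted here as prior work. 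When $p\mid abc$, however, $\dim\mathfrak g^x$ can strictly exceed $\dim C_G(x)$, the tangent-space count decouples from the class-dimension count, and there is no general bound on $H^1$. You flag this yourself as ``the main obstacle'', but it is not a residual technicality: it is the entire content of the problem beyond Theorem~\ref{t:conmainllm}, and the entries of Table~\ref{ta:marionconjex} remain open exactly because no one can control $X_{\mathbf C}^{\mathrm{prim}}$ there.

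The paper's route past Theorem~\ref{t:conmainllm} is entirely different from your Step~2. It invokes the classification of rigid triples (Theorem~\ref{t:classification}) to reduce the adjoint statement to a short explicit list, then disposes of each case separately: type $A$, $C_2$ with $(a,b)=(2,3)$, and $G_2$, by embedding $G$ in an ambient $\mathrm{SL}_n$ and using \emph{linear} rigidity there (Lemmas~\ref{l:almostred}--\ref{l:almostrig}, Propositions~\ref{p:marionconjtypea}--\ref{p:marionconjtypeg2one}); and $\mathrm{PSp}_4$ with $(a,b)=(3,3)$ --- the genuinely new case --- by a character-field computation (\S\ref{s:sp433}) showing, via Procesi-type trace identities and a Gr\"obner-basis verification, that the trace field of any absolutely irreducible $(3,3,c)$-subgroup of $\mathrm{Sp}_4$ is a quotient of a fixed finite ring depending only on $c$ and $p$. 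None of this structure is recoverable from a uniform dimension estimate of the kind you sketch.
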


In  \cite{Marionconj} the third author made some progress on Conjecture \ref{c:marionconj} in the special case where $a$, $b$ and $c$ are prime numbers. Given a simple algebraic group $G$ he determined in \cite[Theorem 3]{Marionconj} the rigid triples $(a,b,c)$ of primes that are rigid for $G$. The classification of rigid triples of primes for simple algebraic groups reduces Conjecture \ref{c:marionconj} (for triples of primes) to a handful of cases which are easy to describe. Moreover some of these cases can be handled using the concept of linear rigidity for a triple of elements in a general linear group  defined over $K$.   Following \cite[Theorems 1-3]{Marionconj} Conjecture \ref{c:marionconj}  holds in many cases when $a$, $b$ and $c$ are primes:  for example for $G$ not of type $C_\ell$ with $2\leq \ell \leq 13$ nor $G_2$. \\
Using deformation theory, Larsen, Lubotzky and the third author proved in \cite[Theorem 1.7]{LLM1} that Conjecture \ref{c:marionconj} holds except possibly if $p$ divides $abcd$ where $d$ is the determinant of the Cartan matrix of $G$ (see Theorem \ref{t:conmainllm} below). \\
In the proof of \cite[Theorem 1.7]{LLM1}, given a simple algebraic group $G$   and  a rigid triple $(a,b,c)$ of integers for $G$, one shows that if $p\nmid abcd$ then a surjective homomorphism $\rho: T\rightarrow G(p^r)$, seen as an element of the representation variety ${ \rm Hom}(T,G)$  of  homomorphisms from $T$ to $G$, is locally rigid. That is, there is a neighbourhood of $\rho$ in which every element  is obtained from $\rho$ by conjugation by an element of $G$.  In particular, the orbit of $\rho$ under the action of $G$ by conjugation is open and the conclusion of \cite[Theorem 1.7]{LLM1} follows from the fact that in a variety one can have only finitely many open orbits.  The assumption  in \cite[Theorem 1.7]{LLM1} that $p\nmid abcd$ is crucial. Indeed as $p\nmid abc$, there is a formula of Weil \cite{Weil} for computing the dimension of  the cohomology group $H^1(T, {\rm Ad}\circ \rho)$ where ${\rm Ad}: G \rightarrow {\rm Aut}({\rm Lie}(G))$ is the adjoint representation of $G$. Moreover, since $p\nmid d$ one can show that    $\dim H^1(T, {\rm Ad}\circ \rho)=0$ and then \cite{Weil} yields that $\rho$ is locally rigid as an element of ${\rm Hom}(T,G)$.\\
Although \cite[Theorem 1.7]{LLM1} is a major achievement in the study of Conjecture \ref{c:marionconj}, it leaves infinitely many open triples such as $(2,3,c)$ and $(3,3,c)$ for several families of classical groups.\\
 To make further progress on Conjecture  \ref{c:marionconj}, it  is essential to determine the rigid triples of integers (not necessarily primes) for simple algebraic groups.\\

 In this paper we examine the remaining open cases of the conjecture.  In our first result we complete the proof of the conjecture for the finite simple groups. 
 
  
  \begin{theorem}\label{t:marionconjsimple}
  Conjecture \ref{c:marionconj} holds for all finite simple groups of Lie type. That is, if $G$ is a simple algebraic group of adjoint type defined over an algebraically closed field $K$  of prime characteristic $p$ and $(a,b,c)$ is rigid for $G$, then  only finitely many simple groups $G(p^r)$ are $(a,b,c)$-groups. 
  \end{theorem}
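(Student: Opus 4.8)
The plan is to combine the deformation‑theoretic finiteness theorem of Larsen--Lubotzky and the third author with the classification of rigid triples of integers established earlier in this paper. By \cite[Theorem~1.7]{LLM1} (Theorem~\ref{t:conmainllm}) the conclusion already holds whenever $p\nmid abcd$, where $d$ is the determinant of the Cartan matrix of $G$, so throughout we may assume $p\mid abcd$. Intersecting this divisibility condition with the classification of rigid triples for $G$ of adjoint type leaves a bounded collection of ``shapes'' of configurations $(G,a,b,c)$; for some $G$ of small rank (already $A_1$) every hyperbolic triple is rigid, so this collection need not be finite, but it is organised so that a uniform argument can be run, the parameter still to be controlled being the degree $r$ of $\F_{p^r}$. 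Note that since $G$ is of adjoint type and any surjection $\rho\colon T\to G(p^r)\hookrightarrow G$ has $C_G(\rho(T))=Z(G)=1$, the $G$‑conjugacy orbit of $\rho$ in $\Hom(T,G)$ has dimension exactly $\dim G$, and distinct isomorphism types of images lie in distinct orbits; thus it suffices to show $\dim\Hom(T,G)\le\dim G$ near every surjective $\rho$, and then invoke that a variety has only finitely many orbits of maximal dimension.

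\emph{The cases $p\nmid abc$ (so $p\mid d$).} Here Weil's formula for the cohomology of $\Ad\circ\rho$ is still available. The rigidity identity $j_a(G)+j_b(G)+j_c(G)=2\dim G$ together with the inequalities $\dim C_{\frakg}(\rho(x))\ge\dim G-j_a(G)$ and their analogues for $b,c$ force the Euler characteristic $\dim H^0(T,\Ad\circ\rho)-\dim H^1(T,\Ad\circ\rho)+\dim H^2(T,\Ad\circ\rho)$ to be $\ge 0$; since $\frakg^{\rho(T)}=\mathfrak z(\frakg)=0$ as $G$ is adjoint, this yields $\dim H^1(T,\Ad\circ\rho)\le\dim H^2(T,\Ad\circ\rho)=\dim\Hom_G(\frakg,K)$ by Poincar\'e duality and surjectivity of $\rho$. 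The latter is zero unless $\frakg=\mathrm{Lie}(G)$ fails to be perfect, which happens only for an explicit short list of pairs $(G,p)$ (such as $G=\mathrm{PGL}_n$ with $p\mid n$), where it equals $1$. For all other cases $\dim H^1=0$, so $\rho$ is locally rigid and the finiteness follows from the open‑orbit argument of \cite{LLM1}; the finitely many residual pairs $(G,p)$ are treated separately, typically by realising the generating triple inside a smaller classical group and appealing to linear rigidity, or by checking that the extra cocycle direction is obstructed so that $\Hom(T,G)$ still has dimension $\dim G$ near $\rho$.

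\emph{The cases $p\mid abc$ --- the hard part.} Weil's formula is no longer available, so one estimates the Zariski tangent space $Z^1(T,\Ad\circ\rho)$ to $\Hom(T,G)$ at a surjective $\rho$ directly. When $p$ divides the order of one of $\rho(x),\rho(y),\rho(z)$ its Jordan decomposition acquires a nontrivial unipotent part, and the cocycle conditions --- as well as the fixed spaces $C_{\frakg}(\rho(x))$, etc.\ --- are analysed conjugacy class by conjugacy class, using separability information for the unipotent and mixed classes that occur; combined with the rigidity identity this is designed to give $\dim_\rho\Hom(T,G)\le\dim G$, whence the orbit of $\rho$, of dimension $\dim G$, is dense in an irreducible component of $\Hom(T,G)$ of dimension $\dim G$, of which there are only finitely many. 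For the classical families of bounded rank that the classification produces --- triples such as $(2,3,c)$, $(2,4,c)$, $(3,3,c)$ and their companions --- where this estimate is not clean, I would instead pass to a linear representation $T\to\mathrm{GL}_n(K)$ and invoke the theory of linearly rigid tuples in the spirit of Katz: linear rigidity of $(\rho(x),\rho(y),\rho(z))$ forces the tuple to be unique up to conjugacy, which pins down the isomorphism type, and in particular the field, of the group it generates, leaving at most one admissible $r$; the finitely many bounded‑rank configurations that are not linearly rigid are then settled by explicit, computer‑assisted computation of the relevant structure constants.

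\emph{Main obstacle.} The crux is the regime $p\mid abc$: lacking Weil's formula one has no closed formula for $\dim H^1(T,\Ad\circ\rho)$, so one must extract from the local structure of elements of order divisible by $p$ --- and from the separability of the unipotent and mixed conjugacy classes in which they lie --- enough control to keep $\dim\Hom(T,G)$ from exceeding $\dim G$ near every surjective representation, and to do so uniformly in $r$ for each of the infinite families of triples thrown up by the classification. Securing both the dimension bound and its uniformity in $r$ is where the real work lies.
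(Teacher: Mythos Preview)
Your proposal is not a proof but a programme, and the programme breaks down at exactly the case the paper identifies as the essential difficulty.

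First, the cohomological patch you propose for $p\mid d$, $p\nmid abc$ is not sound as stated. Hyperbolic triangle groups are only \emph{virtually} Poincar\'e duality groups, so there is no duality pairing giving $\dim H^2(T,\Ad\circ\rho)=\dim\Hom_G(\frakg,K)$; and your Euler-characteristic inequality, even if it could be salvaged, would still leave you with a residual list of $(G,p)$ for which you offer no argument beyond ``treated separately''. The paper does not attempt to extend the deformation-theoretic method at all: it simply quotes Theorem~\ref{t:conmainllm} and then uses the classification of rigid triples for adjoint groups (Table~\ref{ta:rigida}) to reduce to an explicit finite list of shapes $(G,(a,b,c))$---types $A_1$ through $A_4$, $C_2$ with $(a,b)\in\{(2,3),(3,3)\}$, and $G_2$ with $(a,b,c)\in\{(2,4,5),(2,5,5)\}$---and attacks each directly.

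Second, and more seriously, your fallback for the ``hard part'' $p\mid abc$ is to invoke linear rigidity in ${\rm GL}_n$ and, where that fails, ``explicit, computer-assisted computation''. The paper does use linear rigidity for type $A$ and for $C_2$ with $(a,b)=(2,3)$ and for $G_2$; but for $G={\rm PSp}_4(K)$ with $(a,b)=(3,3)$ the triple $(3,3,c)$ is ${\rm PSp}_4(K)$-\emph{nonrigid} for ${\rm PSL}_4(K)$ (this is stated explicitly at the start of \S\ref{s:sp433}), so Lemmas~\ref{l:almostred} and \ref{l:almostrig} are unavailable. Nor can a finite computation settle it: there are infinitely many values of $c$, and for each $c$ infinitely many $r$ to rule out. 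Your proposal contains no mechanism for bounding $r$ here. The paper's actual argument is of a completely different character: for $g_1,g_2\in{\rm Sp}_4(\F)$ of order $3$ generating an absolutely irreducible subgroup, one shows via trace identities (derived from Procesi's theorem and a case analysis of Jordan forms) that the character field of $\langle g_1,g_2\rangle$ is generated by $t_{12}={\rm tr}(g_1g_2)$, $c_{12}=\chi_2(g_1g_2)$, and $t_{1-2}={\rm tr}(g_1g_2^{-1})$, subject to an explicit cubic relation; the further condition $(g_1g_2)^c=1$ forces $t_{12}$ and $c_{12}$ to satisfy fixed integer polynomials depending only on $c$, so the character field is a quotient of a fixed finite $\F_p$-algebra and hence bounded in size. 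Since ${\rm cf}({\rm Sp}_4(p^r))=\F_{p^r}$, this bounds $r$. This is the genuine content of the theorem, and nothing in your outline approaches it.
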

  
  The first ingredient in the proof of Theorem \ref{t:marionconjsimple} is the classification of rigid triples $(a,b,c)$ of integers for simple algebraic groups. This is established in \cite{MarionLawther} and we recall the result in \S\ref{s:pre}.  In particular, if $G$ is of  adjoint type and $(a,b,c)$ is rigid for $G$ then  $G$ is of type ${\rm A}_\ell$ (with $1\leq \ell\leq 4$), $C_2$ or $G_2$. More precisely, the rigid triples $(a,b,c)$ of integers for simple algebraic groups $G$ of adjoint type are as in Table \ref{ta:rigida} below.

  \begin{table}[h!]
\center{
\begin{tabular}{|l|l|l|}
\hline
$G$ & $p$ & $(a,b,c)$\\
\hline
${\rm PSL}_2(K)$ & any & $(a,b,c)$\\
${\rm PSL}_3(K)$ & any  & $(2,b,c)$\\
${\rm PSL}_4(K)$ & any  & $(2,3,c)$\\
${\rm PSL}_5(K)$ & any  & $(2,3,c)$\\
\hline
${\rm PSp}_4(K)$ & any & $(2,3,c)$, $(3,3,c)$\\
\hline
$G_{2}(K)$ & any  & $(2,4,5)$, (2,5,5)\\
\hline
\end{tabular}}
\caption{Rigid triples for simple algebraic groups  of adjoint type}\label{ta:rigida}
\end{table}

Recall that given a field $\mathbb{F}$ and a subgroup $H$ of ${\rm GL}_n(\mathbb{F})$, the character field of $H$ is the subfield of $\mathbb{F}$ generated by the traces of the elements of $H$. If $G$  is of type $C_2$ and $(a,b)=(3,3)$, our approach is  to   determine some generators (as well as some relations between these generators) for the character fields of the subgroups of  ${\rm Sp}_4(K)$ generated by two elements of order 3 having product of order dividing a given positive integer $c$ and acting absolutely irreducibly on the natural module for ${\rm Sp}_4(K)$.  The characterisation we obtain for these character fields implies that up to isomorphism there are only finitely many such fields.  In particular, given a prime number $p$ and a positive integer $c$, there are only finitely many positive integers $r$ such that ${\rm Sp}_4(p^r)$ (respectively, ${\rm PSp}_4(p^r)$) is a $(3,3,c)$-group. \\
 The techniques  involve linear algebra, commutative algebra and the use of Gr\"{o}bner bases.  \\
  The case where $G$ is of type $A$ can be dealt with using the concept of linear rigidity originally introduced in \cite{Katz}.  The argument we give for $G$ of type $A$ is almost identical to the one given in \cite[Lemma 3.2]{Marionconj} for a triple $(a,b,c)$ of primes.\\
 In the remaining cases, namely where $G$ is of type $C_2$ and $(a,b)=(2,3)$ or $G$ is of type $G_2$ and $(a,c)=(2,5)$, we embed $G$ in a simple algebraic group $H$ of type $A$ and show that either an $(a,b,c)$-subgroup of $G$ acts reducibly on the natural module for $H$ and so $G(q)$ is never an $(a,b,c)$-group, or use linear rigidity to conclude. \\
  
More  generally, considering also non-adjoint groups, we obtain the following result. 

\begin{theorem}\label{t:cormain} Conjecture \ref{c:marionconj} holds except possibly if $G$, $p$ and $(a,b,c)$ are as in Table \ref{ta:marionconjex} below. 
\end{theorem}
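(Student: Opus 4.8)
The plan is to combine the deformation-theoretic result of Larsen, Lubotzky and the third author (Theorem~\ref{t:conmainllm}) with the classification, carried out in \cite{MarionLawther}, of rigid triples of integers for simple algebraic groups of \emph{all} isogeny types, and then to clear the residual cases with the techniques developed for Theorem~\ref{t:marionconjsimple}. By Theorem~\ref{t:conmainllm}, Conjecture~\ref{c:marionconj} holds whenever $p\nmid abcd$, where $d$ is the determinant of the Cartan matrix of $G$; so it suffices to treat, for each simple algebraic group $G$ admitting a rigid triple, those rigid triples $(a,b,c)$ and primes $p$ with $p\mid abcd$. The classification of \cite{MarionLawther} makes this list of types and triples explicit, and for each type only finitely many primes divide $d$.

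First I would dispose of the cases that reduce directly to Theorem~\ref{t:marionconjsimple}. If a quasisimple group $G(q)=(G^F)'$ is $(a,b,c)$-generated, then so is its image in the finite adjoint group of the same type, namely the simple group of type $G$ over the same field: it is generated by two elements of orders dividing $a$ and $b$ whose product has order dividing $c$. Hence, whenever $(a,b,c)$ is rigid not only for $G$ but also for the adjoint group of the same type, Theorem~\ref{t:marionconjsimple} applied to the latter yields the finiteness at once. Since the central isogeny from the simply connected cover can only enlarge the subvarieties $G_{[a]}$, $G_{[b]}$, $G_{[c]}$, this covers every rigid triple for $G$ whose rigidity persists in the adjoint form; in particular it recovers Theorem~\ref{t:marionconjsimple}, and shows that the triples of Table~\ref{ta:rigida} contribute nothing to Table~\ref{ta:marionconjex}.

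It remains to treat the rigid triples $(a,b,c)$ for non-adjoint $G$ for which the adjoint group of the same type is \emph{non-rigid} (so that Theorem~\ref{t:marionconjsimple} says nothing directly), together with those cases in which $p\mid d$. When $p\nmid d$, the cohomological mechanism behind Theorem~\ref{t:conmainllm} still reduces matters to the triples with $p\mid abc$; these I would attack exactly as in the proof of Theorem~\ref{t:marionconjsimple}---by embedding the natural module of $G$ into a general linear group and either showing that an $(a,b,c)$-subgroup acts reducibly, so that $G(q)$ is never an $(a,b,c)$-group, or invoking linear rigidity, and, for the classical cases of shape $(3,3,c)$, through the explicit description of the relevant character fields produced by the Gr\"obner basis method. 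The main obstacle is the cases with $p\mid d$: there the isogeny $G_{\mathrm{sc}}\to G_{\mathrm{ad}}$ is inseparable, so one can neither pass between isogeny types as in the reduction above nor appeal to the vanishing $\dim H^1(T,\mathrm{Ad}\circ\rho)=0$, and the linear-rigidity computations tend to degenerate in the small characteristics at issue. Those $p\mid d$ cases that yield to direct computation are subsumed into the analysis above; the ones that resist all of these methods are precisely the entries of Table~\ref{ta:marionconjex}.
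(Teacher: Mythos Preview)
Your overall architecture—combine Theorem~\ref{t:conmainllm} with the classification in \cite{MarionLawther}, then mop up the remaining primes with the tools from Theorem~\ref{t:marionconjsimple}—is the paper's, but you have misread where the obstruction lies, and this creates a real gap. Every entry of Table~\ref{ta:marionconjex} is of type $C$, so $d=2$, and every entry has $p\neq 2$; thus $p\nmid d$ throughout the table. The open cases are not the inseparable-isogeny cases $p\mid d$ you single out; they are precisely the rigid triples for ${\rm Sp}_{2\ell}(K)$ with $p\neq 2$ and $p\mid abc$, minus the single family $({\rm Sp}_4,(3,3,c))$. Your assertion that all $p\mid abc$, $p\nmid d$ cases can be settled ``exactly as in the proof of Theorem~\ref{t:marionconjsimple}'' is where the argument breaks: linear rigidity only fully resolves type $A$ (this is Proposition~\ref{p:marionconjtypea}); for ${\rm Sp}_{2\ell}$ the triples at issue are ${\rm Sp}_{2\ell}(K)$-nonrigid for the ambient ${\rm SL}_{2\ell}(K)$ (see the remark opening \S\ref{s:sp433}), and the character-field method has only been carried out for $({\rm Sp}_4,(3,3,c))$. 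So these methods do not eliminate the entries of Table~\ref{ta:marionconjex}; those entries are exactly what remains after applying them.

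For comparison, the paper's reduction (Corollary~\ref{c:s2c2}) does not pass to the adjoint form but shows directly that Theorem~\ref{t:cormain} follows from Theorem~\ref{t:marionconjsimple}, the conjecture for all non-adjoint groups of type $A$ (Proposition~\ref{p:marionconjtypea}, via linear rigidity), and the conjecture for $({\rm Sp}_4,(3,3,c))$ with $p\neq 2$ (\S\ref{s:sp433}). The only rigid instances in types $B$ and $D$—${\rm Spin}_{11}$, ${\rm Spin}_{12}$, ${\rm HSpin}_{12}$ with $(2,3,7)$—are dispatched by the citation to \cite[Proposition~3.2]{Marionconj} (never Hurwitz). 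And the $p\mid d$ cases for type $C$ (i.e.\ $p=2$) cause no trouble at all: for $\ell\geq 3$ no triple is rigid for ${\rm Sp}_{2\ell}(K)$ in characteristic $2$, and ${\rm Sp}_4(K)\cong{\rm PSp}_4(K)$ when $p=2$, so Theorem~\ref{t:marionconjsimple} applies directly.
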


\begin{table}
\begin{center}
\begin{tabular}{|l|l|l|}
\hline
$G$& $(a,b,c)$ & $p$ \\
\hline
${\rm Sp}_4(K)$ & $(2,b,c)$ $b\geq 5$ & $p\neq 2$ and $p\mid bc$\\ 
& $(3,4,c)$ $c\geq 5$ & $p\neq 2$, and $p=3$ or $p \mid c$\\
& $(4,4,c)$ $c\geq 5$ & $p\neq 2$  and $p\mid c$\\
\hline
${\rm Sp}_6(K)$ & $(2,5,c)$ $c\geq7$ & $p\neq 2$, and $p=5$ or $p \mid c$\\
&  $(2,6,c)$ $c\geq7$ & $p\neq 2$, and $p=3$ or $p \mid c$\\
& $(3,3,4)$ & $p=3$\\
& $(3,4,4)$ & $p=3$\\
\hline
${\rm Sp}_8(K)$ & $(2,3,c)$ $c\geq9$ & $p\neq 2$, and $p=3$ or $p \mid c$\\
&  $(2,4,7)$  & $p=7$\\
& $(2,5,5)$ & $p=5$\\
& $(2,5,6)$ & $p=3$ or $p=5$\\
& $(2,6,6)$ & $p=3$\\
\hline
${\rm Sp}_{10}(K)$ & $(2,3,c)$ $c\geq11$ & $p\neq 2$, and $p=3$ or $p \mid c$\\
&  $(2,4,7)$  & $p=7$\\
\hline
${\rm Sp}_{2\ell}(K)$, $\ell\in\{6,7\}$ & $(2,3,9)$ & $p=3$\\
& $(2,3,10)$ & $p=3$ or $p=5$\\
&  $(2,4,6)$  & $p=3$\\
\hline
${\rm Sp}_{2\ell}(K)$, $\ell\in\{8,9\}$ & $(2,3,8)$ & $p=3$\\
& $(2,4,5)$ & $p=5$\\
\hline
${\rm Sp}_{2\ell}(K)$, $\ell\in\{10,12,13\}$ & $(2,3,7)$ & $p=3$ or $p=7$\\
\hline
\end{tabular}
\end{center}
\caption{The open cases for Conjecture \ref{c:marionconj}}\label{ta:marionconjex}
\end{table}


\newpage

Once we have Theorem \ref{t:marionconjsimple} to hand, Theorem \ref{t:cormain} follows essentially from the classification determined in \cite{MarionLawther} of rigid triples of integers for non-adjoint simple algebraic groups and the fact that Conjecture \ref{c:marionconj} holds generally for $G$ non-adjoint  of types $A$, $B$ and $D$.  \\

Theorems \ref{t:marionconjsimple} and \ref{t:cormain} consider the finite quasisimple quotients  $(G^F)'=G(p^r)$ of Lie type of a given hyperbolic triangle group $T=T_{a,b,c}$ for which $(a,b,c)$ is  rigid for $G$. 
Before describing the outline of the paper, we give some recent complementary results on $(a,b,c)$-generation of finite groups of Lie type that essentially correspond to the cases where $(a,b,c)$ is not rigid for $G$.\\
  If $(a,b,c)$ is reducible for $G$, the third author proved the following non-existence result:
  
 \begin{propn}\label{p:marionred}\cite[Proposition 10]{MarionLawther}.
 Let $G$ be a simple algebraic group defined over an algebraically closed field $K$ of prime characteristic $p$ and let $(a,b,c)$ be a reducible triple of integers for $G$.  Then a quasisimple group $G(p^r)$ (of the form $(G^F)'$ where $F$ is a Steinberg endomorphism) is never an $(a,b,c)$-group.
\end{propn}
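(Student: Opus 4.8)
The plan is to argue by contradiction through a dimension count on the representation variety $\Hom(T_{a,b,c},G)$. Suppose that for some $r$ the quasisimple group $G(p^r)=(G^F)'$ is $(a,b,c)$-generated, say $G(p^r)=\langle x,y\rangle$ with $x\in G_{[a]}$, $y\in G_{[b]}$ and $z:=(xy)^{-1}\in G_{[c]}$, and regard $\rho:=(x,y,z)$ as a point of
\[
\mathcal{V}:=\{(x',y',z')\in G_{[a]}\times G_{[b]}\times G_{[c]}: x'y'z'=1\}\ \cong\ \Hom(T_{a,b,c},G).
\]
Since $G(p^r)$ is Zariski dense in $G$ (a standard property of finite quasisimple groups of Lie type), its centraliser in $G$ equals $C_G(G)=Z(G)$, which is finite. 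Hence the $G$-orbit of $\rho$ under simultaneous conjugation has dimension $\dim G$, and so every irreducible component of $\mathcal{V}$ through $\rho$ has dimension at least $\dim G$; in particular $\dim_{\rho}\mathcal{V}\ge\dim G$.

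The crux is the complementary inequality $\dim_{\rho}\mathcal{V}\le j_a(G)+j_b(G)+j_c(G)-\dim G$; combined with the previous bound it forces $j_a(G)+j_b(G)+j_c(G)\ge 2\dim G$, contradicting the hypothesis that $(a,b,c)$ is reducible for $G$. To prove it, one replaces $G_{[a]},G_{[b]},G_{[c]}$ by the conjugacy classes $C_1,C_2,C_3$ of $x,y,z$ (so $\dim C_i\le j_a(G),j_b(G),j_c(G)$ respectively), and bounds the Zariski tangent space: in any characteristic $T_{\rho}\mathcal{V}$ is the space $Z^1(T_{a,b,c},\Ad\circ\rho)$ of $1$-cocycles, cut out inside $\mathfrak{g}^3$ by $N_x c_x=N_y c_y=N_z c_z=0$ together with $c_x+\Ad(x)c_y+\Ad(xy)c_z=0$, where $N_u=\sum_{0\le i<\mathrm{ord}(u)}\Ad(u)^i$. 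From $(1-\Ad(u))N_u=1-\Ad(u)^{\mathrm{ord}(u)}=0$ and the semisimplicity of elements of order prime to $p$ one gets $\dim\ker N_u=\dim C_u$ whenever $p\nmid\mathrm{ord}(u)$; and the last equation imposes $\dim G$ independent conditions, since the subspaces $(1-\Ad(x))\mathfrak{g}$, $(1-\Ad(y))\mathfrak{g}$, $(1-\Ad(z))\mathfrak{g}$ span $\mathfrak{g}$ — their common annihilator for the Killing form is $\mathfrak{g}^{\langle x,y,z\rangle}=\mathfrak{g}^{G}$, which is the Lie algebra of $Z(G)$ and hence zero away from a few small characteristics. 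Putting these together yields $\dim Z^1\le\dim C_1+\dim C_2+\dim C_3-\dim G$, as required.

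I expect the genuine obstacle to be the behaviour in characteristic $p$ when $p\mid abc$. There the identity $\dim\ker N_u=\dim C_u$ fails: if $u$ has a nontrivial unipotent part then $N_u$ can have a much larger kernel (for instance $N_u=(\Ad(u)-1)^{p-1}$ when $\mathrm{ord}(u)=p$), so $\mathcal{V}$ may have dimension well above $j_a(G)+j_b(G)+j_c(G)-\dim G$ and the naive count collapses — this is precisely the phenomenon that forces the hypothesis $p\nmid abcd$ in \cite[Theorem 1.7]{LLM1}, and likewise the Killing form degenerates and $\mathfrak{g}$ may acquire a centre when $p$ is bad for $G$ or divides the determinant of the Cartan matrix. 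One therefore cannot conclude uniformly, and instead invokes the classification, carried out in \cite{MarionLawther}, of the hyperbolic triples $(a,b,c)$ that are reducible for a given simple algebraic group: that list is short, because reducibility forces $a,b,c$ to be small, and in each surviving case one checks directly that $G_{[a]}$ (or $G_{[b]}$) consists only of elements contained in a common proper subgroup of $G$, so that no pair generating a Zariski-dense subgroup — in particular no pair generating a quasisimple $G(p^r)$ — can exist. Combining the characteristic-free estimate with this finite case analysis completes the proof.
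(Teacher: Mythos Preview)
The paper does not give its own proof of this proposition: it is merely quoted from \cite[Proposition~10]{MarionLawther}. So there is no in-paper argument to compare against, and your attempt must stand on its own.

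Your deformation-theoretic count is sound when $p\nmid abc$ and the Killing form is nondegenerate, and you correctly flag that it breaks down otherwise. The genuine gap is in your proposed rescue. The claim that ``in each surviving case $G_{[a]}$ (or $G_{[b]}$) consists only of elements contained in a common proper subgroup of $G$'' is simply false. Take $G=\mathrm{Sp}_4(K)$ with $p\neq 2$ and the reducible triple $(3,3,4)$ from Table~\ref{ta:red}: elements of order $3$ fill several non-central conjugacy classes, and there is no single proper closed subgroup containing them all. So your case analysis does not get off the ground, and the ``combining'' sentence at the end is not a proof.

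There is, however, a uniform argument that avoids the $p\mid abc$ obstruction entirely. Apply L.~Scott's inequality to the adjoint module $\mathfrak{g}$: for $g_1g_2g_3=1$ generating a Zariski-dense subgroup one has
\[
\sum_{i=1}^{3}\dim(1-\Ad g_i)\mathfrak{g}\ \ge\ \dim\bigl(\mathfrak{g}/\mathfrak{g}^{G}\bigr)+\dim[G,\mathfrak{g}].
\]
Every entry in Table~\ref{ta:red} has $G\in\{\mathrm{SL}_2,\mathrm{Sp}_{2\ell}\}$ with $p\neq 2$; here $\mathfrak{g}$ is a simple $G$-module, so the right-hand side equals $2\dim G$, and centralisers in $G$ are smooth, so $\dim(1-\Ad g_i)\mathfrak{g}=\dim g_i^{G}\le j_{n_i}(G)$. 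This yields $\sum j_{n_i}(G)\ge 2\dim G$, contradicting reducibility --- with no hypothesis on $p\mid abc$. Replacing your tangent-space estimate by this inequality closes the gap.
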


 \begin{table}[h]
\center{
\begin{tabular}{|l|l|l|}
\hline
$G$ & $p$ & $(a,b,c)$\\
\hline
${\rm SL}_2(K)$ & $p \neq 2$  & $(2,b,c)$\\
\hline
${\rm Sp}_4(K)$ & $p \neq 2$  & $(2,3,c)$, $(2,4,c)$, $(3,3,4)$, $(3,4,4)$, $(4,4,4)$\\
${\rm Sp}_6(K)$ & $p \neq 2$  & $(2,3,c)$, $(2,4,c)$, $(2,5,5)$, $(2,5,6)$, $(2,6,6)$\\
${\rm Sp}_8(K)$ & $p \neq 2$  & $(2,3,7)$, $(2,3,8)$, $(2,4,5)$, $(2,4,6)$\\
${\rm Sp}_{10}(K)$ & $p \neq 2$  & $(2,3,7)$, $(2,3,8)$, $(2,3,9)$, $(2,3,10)$, $(2,4,5)$, $(2,4,6)$\\
${\rm Sp}_{12}(K)$ & $p \neq 2$  & $(2,3,7)$, $(2,3,8)$, $(2,4,5)$\\
${\rm Sp}_{14}(K)$ & $p \neq 2$  & $(2,3,7)$, $(2,3,8)$, $(2,4,5)$\\
${\rm Sp}_{16}(K)$ & $p \neq 2$  & $(2,3,7)$\\
${\rm Sp}_{18}(K)$ & $p \neq 2$  & $(2,3,7)$\\
${\rm Sp}_{22}(K)$ & $p \neq 2$  & $(2,3,7)$\\
\hline
\end{tabular}
}
\caption{Reducible triples for simple algebraic groups of simply connected or adjoint type}\label{ta:red}
\end{table}

In \cite{MarionLawther} the third author classified the reducible triples $(a,b,c)$ of integers for simple algebraic groups $G$, see Table \ref{ta:red} below. In particular if $(a,b,c)$ is reducible for $G$ then $G$ is abstractly isomorphic to a simple algebraic group of simply connected type. Combining the above result with Table \ref{ta:red}, we obtain:

\begin{propn}
If $(G,p,(a,b,c))$ is as in  Table \ref{ta:red}, then $G(p^r)$ is never an $(a,b,c)$-group. 
\end{propn}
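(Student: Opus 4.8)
The plan is simply to combine the non-existence result of Proposition \ref{p:marionred} with the classification of reducible triples recorded in Table \ref{ta:red}, both of which come from \cite{MarionLawther}.

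First I would note that, by the classification in \cite{MarionLawther}, Table \ref{ta:red} lists \emph{exactly} the reducible triples $(a,b,c)$ of integers for simple algebraic groups $G$ of simply connected or adjoint type; in particular, for every row $(G,p,(a,b,c))$ of that table the triple $(a,b,c)$ is reducible for $G$, i.e. $j_a(G)+j_b(G)+j_c(G) < 2\dim G$. Unwinding what this amounts to, for each of the finitely many families in the table (and each relevant residue of $p$) one computes the dimensions $j_u(G) = \dim G_{[u]}$ of the subvarieties of elements of order dividing $u$ in the relevant symplectic group, or in ${\rm SL}_2(K)$, and checks that the three dimensions sum to strictly less than $2\dim G$; I would cite these computations from \cite{MarionLawther} rather than reproduce them.

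Granting reducibility, the conclusion is immediate. For each row of Table \ref{ta:red} the triple $(a,b,c)$ is reducible for $G$, so Proposition \ref{p:marionred} applies and yields that no quasisimple group of the form $(G^F)'$, with $F$ a Steinberg endomorphism of $G$, is $(a,b,c)$-generated; that is, $G(p^r)$ is never an $(a,b,c)$-group. Running through all the rows gives the statement.

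Since both ingredients are already in hand, I do not expect a genuine obstacle beyond bookkeeping; the substantive work — the dimension count establishing reducibility, together with the proof of the non-existence statement of Proposition \ref{p:marionred} — lies in \cite{MarionLawther}. The only point worth a sentence of care is the isogeny type: Proposition \ref{p:marionred} is phrased for an arbitrary simple algebraic group together with a reducible triple for it, and every $G$ occurring in Table \ref{ta:red} is of simply connected type (consistent with the fact, noted just after Proposition \ref{p:marionred}, that a reducible triple forces $G$ to be abstractly isomorphic to a group of simply connected type). Hence the proposition applies verbatim to each row, and in particular no separate treatment of the finitely many small values of $p^r$ is required.
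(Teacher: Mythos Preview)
Your proposal is correct and matches the paper's approach exactly: the proposition is obtained by combining Proposition~\ref{p:marionred} with the classification of reducible triples recorded in Table~\ref{ta:red}. The paper does not give a separate proof block at all, simply stating that the result follows by combining these two ingredients.
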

  
  We now make some observations on the nonrigid case.  Let us note that the converse of Conjecture \ref{c:marionconj} is not true in the sense that we can have a simple algebraic group $G$ defined over an algebraically closed field $K$ of prime characteristic $p$ and a nonrigid triple $(a,b,c)$ of integers  for $G$ such that there are only finitely many positive integers $r$ such that $G(p^r)$ is an $(a,b,c)$-group. For example by \cite{Tamburini} ${\rm SL}_7(p^r)$ is never a $(2,3,7)$-group although the triple $(2,3,7)$ is nonrigid for ${\rm SL}_7(K)$. However  Larsen, Lubotzky and the third author have  investigated through deformation theory the case where $G(p^r)$ is a finite untwisted simple group of Lie type and $(a,b,c)$ is a nonrigid hyperbolic triple of integers for $G$. To state the main result in this direction we introduce some notation and recall the notion for a hyperbolic triangle group $T$ to be saturated with finite untwisted simple quotients of type $X$. Let $X$ be a simple Dynkin diagram, $X(\mathbb{C})$ be the simple adjoint algebraic group over $\mathbb{C}$ and $X(p^r)$ denote the untwisted finite simple group of type $X$ over $\mathbb{F}_{p^r}$. A hyperbolic triangle group $T$ is said to be saturated with finite quotients of type $X$ if there exist integers $p_0$ and $e$ such that for all primes $p>p_0$, $X(p^{e\ell})$ is a quotient of $T$ for every $\ell \in \mathbb{N}$.    
The motivation for this notion stems from the property, proved in \cite{LLM2}, that a hyperbolic triangle group  $T$ is saturated with finite quotients of a given (Lie) type $X$ if and only if there exists a representation $\rho \in {\rm Hom}(T,X(\mathbb{C}))$ such that  ${\rm Im}(\rho)$ is Zariski-dense and $\dim H^1(T,{\rm Ad}\circ \rho)>0$ (where ${\rm Ad}: X(\mathbb{C})\rightarrow {\rm Aut}({\rm Lie}(X(\mathbb{C})))$ is the adjoint representation of $X(\mathbb{C})$).
  
  \begin{theorem}  \cite[Theorem 1.1]{LLM2}.\label{t:llm1}
  A hyperbolic triangle group $T_{a,b,c}$ is saturated with finite quotients of type $X$ except possibly if $(T,X)$ appears in \cite[Tables 1 or 2]{LLM2}. 
  \end{theorem}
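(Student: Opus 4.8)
The plan is to build on the cohomological characterisation recalled just above the statement: $T=T_{a,b,c}$ is saturated with finite quotients of type $X$ if and only if there is a $\rho\in{\rm Hom}(T,X(\C))$ with Zariski-dense image and $\dim H^1(T,{\rm Ad}\circ\rho)>0$. So the whole problem reduces to deciding, for each pair $(T,X)$, whether such a $\rho$ exists, and the first step is to make the condition $\dim H^1>0$ combinatorial. Since we work over $\C$, the characteristic never divides $abc$, so Weil's formula \cite{Weil} applies: writing $\frakg={\rm Lie}(X(\C))$ and using that Zariski-density forces $H^0(T,\frakg)=\frakg^{\rho(T)}=0$ (whence also $H^2=0$ by Poincar\'e duality via the Killing form), one computes $\dim Z^1=2\dim X-\sum_{u\in\{a,b,c\}}\dim C_\frakg(\rho(x_u))$ and $\dim B^1=\dim X$, so that
$$\dim H^1(T,{\rm Ad}\circ\rho)=\dim X-\bigl(\dim C_\frakg(\rho(x))+\dim C_\frakg(\rho(y))+\dim C_\frakg(\rho(z))\bigr).$$
Thus saturation needs a Zariski-dense triple of elements of orders dividing $(a,b,c)$ whose centraliser dimensions sum to less than $\dim X$. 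Since the smallest centraliser of an element of order dividing $u$ has dimension $\dim X-j_u(X)$, the best attainable value is $\dim H^1=j_a(X)+j_b(X)+j_c(X)-2\dim X$, positive precisely when $(a,b,c)$ is nonrigid for $X$. Hence a necessary condition is nonrigidity, and the classification of reducible/rigid/nonrigid triples (\cite{MarionLawther}, recalled in \S\ref{s:pre}) eliminates an explicit short list of $(T,X)$; these account for part of \cite[Tables 1--2]{LLM2}.

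For the remaining, nonrigid pairs $(T,X)$ one must actually produce a Zariski-dense $\rho$. The construction is to choose conjugacy classes $C_a,C_b,C_c$ of elements of orders dividing $a,b,c$ that are as large as the arithmetic permits (so the cohomology bound above is $>0$), to form the fibre $\Phi=\{(x_1,x_2,x_3)\in C_a\times C_b\times C_c: x_1x_2x_3=1\}$, and to show that $\Phi$ is non-empty of the expected dimension $\dim C_a+\dim C_b+\dim C_c-\dim X$ and that a generic point generates a Zariski-dense subgroup. Non-emptiness and the dimension count follow from Scott-type inequalities together with explicit constructions in the small-rank cases. Zariski-density is the crux: one runs through the maximal positive-dimensional subgroups $M<X(\C)$ (using the Liebeck--Seitz classification, supplemented by explicit lists for the small classical and exceptional groups), bounds $\dim\{(x_1,x_2,x_3)\in(C_a\cap M)\times(C_b\cap M)\times(C_c\cap M):x_1x_2x_3=1\}$ by the same Weil/Scott estimate applied inside $M$, and checks it is strictly smaller than $\dim\Phi$ unless $(T,X)$ is one of the listed exceptions. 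When no proper $M$ captures the generic triple, $\rho$ is Zariski-dense and $\dim H^1>0$, and the characterisation yields saturation.

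The main obstacle is exactly this density analysis. The genuine exceptions in \cite[Tables 1--2]{LLM2} occur because for those $(T,X)$ every triple, for every admissible class choice, is forced into a proper positive-dimensional subgroup — a near-defining embedding such as $G_2<{\rm SL}_7$ or ${\rm Sp}_{2\ell}<{\rm SL}_{2\ell}$, or a subsystem subgroup of the right size — and separating these from the saturated cases requires a careful case-by-case comparison of fibre dimensions across the maximal-subgroup lattice, with a handful of small-rank pairs decided by explicit computation in the representation variety. Secondary points to settle along the way are that the cohomology count really is controlled by the largest available classes, so that no density-producing choice among non-maximal classes is overlooked, and that at the constructed $\rho$ the character variety is smooth of dimension $\dim H^1$ (immediate since $H^2=0$), so that the positive-dimensional family of deformations needed for the strong-approximation step underlying the characterisation is genuinely present.
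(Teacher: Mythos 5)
A preliminary remark on the comparison you were asked for: this statement is not proved in the paper at all. It is Theorem 1.1 of \cite{LLM2}, quoted verbatim in the introduction for context, and nothing in the body of the paper depends on its proof. So there is no internal argument to measure your proposal against; it can only be judged as a reconstruction of the argument of \cite{LLM2}.

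As such a reconstruction, your first half is fine: over $\C$ Weil's machinery applies, Zariski-density forces $H^0(T,\frakg)=0$ and hence $H^2=0$, giving $\dim H^1(T,\Ad\circ\rho)=\dim X-\sum_{u\in\{a,b,c\}}\dim C_{\frakg}(\rho(x_u))\leq j_a(X)+j_b(X)+j_c(X)-2\dim X$, so via the quoted characterisation nonrigidity is necessary and the rigid pairs (Table 2 of \cite{LLM2}) are accounted for; this matches the discussion in the paper and the actual route. The genuine gap is in the second half, which is where the theorem lives: for each nonrigid pair not in Table 1 you must actually exhibit a Zariski-dense $\rho$ with $\dim H^1>0$, and your outline replaces this with a description of what would have to be checked. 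Non-emptiness of the fibre $\Phi$ does not "follow from Scott-type inequalities" --- Scott's inequality is a necessary constraint on an irreducible triple, not an existence theorem; producing triples with product $1$ in prescribed classes of $X(\C)$ is a Deligne--Simpson-type problem and is a substantial piece of work in its own right. Worse, your density step concludes "unless $(T,X)$ is one of the listed exceptions", which is circular: the tables are precisely the output of the maximal-subgroup dimension comparison, so they cannot be invoked as a hypothesis while proving the statement. For what it is worth, the published proof also rests on dimension comparisons in $\Hom(T,X(\C))$ against the loci of representations landing in proper positive-dimensional subgroups (via the Liebeck--Seitz classification), but it starts from explicitly constructed representations (e.g. ones factoring through small subgroups such as a principal $\mathrm{PSL}_2$, where $H^1$ can be computed directly) and deforms them, rather than from generic points of class fibres; in either formulation the decisive case-by-case analysis is the proof, and in your proposal it is named rather than carried out.
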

   The cases appearing in \cite[Table 2]{LLM2} are in fact  true exceptions to the theorem as they correspond to the rigid cases. It is believed that most of the other  possible exceptions appearing in \cite[Table 1]{LLM2} are  not exceptions to the theorem. It   would be interesting to further investigate this theorem  and  for example eliminate some possible exceptions to the theorem appearing in \cite[Table 1]{LLM2}. \\


The outline of the paper is as follows. In \S\ref{s:pre} we present some preliminary results needed in the proof of Theorem \ref{t:marionconjsimple}. We recall in particular  from \cite{LLM1} that given  a simple algebraic group $G$ and a rigid triple $(a,b,c)$ of positive integers for $G$, Conjecture \ref{c:marionconj}  holds except for finitely many explicit prime characteristics (see Theorem \ref{t:conmainllm}). We also recall  the classification of rigid triples of integers for simple algebraic groups  \cite{MarionLawther} (see Theorem \ref{t:classification} below). This is the most important ingredient in this paper. We also obtain in \S\ref{s:pre} two reduction results to Theorems \ref{t:marionconjsimple} and \ref{t:cormain}. 
  In \S\ref{s:concepts} we prove that Theorems \ref{t:marionconjsimple} and \ref{t:cormain} hold provided that Conjecture \ref{c:marionconj} holds for $G$ of type $C_2$ and $(a,b)=(3,3)$.  Note that to handle the relevant cases where $G$ is of type $A$ we essentially repeat a couple of arguments  from \cite{Marionconj}. 
   In \S\ref{s:sp433} we prove that Conjecture \ref{c:marionconj} holds for $G$ of type $C_2$ and $(a,b)=(3,3)$, completing the proofs of Theorems \ref{t:marionconjsimple} and \ref{t:cormain}. 
  \\

\noindent \textbf{Acknowledgements.}  The third author thanks Nikolay Nikolov for his insight on how to tackle the conjecture at its early stage when it was first proposed.  The third author also thanks the  MARIE CURIE and PISCOPIA  research fellowship grant scheme at the University of Padova for support. The research leading to these results has received funding from the European Comission, Seventh Framework Programme (FP7/2007-2013) under Grant Agreement 600376.\\
The second author acknowledges support from DFG Sonderforschungsbereich 701  at Bielefeld University. \\
The work presented received  support from the Swiss National Science Foundation under the grant $200021-153543$.\\
The authors are grateful to Martin Liebeck  
for valuable comments.

\section{Rigid triples}\label{s:pre}

In this section we  present some preliminary results needed in the proof of Theorem \ref{t:marionconjsimple}. We denote by $G$ a simple algebraic group defined over an algebraically closed field $K$ of characteristic $p$ (possibly equal to 0).\\  
Recall that 
given a positive integer $u$, we let $G_{[u]}$ be the subvariety of $G$ consisting of elements  of $G$ of order dividing $u$ and set $j_u(G)=\dim G_{[u]}$.  We  also set  $d_u(G)={\rm codim} \ G_{[u]}$.
By \cite[Lemma 3.1]{Lawther} $d_u(G)$  is the minimal dimension of a centralizer in $G$ of an element of $G$ of order dividing $u$ and  so $j_u(G)$ is the maximal dimension of a conjugacy class of $G$ of an element of $G$ of order dividing $u$. \\

 
Let us record the main result towards Conjecture \ref{c:marionconj}. Recall that the determinant of the Cartan matrix of a simple algebraic group $G$ is $\ell+1$, 2, 2, 4, 1, 1, 3, 2 and 1 according respectively as $G$ is of type $A_\ell$, $B_\ell$, $C_\ell$, $D_\ell$, $G_2$, $F_4$, $E_6$, $E_7$ and $E_8$.

 \begin{thm}\label{t:conmainllm}\cite[Theorem 1.7]{LLM1}.
 Let $G$ be a simple algebraic group defined over an algebraically closed field $K$ of prime characteristic $p$. Let $(a,b,c)$ be a rigid triple of integers for $G$ and let $d$ be the determinant of the Cartan matrix of $G$.  If $p\nmid abcd$ then there are only finitely many  quasisimple groups $G(p^r)$ (of the form $(G^F)'$ where $F$ is a Steinberg endomorphism) that are $(a,b,c)$-generated.  
  \end{thm}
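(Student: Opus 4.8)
The statement is Theorem~\ref{t:conmainllm}, quoted from \cite[Theorem 1.7]{LLM1}, so the task is to reconstruct the deformation-theoretic argument sketched in the introduction. Fix a simple algebraic group $G$, a rigid triple $(a,b,c)$, and assume $p\nmid abcd$ where $d$ is the determinant of the Cartan matrix. Suppose $G(p^r)=(G^F)'$ is $(a,b,c)$-generated via a surjection $\rho_0\colon T_{a,b,c}\to G(p^r)$. The plan is to regard $\rho_0$ as a point of the representation variety $\Hom(T,G)$ (where $T=T_{a,b,c}$), show that its $G$-conjugacy orbit is \emph{open} in $\Hom(T,G)$, and then invoke the elementary fact that an algebraic variety can have only finitely many open orbits under an algebraic group action. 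Since distinct conjugacy classes of surjections $T\to G(p^r)$ (and a fortiori non-isomorphic targets $G(p^r)$) give rise to points in distinct $G$-orbits, finiteness of the set of open orbits forces finiteness of the set of $r$ for which $G(p^r)$ is $(a,b,c)$-generated.

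\textbf{Key steps.} First, I would recall Weil's description of the tangent space and obstruction space: for $\rho\in\Hom(T,G)$, the Zariski tangent space to $\Hom(T,G)$ at $\rho$ is $Z^1(T,\Ad\circ\rho)$, the orbit $G\cdot\rho$ has tangent space $B^1(T,\Ad\circ\rho)$, and hence the orbit is open at $\rho$ as soon as $H^1(T,\Ad\circ\rho)=0$ (local rigidity). So everything reduces to the cohomology vanishing $H^1(T,\Ad\circ\rho_0)=0$. Second, because $p\nmid abc$, the orders of the three standard generators $x,y,z$ of $T$ are coprime to $p$, so each acts semisimply on $\mathrm{Lie}(G)$ and Weil's formula applies: writing $\frakg=\mathrm{Lie}(G)$ as a $T$-module via $\Ad\circ\rho_0$, one has
\[
\dim Z^1(T,\frakg) - \dim B^1(T,\frakg) \;=\; (2g-2)\dim\frakg \;+\; \sum_{i=1}^{3}\bigl(\dim\frakg - \dim\frakg^{x_i}\bigr)\;+\;\dim H^0(T,\frakg),
\]
with $g=0$ for a triangle group; here $\dim\frakg-\dim\frakg^{x_i}$ is the dimension of the $G$-class of $\rho_0(x_i)$, which is at most $j_a(G),j_b(G),j_c(G)$ respectively. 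Third — this is where rigidity enters — the hypothesis $j_a(G)+j_b(G)+j_c(G)=2\dim G$ makes the right-hand side equal to $-2\dim\frakg + 2\dim\frakg + \dim H^0 = \dim H^0(T,\frakg)$ when the classes are as large as possible, so $\dim H^1 = \dim H^0 + (\text{correction from the classes being not maximal})$; a short argument shows the relevant inequality forces $\dim H^1(T,\frakg)\le \dim H^0(T,\frakg) = \dim\frakg^{\rho_0(T)}$. Fourth, the hypothesis $p\nmid d$ is used to show this fixed space vanishes: since $\rho_0$ is surjective onto $G(p^r)$, the fixed space $\frakg^{\rho_0(T)}=\frakg^{G(p^r)}$ is the Lie algebra of the scheme-theoretic centralizer of $G(p^r)$ in $G$, and when $p$ does not divide the determinant of the Cartan matrix this centre is \'etale (the adjoint representation of $G(p^r)$ on $\frakg$ has no trivial constituents), so $H^0(T,\frakg)=0$. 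Combining, $H^1(T,\Ad\circ\rho_0)=0$, $\rho_0$ is locally rigid, its orbit is open, and finiteness follows.

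\textbf{Main obstacle.} The delicate point is Step three together with Step four: reconciling Weil's dimension formula with the rigidity equality to actually \emph{derive} $\dim H^1\le\dim H^0$ rather than merely $\dim H^1-\dim H^0\le 0$ up to sign issues, and then genuinely establishing that $\frakg^{G(p^r)}=0$ under $p\nmid d$. The latter is a characteristic-$p$ subtlety — in small characteristic dividing $d$ the Lie algebra $\frakg$ acquires a centre or the adjoint module becomes non-semisimple, which is precisely why that hypothesis cannot be dropped, and handling it cleanly requires knowing the module structure of $\mathrm{Lie}(G)$ for adjoint and simply connected $G$ and relating $H^0(T,\Ad\circ\rho_0)$ to $C_G(G(p^r))^\circ$. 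One must also be slightly careful that the three generator images are \emph{exactly} of orders dividing $a,b,c$ (so their classes lie in $G_{[a]},G_{[b]},G_{[c]}$ and the $j_u$ bounds apply), and that $\Hom(T,G)$, or a suitable component of it, is an honest variety of the expected dimension near $\rho_0$; these are where I expect the real work to be, the rest being the formal open-orbit packaging already outlined in the introduction.
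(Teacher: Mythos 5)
You are asked to prove a statement that the paper itself does not prove but quotes from \cite[Theorem 1.7]{LLM1}, and your reconstruction follows exactly the deformation-theoretic route the introduction sketches: Weil's formula (applicable since $p\nmid abc$), vanishing of $H^1(T,{\rm Ad}\circ\rho)$ using $p\nmid d$, hence local rigidity of $\rho$, openness of its $G$-orbit in ${\rm Hom}(T,G)$, and finiteness of the number of open orbits. One caveat: as written your Weil formula omits the $H^2$ term (for the trivial module it would give a negative $\dim H^1$); the correct consequence of rigidity is $\dim H^1\le \dim H^0+\dim H^2$, but this is harmless since the same fact you invoke for $H^0$ under $p\nmid d$ (the $G(p^r)$-module ${\rm Lie}(G)$ has no trivial submodule, and by self-duality no trivial quotient) also forces $H^2(T,{\rm Ad}\circ\rho)=0$.
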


  In order to prove Theorems \ref{t:marionconjsimple} and \ref{t:cormain}, we also need the classification of  hyperbolic triples $(a,b,c)$ of integers as reducible, rigid or nonrigid for a given simple algebraic group $G$.  This classification is established in \cite[Theorem 9]{MarionLawther}.  For ease of reference we state the result below, with Tables \ref{ta:rigida}  and  \ref{ta:red} recorded in \S\ref{s:introduction} and Table  \ref{ta:rigidsc}  recorded below. Recall that if $G$ is not of simply connected type  nor of adjoint type then either $G$ is abstractly isomorphic to ${\rm SL}_n(K)/C$ where $C\leqs Z({\rm SL}_n(K))$, or $G$ is of type $D_{\ell}$, $p \neq 2$ and $G$ is abstractly isomorphic to ${\rm SO}_{2\ell}(K)$ or a half-spin group ${\rm HSpin}_{2\ell}(K)$ where $\ell$ is even in the latter case. \\
  \begin{thm}\cite[Theorem 9]{MarionLawther}.\label{t:classification}
  The following assertions hold.
  \begin{enumerate}[(i)]
  \item The reducible hyperbolic triples $(a,b,c)$ of integers for simple algebraic groups $G$ of simply connected or adjoint type  are exactly those given in Table \ref{ta:red}. In particular there are no reducible hyperbolic triples of integers for  simple algebraic groups of adjoint type.
  \item The rigid hyperbolic triples of integers $(a,b,c)$ for simple algebraic  groups $G$ of simply connected type are exactly those given in Table \ref{ta:rigidsc}.
    \item The rigid hyperbolic triples of integers $(a,b,c)$ for simple algebraic groups $G$ of adjoint type are exactly those given in Table \ref{ta:rigida}.
  \item The classification of reducible and rigid  hyperbolic triples of integers for ${\rm SO}_n(K)$ is the same as for ${\rm PSO}_n(K)$.
  \item The classification of reducible and rigid hyperbolic triples of integers for ${\rm HSpin}_{2\ell}(K)$ is the same as for ${\rm Spin}_{2\ell}(K)$. 
  \item If $C\leqs Z({\rm SL}_n(K))$ contains an involution then the classification of reducible and rigid hyperbolic triples of integers for ${\rm SL}_n(K)/C$ is the same as for ${\rm PSL}_n(K)$. Otherwise, the classification of reducible and rigid  hyperbolic triples of integers for ${\rm SL}_n(K)/C$ is the same as for   ${\rm SL}_n(K)$.
  \end{enumerate}
  \end{thm}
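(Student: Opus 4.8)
The statement is quoted from \cite[Theorem 9]{MarionLawther}, so I sketch the strategy I would use to establish it. The whole trichotomy is really a problem about dimensions of centralizers. By \cite[Lemma 3.1]{Lawther}, $d_u(G)={\rm codim}\,G_{[u]}$ is the minimal dimension of a centralizer $C_G(g)$ over elements $g\in G$ with $g^u=1$, so $j_u(G)=\dim G-d_u(G)$ is the largest dimension of a conjugacy class of such an element. Hence
\[
j_a(G)+j_b(G)+j_c(G)\;=\;3\dim G-\bigl(d_a(G)+d_b(G)+d_c(G)\bigr),
\]
and $(a,b,c)$ is reducible, rigid or nonrigid for $G$ according as $d_a(G)+d_b(G)+d_c(G)$ is greater than, equal to, or less than $\dim G$. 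Everything thus reduces to evaluating these three numbers and comparing their sum with $\dim G$.

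The first step is to understand the function $u\mapsto d_u(G)$ in general. One always has $d_u(G)\geq{\rm rk}(G)$, with equality precisely when $G$ has a regular element of order dividing $u$. When $p\nmid u$ the $u$-torsion of a maximal torus is $(\Z/u)^{{\rm rk}(G)}$ (subject to the isogeny subtleties below), and a generic element of it avoids every root hyperplane once $u$ exceeds the number of roots of $G$; when $p\mid u$ a regular unipotent element of a suitable $p$-power order is also available. In either case, for fixed $G$ and fixed $a\leq b$ there is a threshold $c_0(G,a,b)$ with $d_c(G)={\rm rk}(G)$ for all $c\geq c_0$, so for such $c$ the classification of $(a,b,c)$ depends only on the sign of $d_a(G)+d_b(G)+{\rm rk}(G)-\dim G$. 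On the other hand $d_2(G)$ is the least dimension of an involution centralizer, and for $b\geq3$ one has the crude lower bound coming from the least centralizer dimension of a semisimple or unipotent element of order $b$; for the classical groups these are the standard block-diagonal centralizers, of dimension of order $\tfrac12\dim G$ and $\tfrac1b\dim G$ respectively, and a short estimate shows $d_a(G)+d_b(G)+{\rm rk}(G)>\dim G$ as soon as ${\rm rk}(G)$ is not small. This confines the rigid and reducible triples to $G$ of type ${\rm A}_\ell$ with $\ell\leq4$, ${\rm C}_2$ or $G_2$, together with the short list of symplectic groups of slightly larger rank recorded in Tables~\ref{ta:red} and \ref{ta:rigidsc}; all other simple $G$ are nonrigid for every hyperbolic triple.

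It then remains to determine $d_u(G)$ exactly for these finitely many groups and the finitely many relevant small values of $u$ (the candidates for $a$ and $b$, and the sub-threshold values of $c$). Here one takes the minimum of $\dim C_G(g)$ over all $g$ with $g^u=1$; any such $g$ factors as $g=su=us$ with $s$ semisimple of order dividing the $p'$-part of $u$ and $u$ unipotent of order dividing the $p$-part. The semisimple contribution is the standard computation with centralizers of semisimple elements (subsystem subgroups), while the unipotent and mixed contributions in characteristics $p\mid u$ require the classification of unipotent classes and their centralizer dimensions in small characteristic (as in \cite{Lawther} and the references given there). Substituting the resulting values into the comparison of $d_a+d_b+d_c$ with $\dim G$ and retaining only the hyperbolic triples $1/a+1/b+1/c<1$ produces exactly the entries of Tables~\ref{ta:rigida}, \ref{ta:red} and \ref{ta:rigidsc}. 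I expect this to be the main obstacle: it is a long, characteristic-sensitive case check with many near-equalities, and the small-$p$ unipotent bookkeeping is delicate.

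Finally, parts (iv)--(vi) concern behaviour under isogeny. For an isogeny $\pi\colon G\to\bar G=G/Z$ with $Z$ finite, the map $x\mapsto[g,x]\in Z$ has finite image, so its fibres are finitely many cosets of $C_G(g)$ and $\dim C_{\bar G}(\pi(g))=\dim C_G(g)$: centralizer dimensions are isogeny-invariant. What changes on passing from $G$ to $\bar G$ is only which elements have order dividing $u$, since $\pi(g)^u=1$ iff $g^u\in Z$; thus $\bar G_{[u]}=\pi(\{g\in G:g^u\in Z\})$, which strictly enlarges $\pi(G_{[u]})$ exactly when some $z\in Z$ acquires a $u$-th root outside $Z\cdot G_{[u]}$. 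For the centres occurring here this is controlled by $\gcd(u,|Z|)$ and by whether $Z$ contains an involution --- the presence of a central involution being precisely what lets the smaller involution-centralizer dimension of the adjoint group be realised (compare the drop in $d_2$ on passing from ${\rm Sp}_4(K)$ to ${\rm PSp}_4(K)$). Recomputing the $d_u$ with these enlarged subvarieties and rerunning the comparison yields (iv)--(vi): for ${\rm SL}_n(K)/C$ the classification matches that of ${\rm PSL}_n(K)$ when $C$ contains an involution and that of ${\rm SL}_n(K)$ otherwise, while for ${\rm SO}_{2\ell}(K)$ and ${\rm HSpin}_{2\ell}(K)$ the extra central factor does not affect the maximal class dimensions that enter the comparison, so their classifications agree with those of ${\rm PSO}_{2\ell}(K)$ and ${\rm Spin}_{2\ell}(K)$ respectively.
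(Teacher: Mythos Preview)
Your overall strategy---reduce to the centraliser codimensions $d_u(G)$, show nonrigidity for large rank, then handle small-rank groups case by case---is the same as the paper's sketch. Two points deserve comment.

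First, the displayed inequality $d_a(G)+d_b(G)+{\rm rk}(G)>\dim G$ is backwards: as written it would say every such triple is \emph{reducible} for large rank, the opposite of the conclusion you draw from it in the next sentence. The correct direction is $<$, and your heuristic $d_u\approx(\dim G)/u$ does then give $d_a+d_b+d_c\lesssim(\tfrac1a+\tfrac1b+\tfrac1c)\dim G<\dim G$ for hyperbolic triples. To make this rigorous the paper uses Lawther's exact formula for $d_u(G)$ in terms of the Coxeter number $h$ (for type $A$, $d_u(G)=(h^2-e^2)/u+e-1$ where $h\equiv e\pmod u$, $0\le e<u$), derives the uniform upper bound $d_u(G)\le F(u):=(u^2-4u+4h^2)/(4u)$, and verifies $F(a)+F(b)+F(c)<\dim G$ directly. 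Note also that the threshold for $d_u(G)={\rm rk}(G)$ is $u>h$, not $u$ greater than the number of roots.

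Second, and more substantively, the paper organises the reduction through monotonicity rather than a threshold in $c$. One first shows that $u\mapsto d_u(G)$ is decreasing; then nonrigidity is upward-closed for the partial order $(a,b,c)\le(a',b',c')$, and it suffices to check only the three \emph{minimal} hyperbolic triples $(2,3,7)$, $(2,4,5)$ and $(3,3,4)$. Once each of these is nonrigid for a given $G$, every hyperbolic triple is. This is what makes the large-rank step genuinely finite: your threshold argument handles $c$ large for each fixed $(a,b)$, but there are infinitely many such pairs to consider, and it is precisely the monotonicity-plus-minimal-triples device that closes this off. Your treatment of the isogeny statements (iv)--(vi) is correct in outline; the paper's sketch frames the same content as the sandwich $d_u(G_{a.})\le d_u(G)\le d_u(G_{s.c.})$ together with explicit constructions in the simply connected group.
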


  \begin{table}[h!]
   \center{
\begin{tabular}{|l|l|l|}
\hline
$G$ & $p$ & $(a,b,c)$\\
\hline
${\rm SL}_2(K)$ & $p =2$  & $(a,b,c)$\\
& $p \neq 2$ & $(a,b,c)$ $a\geq 3$\\
${\rm SL}_3(K)$ & any  & $(2,b,c)$\\
${\rm SL}_4(K)$ & $p =2$  & $(2,3,c)$\\
& $p\neq 2$ & $(2,3,c)$, $(2,4,c)$, $(3,3,4)$, $(3,4,4)$,$(4,4,4)$\\
${\rm SL}_5(K)$ & any  & $(2,3,c)$\\
${\rm SL}_6(K)$ & $p\neq 2$  & $(2,3,c)$, $(2,4,5)$, $(2,4,6)$\\
${\rm SL}_{10}(K)$ & $p\neq 2$  & $(2,3,7)$\\
\hline
${\rm Sp}_4(K)$ & $p = 2$  & $(2,3,c)$, $(3,3,c)$\\
${\rm Sp}_4(K)$ & $p \neq 2$  & $(2,b,c)$ $b \geq 5$, $(3,3,c)$ $c \geq 5$, $(3,4,c)$ $c\geq 5$, $(4,4,c)$ $c \geq 5$\\ 
${\rm Sp}_6(K)$ & $p \neq 2$  & $(2,5,c)$ $c \geq 7$,  $(2,6,c)$ $c\geq 7$, $(3,3,4)$, $(3,4,4)$, $(4,4,4)$\\
${\rm Sp}_8(K)$ & $p \neq 2$  & $(2,3,c)$ $c\geq 9$, $(2,4,7)$, $(2,4,8)$, $(2,5,5)$, $(2,5,6)$, $(2,6,6)$\\
${\rm Sp}_{10}(K)$ & $p \neq 2$  & $(2,3,c)$ $c\geq 11$,  $(2,4,7)$, $(2,4,8)$\\
${\rm Sp}_{12}(K)$ & $p \neq 2$  & $(2,3,9)$, $(2,3,10)$, $(2,4,6)$\\
${\rm Sp}_{14}(K)$ & $p \neq 2$  & $(2,3,9)$, $(2,3,10)$, $(2,4,6)$\\
${\rm Sp}_{16}(K)$ & $p \neq 2$  & $(2,3,8)$, $(2,4,5)$\\
${\rm Sp}_{18}(K)$ & $p \neq 2$  & $(2,3,8)$, $(2,4,5)$\\
${\rm Sp}_{20}(K)$ & $p \neq 2$  & $(2,3,7)$\\
${\rm Sp}_{24}(K)$ & $p \neq 2$  & $(2,3,7)$\\
${\rm Sp}_{26}(K)$ & $p \neq 2$  & $(2,3,7)$\\
\hline
${\rm Spin}_{11}(K)$ & $p \neq 2$  & $(2,3,7)$\\
${\rm Spin}_{12}(K)$ & $p \neq 2$  & $(2,3,7)$\\
\hline
\end{tabular}
}
\caption{Rigid triples for simple algebraic groups  of simply connected type}\label{ta:rigidsc}
\end{table}
  
 We obtain the following result  as an immediate  corollary to Theorems \ref{t:conmainllm} and \ref{t:classification}.  

\begin{corol}\label{c:s2c1}
Theorem \ref{t:marionconjsimple} holds provided that it holds for $(G,p,(a,b,c))$ as given below. 
\begin{enumerate}[(i)]
\item $G={\rm PSL}_2(K)$ and $p\mid 2abc$.
\item $G={\rm PSL}_3(K)$, $a=2$ and $p\mid 6bc$.
\item $G={\rm PSL}_4(K)$, $(a,b)=(2,3)$ and $p\mid 6c$.
\item $G={\rm PSL}_5(K)$, $(a,b)=(2,3)$ and $p\mid 30c$.
\item $G={\rm PSp}_4(K)$, $(a,b)\in \{(2,3),(3,3)\}$ and $p\mid 6c$.
\item $G=G_2(K)$, $p\in \{2,5\}$ and $(a,b,c)\in \{(2,4,5),(2,5,5)\}$.
\end{enumerate}
\end{corol}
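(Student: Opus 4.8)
The plan is to combine Theorem~\ref{t:conmainllm} with the classification of rigid triples (Theorem~\ref{t:classification}, or more concretely Table~\ref{ta:rigida}), and simply chase which prime characteristics remain uncovered in each adjoint case. Concretely, Theorem~\ref{t:conmainllm} tells us that for a rigid triple $(a,b,c)$ for an adjoint simple algebraic group $G$, only finitely many $G(p^r)$ can be $(a,b,c)$-generated whenever $p\nmid abcd$, where $d$ is the determinant of the Cartan matrix. Thus Theorem~\ref{t:marionconjsimple} only needs to be verified in the (finitely many, once one fixes $(a,b,c)$ up to the free parameter) residual cases where $p \mid abcd$. Since $G$ adjoint with a rigid triple forces $G$ to be one of ${\rm PSL}_2(K), {\rm PSL}_3(K), {\rm PSL}_4(K), {\rm PSL}_5(K), {\rm PSp}_4(K), G_2(K)$ by Table~\ref{ta:rigida}, the argument is just a finite enumeration.

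I would organize the proof case by case according to the rows of Table~\ref{ta:rigida}. For $G={\rm PSL}_{\ell+1}(K)$ the determinant of the Cartan matrix is $d=\ell+1$, so the residual condition $p\mid abcd$ becomes $p\mid abc(\ell+1)$; this gives exactly items (i)--(iv) of the corollary: $p\mid 2abc$ for $\ell=1$, $p\mid 3\cdot abc = 3\cdot 2bc = 6bc$ for $\ell=2$ with $a=2$, $p\mid 4\cdot 6c = 24c$ — wait, here one must note $p\mid 6c$ suffices since $p\mid 4$ forces $p=2\mid 6$; similarly for $\ell=4$, $p\mid 5\cdot 6c = 30c$. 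For $G={\rm PSp}_4(K)$ we have $d=2$, and the rigid triples are $(2,3,c)$ and $(3,3,c)$, so $p\mid abcd$ reduces to $p\mid 6c$ in both cases, giving item (v). For $G=G_2(K)$ we have $d=1$, and the rigid triples $(2,4,5),(2,5,5)$ give $p\mid abc \in\{40,50\}$, i.e. $p\in\{2,5\}$, which is item (vi). In each case one observes that the primes dividing the numeric part already absorb the contribution of $d$ (e.g. $2\mid 6$, $5\mid 30$ etc.), which is why the stated divisibility conditions are as clean as they are.

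The only subtlety — and the place to be slightly careful rather than the "hard part" — is bookkeeping: one must make sure that when $c$ is a free parameter, the reduction still yields a condition of the form "$p$ divides a fixed integer times $c$", and that the finitely many primes not dividing $c$ but dividing $abd$ are correctly listed. For instance in item (iii), $abd = 2\cdot 3\cdot 4 = 24$, whose prime divisors are $2$ and $3$, both dividing $6$; hence $p\mid 6c$ captures all residual primes. A second point worth a sentence is that Theorem~\ref{t:conmainllm} is stated for quasisimple $G(p^r)$, hence a fortiori applies to the simple quotients appearing in Theorem~\ref{t:marionconjsimple}. I do not anticipate a genuine obstacle here: the corollary is a pure deduction from two already-cited results plus the recollection of the Cartan determinants listed in the paragraph preceding Theorem~\ref{t:conmainllm}, and the proof is essentially "apply Theorem~\ref{t:conmainllm} to each line of Table~\ref{ta:rigida} and simplify the divisibility condition."
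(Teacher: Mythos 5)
Your proposal is correct and matches the paper's own (very brief) argument: the corollary is stated there as an immediate consequence of Theorem~\ref{t:conmainllm} together with the classification of rigid triples for adjoint groups in Theorem~\ref{t:classification} (Table~\ref{ta:rigida}), exactly the enumeration and divisibility bookkeeping you carry out. Your simplifications of $p\mid abcd$ to the stated conditions (e.g.\ absorbing $d=4$ into $p\mid 6c$ for ${\rm PSL}_4(K)$) are accurate, so nothing further is needed.
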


We also obtain the following reduction result for Theorem \ref{t:cormain}.

\begin{corol}\label{c:s2c2}
Theorem \ref{t:cormain} holds provided that
\begin{enumerate}[(i)]
\item Theorem \ref{t:marionconjsimple}  holds,
\item Conjecture \ref{c:marionconj} holds  for $G$ non-adjoint of type $A$, and
\item  Conjecture \ref{c:marionconj} holds for $G={\rm Sp}_4(K)$, $p\neq2$ and $(a,b)=(3,3)$.
\end{enumerate}
\end{corol}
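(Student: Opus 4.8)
The plan is to run through the classification of rigid hyperbolic triples in Theorem \ref{t:classification} one isogeny type at a time, using Theorem \ref{t:conmainllm} to dispose of all but finitely many characteristics in each case and matching the residue against Table \ref{ta:marionconjex}. Write $d$ for the determinant of the Cartan matrix of $G$; by Theorem \ref{t:conmainllm}, Conjecture \ref{c:marionconj} holds whenever $p\nmid abcd$, so I may restrict attention to the pairs with $p\mid abcd$ (and to hyperbolic $(a,b,c)$, the non-hyperbolic case being immediate since then $T_{a,b,c}$ is soluble or isomorphic to $\mathrm{Alt}_5$, and so has only finitely many quasisimple quotients). If $G$ is of adjoint type, then by Theorem \ref{t:classification}(iii) the pair $(G,(a,b,c))$ is a row of Table \ref{ta:rigida}, so $G$ is one of $\mathrm{PSL}_2(K),\ldots,\mathrm{PSL}_5(K)$, $\mathrm{PSp}_4(K)$ or $G_2(K)$, and hypothesis (i) --- that is, Theorem \ref{t:marionconjsimple} --- gives the conclusion; none of these rows appears in Table \ref{ta:marionconjex}. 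I would record here the one coincidence needed below: $Z(\mathrm{Sp}_4)$ is trivial in characteristic $2$, so $\mathrm{Sp}_4(K)$ with $p=2$ is of adjoint type and the $p=2$ rows for $\mathrm{Sp}_4$ in Table \ref{ta:rigidsc} are covered by (i) as well.

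Next, for $G$ of simply connected type the pair $(G,(a,b,c))$ appears in Table \ref{ta:rigidsc}. If $G$ is of type $A$, then $G=\mathrm{SL}_n(K)$ is non-adjoint and hypothesis (ii) applies. If $G=\mathrm{Sp}_4(K)$ with $p\neq 2$ and $(a,b)=(3,3)$, hypothesis (iii) applies; and for each remaining $\mathrm{Sp}_{2\ell}(K)$-row with $p\neq 2$ one has $d=2$, so the condition ``$p\mid abcd$, $p\neq 2$'' becomes ``$p$ is an odd prime dividing $abc$'', and a direct inspection of Table \ref{ta:rigidsc} shows the finitely many triples $(G,p,(a,b,c))$ that survive are precisely the $\mathrm{Sp}_{2\ell}$-rows of Table \ref{ta:marionconjex}. (Triples such as $(4,4,4)$ for $\mathrm{Sp}_6(K)$ or $(2,4,8)$ for $\mathrm{Sp}_8(K)$ drop out because no odd prime divides $abc$, which is exactly why they are absent from Table \ref{ta:marionconjex}.) The only simply connected cases left are $G=\mathrm{Spin}_{11}(K)$ or $\mathrm{Spin}_{12}(K)$ with $(a,b,c)=(2,3,7)$ and $p\neq 2$; since $(2,3,7)$ is a triple of primes and $G$ is of type $B_5$ or $D_6$ --- in particular not of type $C_\ell$ with $2\leqs\ell\leqs 13$ nor $G_2$ --- Conjecture \ref{c:marionconj} here follows from \cite[Theorems 1--3]{Marionconj}.

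It remains to treat the isogeny types strictly between simply connected and adjoint. By the dichotomy recalled before Theorem \ref{t:classification}, such a $G$ is either $\mathrm{SL}_n(K)/C$ for a proper nontrivial $C\leqs Z(\mathrm{SL}_n(K))$ --- non-adjoint of type $A$, hence covered by (ii) --- or, with $p\neq 2$, a group $\mathrm{SO}_{2\ell}(K)$ or $\mathrm{HSpin}_{2\ell}(K)$ ($\ell$ even) of type $D_\ell$. For $\mathrm{SO}_{2\ell}(K)$, Theorem \ref{t:classification}(iv) identifies its rigid triples with those of the adjoint group $\mathrm{PSO}_{2\ell}(K)$, which has none by Table \ref{ta:rigida}, so there is nothing to prove; for $\mathrm{HSpin}_{2\ell}(K)$, Theorem \ref{t:classification}(v) identifies them with those of $\mathrm{Spin}_{2\ell}(K)$, forcing $\ell=6$ and $(a,b,c)=(2,3,7)$ with $p\neq 2$, once more a triple of primes for a group of type $D_6$ and hence settled by \cite[Theorems 1--3]{Marionconj}. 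Assembling the cases shows that Conjecture \ref{c:marionconj} holds for every $(G,p,(a,b,c))$ outside Table \ref{ta:marionconjex}, which is the assertion of Theorem \ref{t:cormain}.

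The argument has essentially no analytic content --- the substance sits in hypotheses (i)--(iii) and in the cited classification and finiteness theorems --- so the real difficulty is organizational, and the step most likely to cause trouble is the simultaneous bookkeeping of isogeny type and triple. One must keep the rigidity classifications for the various isogeny types strictly apart (for type $C_2$ a triple can be reducible for the simply connected group yet rigid for the adjoint group, so Tables \ref{ta:red} and \ref{ta:rigida} must not be conflated), apply Theorem \ref{t:classification}(iv)--(vi) correctly to the mixed isogeny types, and verify that the odd primes left over after invoking Theorem \ref{t:conmainllm} for each $\mathrm{Sp}_{2\ell}$-row reproduce Table \ref{ta:marionconjex} exactly --- neither more nor fewer cases. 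Checking that the residual $\mathrm{Spin}_{11}$, $\mathrm{Spin}_{12}$ and $\mathrm{HSpin}_{12}$ instances genuinely fall under the triple-of-primes results of \cite{Marionconj} is the other place that requires a little care.
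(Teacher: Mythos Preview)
Your argument is correct and follows essentially the same route as the paper: invoke the classification Theorem \ref{t:classification} together with Theorem \ref{t:conmainllm}, note that $\mathrm{Sp}_4(K)\cong\mathrm{PSp}_4(K)$ when $p=2$ so that the $p=2$ row falls under hypothesis~(i), and dispose of the residual $\mathrm{Spin}_{11}$, $\mathrm{Spin}_{12}$, $\mathrm{HSpin}_{12}$ cases with $(2,3,7)$ via prior results from \cite{Marionconj}. The only cosmetic difference is that the paper cites \cite[Proposition~3.2]{Marionconj} (which shows these groups are \emph{never} Hurwitz), whereas you invoke \cite[Theorems~1--3]{Marionconj} (the finiteness statement for triples of primes outside types $C_\ell$ and $G_2$); either reference suffices.
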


 \begin{proof}
The result is again a consequence of   Theorems \ref{t:conmainllm} and \ref{t:classification}. Here we need the further observation that if ${\rm char}(K)=2$ then ${\rm Sp}_4(K)$ is abstractly isomorphic to ${\rm PSp}_4(K)$.  Moreover we refer to  \cite[Proposition 3.2]{Marionconj} where it is shown  that if $p\neq 2$ and $n\in \{11,12\}$ then  ${\rm Spin}_n(p^r)$ and ${\rm HSpin}_{12}(p^r)$  are never Hurwitz groups (i.e. $(2,3,7)$-groups).
\end{proof}

We complete this section by giving below the main ideas involved in the proof of Theorem \ref{t:classification}. We refer the reader to \cite{MarionLawther}  for more details.\\

\noindent \textit{Sketch of the proof of Theorem \ref{t:classification}.}
 The principal ingredient in the proof of  Theorem \ref{t:classification} is the determination of $d_u(G)$ for every simple algebraic group $G$ and every positive integer $u$. Indeed, given a simple algebraic group $G$ over an algebraically closed field $K$ and a hyperbolic triple $(a,b,c)$ of  integers, saying that $(a,b,c)$ is rigid (respectively reducible, nonrigid) for $G$ amounts to saying that the sum $\mathbf{S}_{(a,b,c)}=d_a(G)+d_b(G)+d_c(G)$ is equal to (respectively, greater than, smaller than) $\dim G$. The value for $d_u(G)$ depends on the Euclidean division of the Coxeter number $h$ of $G$ by $u$. In particular, if $u>h$ then $d_u(G)$ is equal to the Lie rank $\ell$ of $G$.  \\
To determine $d_u(G)$ one first shows that 
$$ d_u(G_{a.})\leq d_u(G)\leq d_u(G_{s.c})$$
where $G_{s.c.}$ (respectively, $G_{a.}$) denotes the simple algebraic group of simply connected (respectively, adjoint) type over $K$ of same Lie type and Lie rank as $G$.\\
One then uses the result of \cite{Lawther} determining $d_u(G_{a.})$ and giving a lower bound for $d_u(G)$. The next step is to determine $d_u(G)$ for $G$ a simple algebraic group of simply connected type. One proceeds by constructing a specific  element $g$ of $G_{s.c.}$ of order dividing $u$ with centralizer  $C_{G_{s.c.}}(g)$ of small dimension and  one then   proves that $\dim C_{G_{s.c.}}(g)$ must  in fact be equal to $d_u(G_{s.c.})$. 
For $G$ not abstractly isomorphic to a group of simply connected or adjoint type, one has to pursue further considerations (see the proof of \cite[Theorem 5]{MarionLawther}). \\
With the principal ingredient to hand, one can show that, given  a simple algebraic group $G$, $d_u(G): \mathbb{N}\rightarrow \mathbb{N}$ seen as a function of $u$ is decreasing.   \\
One then puts a partial order on the set of hyperbolic triples of integers in the following way. Given two hyperbolic triples $(a,b,c)$ and $(a',b',c')$, we say that $(a,b,c)\leq (a',b',c')$ if and only if $a\leq a'$, $b\leq b'$ and $c\leq c'$. In particular, among all hyperbolic triples, exactly three are minimal: $(2,3,7)$, $(2,4,5)$ and  $(3,3,4)$. Given a simple algebraic group $G$, since  $d_u(G)$ is a decreasing function of $u$, if $(a,b,c)$ is nonrigid for $G$ then $(a',b',c')$ is nonrigid for $G$ for every $(a',b',c')\geq (a,b,c)$.  If $G$ is of type $A_\ell$ with $\ell \geq 11$, or $G$ is of type $B_\ell$ with $\ell\geq 10$,  or $G$ is of type $C_\ell$ with $\ell\geq15$, or $G$ is of type $D_\ell$ with $\ell \geq 9$, one can show, using general tight upper bounds for $d_u(G)$, that $(2,3,7)$, $(2,4,5)$ and $(3,3,4)$ are all nonrigid for $G$. For $G$ of exceptional type or of classical type of low rank,  one uses the precise values for $d_u(G)$ to classify the hyperbolic triples of integers for $G$ as reducible, rigid or nonrigid. \\
If one only considers the case where $G$ is of adjoint type,  less work is involved. 
As an illustration, we consider the case where $G$ is of type $A_\ell$ or $E_8$.  Given $G$ and a  hyperbolic triple $(a,b,c)$ of integers, we let $\mathbf{D}_{(a,b,c)}=\mathbf{S}_{a,b,c}-\dim G$,  so that $(a,b,c)$ is rigid (respectively, reducible, nonrigid) for $G$ if $\textbf{D}_{(a,b,c)}$ is equal to 0 (respectively, positive, negative). Also given the Coxeter number $h$ of $G$ and a positive integer $u$, we write $h=zu+e$  where $z$ and $e$ are nonnegative integers such that $0\leq e\leq u-1$.\\
Suppose first that $G$ is of type $A_\ell$ so that $h=\ell+1$ and $\dim G=h^2-1$. By \cite[Theorem 1]{Lawther}
\begin{equation}\label{e:toutmys}
d_u(G)=z^2u+e(2z+1)-1  =   \frac{(h-e)(h+e)}u+e-1. 
\end{equation} 
In particular, one can show that $d_u(G)$ is a decreasing function of $u$.
Let $g(e)= \frac{(h-e)(h+e)}u+e-1$ Then $g'(e)=\frac{u-2e}u$, and $g'(e)>0$ if and only if $e<u/2$. Hence $d_u(G)\leq F(u)$ where 
\begin{eqnarray*}
F(u) &  =& g(u/2)\\
& = & \frac{u^2-4u+4h^2}{4u}.  
\end{eqnarray*}
In particular, for any hyperbolic triple $(a,b,c)$ of integers, we have $\mathbf{S}_{(a,b,c)}\leq F(a)+F(b)+F(c)$.\\
Suppose $(a,b,c)=(2,3,7)$. We have $\mathbf{S}_{(2,3,7)}\leq F(2)+F(3)+F(7)$. Now
$$F(2)=\frac{h^2-1}2, \quad F(3)=\frac{4h^2-3}{12}, \quad F(7)=\frac{4h^2+21}{28}.$$
Hence $$\mathbf{S}_{(2,3,7)}\leq  \frac{41h^2}{42}$$
and $$\mathbf{D}_{(2,3,7)}\leq -\frac{h^2}{42}+1.$$
Since   $-\frac{h^2}{42}+1$ is negative for $h \geq 7$, it follows that $(2,3,7)$ is nonrigid for $G$ provided $h\geq 7$, that is  $\ell\geq 6$. Also every hyperbolic triple $(a',b',c')$ of integers with $(a',b',c')\geq (2,3,7)$ is nonrigid for $G$ with $\ell\geq 6$.\\ 
Suppose $(a,b,c)=(2,4,5)$. We have $\mathbf{S}_{(2,4,5)}\leq F(2)+F(4)+F(5)$. Now
$$F(2)=\frac{h^2-1}2, \quad F(4)=\frac{h^2}{4}, \quad F(5)=\frac{4h^2+5}{20}.$$
Hence $$\mathbf{S}_{(2,4,5)}\leq  \frac{19h^2}{20}-\frac14$$
and $$\mathbf{D}_{(2,4,5)}\leq -\frac{h^2}{20}+\frac{3}4.$$
Since   $-\frac{h^2}{20}+\frac 34$ is negative for $h \geq 4$, it follows that $(2,4,5)$ is nonrigid for $G$ provided $h\geq 4$, that is  $\ell\geq 3$. Also every hyperbolic triple $(a',b',c')$ of integers with $(a',b',c')\geq (2,4,5)$ is nonrigid for $G$ with $\ell\geq 3$.\\
Suppose $(a,b,c)=(3,3,4)$. We have $\mathbf{S}_{(3,3,4)}\leq F(3)+F(3)+F(4)$. Now
$$ F(3)=\frac{4h^2-3}{12}, \quad F(4)=\frac{h^2}{4}.$$
Hence $$\mathbf{S}_{(3,3,4)}\leq  \frac{11h^2}{12}-\frac12$$
and $$\mathbf{D}_{(3,3,4)}\leq -\frac{h^2}{12}+\frac{1}2.$$
Since   $-\frac{h^2}{12}+\frac 12$ is negative for $h \geq 3$, it follows that $(3,3,4)$ is nonrigid for $G$ provided $h\geq 3$, that is  $\ell\geq 2$. Also every hyperbolic triple $(a',b',c')$ of integers with $(a',b',c')\geq (3,3,4)$ is nonrigid for $G$ with $\ell\geq 2$. \\
To classify the hyperbolic triples  $(a,b,c)$ of integers as reducible, rigid or nonrigid  for $G$ of type $A_\ell$ with $\ell \leq 5$, one can use (\ref{e:toutmys}) to find $d_a(G)$, $d_b(G)$ and $d_c(G)$ and compute $\textbf{D}_{(a,b,c)}$. Note that if $u\geq h$ then $d_u(G)=\ell$. \\
Finally suppose that $G$ is of type $E_8$ so that $h=30$. The values of $d_u(G)$ are given in \cite{Lawther}. In particular $d_u(G)$ is a decreasing function of $u$. Moreover $\textbf{D}_{(2,3,7)}$,  $\textbf{D}_{(2,4,5)}$ and  $\textbf{D}_{(3,3,4)}$ are all negative, and so $(2,3,7)$, $(2,4,5)$ and $(3,3,4)$ are all nonrigid for $G$. It follows that every hyperbolic triple of integers is nonrigid for $G$. 
$\square$\\

  \section{Reducibility and linear rigidity}\label{s:concepts}
  
  In this section we prove the following result.
  
  \begin{prop}\label{p:lalalaboudou}
  Theorems \ref{t:marionconjsimple} and  \ref{t:cormain} hold provided that  Conjecture \ref{c:marionconj} holds for $G={\rm PSp}_4(K)$ and $(a,b)=(3,3)$.
  \end{prop}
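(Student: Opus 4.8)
The plan is to peel off, family by family, the cases to which Corollaries \ref{c:s2c1} and \ref{c:s2c2} reduce Theorems \ref{t:marionconjsimple} and \ref{t:cormain}: the hypothesis of the Proposition disposes of one family, and \emph{linear rigidity} --- in the sense of \cite{Katz}, applied as in \cite[Lemma 3.2]{Marionconj} --- disposes of the rest. By Corollary \ref{c:s2c1}, Theorem \ref{t:marionconjsimple} follows once Conjecture \ref{c:marionconj} is known in the six families (i)--(vi) listed there. Family (v) with $(a,b)=(3,3)$ is exactly the hypothesis, so what remains is: the type $A$ families (i)--(iv); the family $G={\rm PSp}_4(K)$ with $(a,b)=(2,3)$; and the family $G=G_2(K)$ with $(a,b,c)\in\{(2,4,5),(2,5,5)\}$.

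For the type $A$ families I would reproduce the argument of \cite[Lemma 3.2]{Marionconj}. Suppose ${\rm PSL}_n(q)$ ($2\leqslant n\leqslant 5$, $q=p^r$) is $(a,b,c)$-generated by $\bar x,\bar y$; lift these to $x,y\in{\rm SL}_n(q)$, so that $x^a,y^b,(xy)^c$ lie in the bounded-order centre $Z=Z({\rm SL}_n(q))$ and $M:=\langle x,y\rangle$ surjects onto ${\rm PSL}_n(q)$, whence $M$ acts absolutely irreducibly on $\F_q^n$ (as $MZ={\rm SL}_n(q)$ and $Z$ is scalar). Viewing $M\leqslant{\rm SL}_n(\overline{\F}_p)$, the triple $(x,y,(xy)^{-1})$ lies in one of \emph{finitely many} triples $(C_1,C_2,C_3)$ of conjugacy classes of ${\rm SL}_n(\overline{\F}_p)$ --- finitely many even when $p\mid abc$, since an element of bounded order is a semisimple element with bounded-order eigenvalues times a commuting bounded-order unipotent, and so lies in one of finitely many classes --- and each $C_i$, being ${\rm Frob}_p$-stable, is defined over $\F_p$. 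For each such triple admitting an absolutely irreducible solution one verifies, using the classification of rigid triples for type $A$ from \cite{MarionLawther}, that it is linearly rigid, so $(x,y)$ is determined up to ${\rm GL}_n(\overline{\F}_p)$-conjugacy; a standard Lang--Steinberg/descent argument then places $M$ inside a ${\rm GL}_n(\overline{\F}_p)$-conjugate of ${\rm GL}_n(\F_p)$, whence $|{\rm PSL}_n(q)|\leqslant|{\rm GL}_n(\F_p)|$. This bounds $q$, and therefore $r$.

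For $G={\rm PSp}_4(K)$ with $(a,b)=(2,3)$, and for $G=G_2(K)$ with the two triples above, I would embed $G$ in a group $H$ of type $A$ on which $G$ acts absolutely irreducibly, and run the same machine: ${\rm PSp}_4(K)\hookrightarrow{\rm SL}_5(K)$ via the $5$-dimensional module (or ${\rm Sp}_4(K)\hookrightarrow{\rm SL}_4(K)$ when $p=2$), and $G_2(K)\hookrightarrow{\rm SL}_7(K)$, resp.\ $G_2(K)\hookrightarrow{\rm Sp}_6(K)\hookrightarrow{\rm SL}_6(K)$ when $p=2$. If $G(q)$ is $(a,b,c)$-generated then its generators lie in one of finitely many class triples of $H(\overline{\F}_p)$, each defined over $\F_p$; for each of these, either the triple admits no solution acting absolutely irreducibly on the natural $H$-module $V$ --- and then $G(q)$, which does act absolutely irreducibly on $V$, cannot be $(a,b,c)$-generated through it --- or it does, and one verifies it is linearly rigid in $H$ and concludes as in the type $A$ case that $r$ is bounded. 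This proves Theorem \ref{t:marionconjsimple}. Theorem \ref{t:cormain} then follows from Corollary \ref{c:s2c2}: Conjecture \ref{c:marionconj} for $G$ non-adjoint of type $A$ is covered by the linear rigidity argument above, which applies verbatim to ${\rm SL}_n(K)$ and to any quotient ${\rm SL}_n(K)/C$ (the preimage in ${\rm SL}_n(q)$ of a generating pair has the same class structure and still acts absolutely irreducibly on $\F_q^n$); and for $G={\rm Sp}_4(K)$ with $p\ne2$ and $(a,b)=(3,3)$ it suffices to note that any $(3,3,c)$-generated quotient ${\rm Sp}_4(p^r)$ maps onto the $(3,3,c)$-generated group ${\rm PSp}_4(p^r)$, so the corresponding set of $r$ is contained in the finite (by hypothesis) set for ${\rm PSp}_4$ --- while ${\rm Sp}_4(K)\cong{\rm PSp}_4(K)$ when $p=2$.

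The step I expect to be the real obstacle is the explicit case-by-case analysis underlying the previous two paragraphs: for each of the finitely many relevant class triples --- in ${\rm SL}_n$ for the type $A$ families, and in ${\rm SL}_4$, ${\rm SL}_5$, ${\rm SL}_6$, ${\rm SL}_7$ for the embedding families --- deciding whether it admits an absolutely irreducible solution and, when it does, checking that it is linearly rigid. This is delicate exactly in the subcases with $p\mid abc$, where the generators need not be semisimple, the relevant conjugacy classes are not pinned down by the element orders alone, and one must enumerate the admissible Jordan types (in ${\rm SL}_n$, and inside ${\rm Sp}_4$ or $G_2$, using the restriction of the natural module to $G$) and test the rigidity criterion for each. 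Conceptually this is a routine extension of \cite[Lemma 3.2]{Marionconj} past the reductions already recorded in Corollaries \ref{c:s2c1} and \ref{c:s2c2}, but it is where the actual computation lies.
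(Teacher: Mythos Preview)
Your proposal is essentially the paper's approach: reduce via Corollaries \ref{c:s2c1} and \ref{c:s2c2}, dispatch the type $A$ families by linear rigidity (the paper's Lemma \ref{l:almostrig}, essentially \cite[Lemma 3.2]{Marionconj}), and handle ${\rm PSp}_4$ with $(a,b)=(2,3)$ and $G_2$ by embedding into a type $A$ group and invoking the reducibility/linear-rigidity dichotomy (Lemmas \ref{l:almostred} and \ref{l:almostrig}). Your reduction of the ${\rm Sp}_4$, $(3,3)$ case to the ${\rm PSp}_4$ hypothesis by passing to the quotient is also what the paper does.

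Two small points of divergence are worth flagging. For ${\rm PSp}_4(K)$ with $(a,b)=(2,3)$, you embed into ${\rm SL}_5(K)$ via the $5$-dimensional module; the paper instead uses ${\rm PSp}_4(K)\hookrightarrow{\rm PSL}_4(K)$ and observes that $(2,3,c)$ is already rigid for ${\rm PSL}_4(K)$ by Table \ref{ta:rigida}, so no new computation is needed --- and it disposes of $p\in\{2,3\}$ by citing \cite[Proposition 6.2]{Liebeck96} that ${\rm PSp}_4(p^r)$ is then never $(2,3)$-generated. Your ${\rm SL}_5$ route also works, since $(2,3,c)$ is rigid for ${\rm SL}_5(K)$ by Table \ref{ta:rigidsc}, but the ${\rm PSL}_4$ choice is the more economical one. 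Secondly, your Lang--Steinberg step (``$M$ lies in a conjugate of ${\rm GL}_n(\mathbb{F}_p)$'') is stronger than what is needed and would want justification; the paper simply notes that linear rigidity gives finitely many ${\rm GL}_n(K)$-orbits of irreducible $(a,b,c)$-triples, hence finitely many isomorphism types of subgroups, which already bounds $r$.
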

  
  Throughout this section,  $K$ denotes an algebraically closed field of characteristic $p$ (possibly equal to 0, unless otherwise stated). We begin  with the following definition. 
  
  \begin{defn}
Let $H={\rm SL}_n(K)/C$ where $C\leqs Z({\rm SL}_n(K))$, $G$ be a closed  subgroup of $H$ and $(n_1,n_2,n_3)$ be a hyperbolic triple of integers. We say that $(n_1,n_2,n_3)$ is \emph{$G$-reducible for $H$}, \emph{$G$-rigid for $H$} or \emph{$G$-nonrigid for $H$}, respectively, if the maximum value of 
\[\dim g_1^H+\dim g_2^H+\dim g_3^H\]
(where, for $i\in \{1,2,3\}$, $g_i\in G$ has order dividing $n_i$)
is less than, equal to, or greater than $2 \dim H$, respectively.
\end{defn}

Let $G$ be a  closed subgroup of a simple algebraic group $H$ of type $A$ over $K$ and let   $(a,b,c)$ be a hyperbolic triple of integers.  Recall that by an irreducible subgroup of a classical group, we mean a subgroup acting irreducibly on the natural module for the overgroup. We prove below that if  $(a,b,c)$ is not $G$-nonrigid for $H$ then there are up to isomorphism only finitely many (possibly zero) $(a,b,c)$-generated subgroups of $G$ that are irreducible for $H$. The latter result is key  for proving Proposition \ref{p:lalalaboudou}.     We begin with the case where $(a,b,c)$ is $G$-reducible for $H$. 
 
 \begin{lem}\label{l:almostred}
Let $H={\rm SL}_n(K)/C$ where $C\leqs Z({\rm SL}_n(K))$  and  let $G$ be a closed subgroup of $H$. Suppose $(n_1,n_2,n_3)$ is a hyperbolic triple of integers  which is $G$-reducible for $H$. If  $(g_1,g_2,g_3)$ is a triple of elements $g_i$ of $G$ of order dividing $n_i$ such that $g_1g_2g_3=1$ then $\langle g_1,g_2\rangle$ is a reducible subgroup of $H$. 
\end{lem}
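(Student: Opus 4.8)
The plan is to argue by contradiction: suppose $\la g_1,g_2\ra$ acts irreducibly on the natural module $V=K^n$ for $H$ (lifting everything to ${\rm SL}_n(K)$ where necessary, which is harmless since scalars act trivially on projective space and do not change orbit dimensions). Since $g_3=(g_1g_2)^{-1}$, irreducibility of $\la g_1,g_2\ra$ is equivalent to irreducibility of the triple $(g_1,g_2,g_3)$ in the sense that no proper nonzero subspace of $V$ is invariant under all three. The key point will be a standard dimension count for the variety of such triples: the space of homomorphisms $\rho\colon T_{n_1,n_2,n_3}\to H$ with $\rho(x)$ in a fixed class $C_1$, $\rho(y)$ in $C_2$, $\rho(z)$ in $C_3$ (where $C_i$ is the conjugacy class of $g_i$) has dimension governed by $\dim C_1+\dim C_2+\dim C_3$, and if this sum is strictly less than $2\dim H$ then such a triple, if it exists and is irreducible, forces a contradiction via the tangent-space/cohomology estimate.

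First I would set up the representation variety picture precisely. Let $\mathcal{C}=C_1^H\times C_2^H\times C_3^H\subseteq H^3$ and let $\mathcal{Z}=\{(x_1,x_2,x_3)\in\mathcal C: x_1x_2x_3=1\}$. For an irreducible triple, the centralizer in $H$ of $\{x_1,x_2,x_3\}$ is finite (by Schur's lemma, it consists of scalars, hence is trivial in $H={\rm SL}_n/C$ up to the finite center), so the $H$-conjugation orbit of $(x_1,x_2,x_3)$ inside $\mathcal Z$ has dimension $\dim H$. The standard estimate (going back to Weil, and used in \cite{LLM1}) is that at an irreducible point the dimension of $\mathcal Z$ is at most $\dim C_1+\dim C_2+\dim C_3-\dim H$; equivalently the fibre of the product map $C_1^H\times C_2^H\to H$, $(x_1,x_2)\mapsto (x_1x_2)^{-1}$, over a point of $C_3^H$ has the expected dimension. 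Since $(n_1,n_2,n_3)$ is $G$-reducible for $H$ and each $g_i\in G$ has order dividing $n_i$, we have $\dim C_1+\dim C_2+\dim C_3\leq \dim g_1^H+\dim g_2^H+\dim g_3^H\leq$ (the maximum over such triples) $<2\dim H$. Hence at an irreducible point $\dim\mathcal Z<2\dim H-\dim H=\dim H$, while the $H$-orbit through an irreducible point has dimension exactly $\dim H$ — contradiction, since an orbit cannot have dimension exceeding that of the variety containing it.

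The step I expect to be the main obstacle is making the dimension bound $\dim_{(x_1,x_2,x_3)}\mathcal Z\leq \dim C_1+\dim C_2+\dim C_3-\dim H$ rigorous at an irreducible point without a blanket hypothesis $p\nmid n_1n_2n_3$ (which is exactly the case we are \emph{not} assuming here). The resolution is that this particular inequality is purely a statement about tangent spaces to conjugacy classes and the multiplication map, and does \emph{not} require coprimality: one computes that the tangent space to $\mathcal Z$ at $(x_1,x_2,x_3)$ injects into $\{(v_1,v_2,v_3)\in T_{x_1}C_1\times T_{x_2}C_2\times T_{x_3}C_3 : \text{(linearized relation)}\}$, and the linearized relation map to $T_1 H=\frakg$ is surjective precisely because the joint centralizer is trivial (Schur), so the kernel has dimension $\sum\dim C_i-\dim\frakg$; irreducibility is exactly what delivers the surjectivity. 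Thus the argument is: (i) reduce to ${\rm SL}_n$ and to fixed classes $C_i$; (ii) observe trivial joint centralizer under the irreducibility assumption; (iii) deduce the orbit through $(x_1,x_2,x_3)$ is open in $\mathcal Z$ of dimension $\dim H$; (iv) bound $\dim\mathcal Z\le\sum\dim C_i-\dim H<\dim H$ from $G$-reducibility; (v) conclude no such irreducible triple exists, so $\la g_1,g_2\ra$ is reducible. One should also remark at the outset that passing between $H$ and ${\rm SL}_n(K)$ changes neither $\dim g_i^H$ nor the notion of (ir)reducibility on $V$, so the hypothesis ``$G$-reducible for $H$'' may be used verbatim.
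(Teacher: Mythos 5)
Your argument is correct in outline, but it takes a genuinely different route from the paper. The paper disposes of this lemma in two lines by quoting \cite[Proposition 2.1]{Marionconj}, which encodes precisely the inequality you are re-deriving: if $g_1g_2g_3=1$ and $\la g_1,g_2\ra$ is irreducible on the natural module, then $\dim g_1^H+\dim g_2^H+\dim g_3^H\geq 2\dim H$; that proposition is in substance L.~Scott's module-theoretic inequality applied to the conjugation action on ${\rm M}_n(K)$, so it is pure linear algebra valid in every characteristic. You instead run the representation-variety dimension count: the $H$-orbit through an irreducible triple in $\mathscr{Z}=\{(x_1,x_2,x_3)\in C_1\times C_2\times C_3 : x_1x_2x_3=1\}$ has dimension $\dim H$ (finite stabiliser by Schur), while surjectivity of the linearised product map bounds $\dim_{x}\mathscr{Z}\leq \sum\dim C_i-\dim H<\dim H$. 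What your route buys is a self-contained geometric proof in the spirit of \cite{LLM1} (and it correctly isolates that no coprimality hypothesis $p\nmid n_1n_2n_3$ is needed for this particular tangent-space bound); what the paper's route buys is brevity and the avoidance of all separability/duality issues, since Scott's inequality needs no tangent spaces at all.

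One point in your sketch needs repair, precisely in the characteristics this lemma is actually needed for. You justify surjectivity of the linearised relation map onto $\frakg={\rm Lie}(H)$ by ``the joint centralizer is trivial (Schur)'', but when $p\mid n$ the joint centraliser of an irreducible subgroup in $\frakg$ is \emph{not} trivial: the scalar matrices lie in $\mathfrak{sl}_n$, and the invariant trace form on $\mathfrak{sl}_n$ (or on ${\rm Lie}({\rm PGL}_n)$) is degenerate, so the naive perp argument inside $\frakg$ breaks down; worse, for e.g.\ a regular unipotent $y$ with $p\mid n$ one has $(1-\Ad(y))\mathfrak{sl}_n\subsetneq(1-\Ad(y))\mathfrak{gl}_n$, so using only the $\mathrm{SL}_n$-orbit-map image of the class tangent spaces does not obviously suffice. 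The correct fix is to work throughout in $\mathfrak{gl}_n$ with its nondegenerate trace form: the centraliser of any element of ${\rm GL}_n$ is the unit group of a centraliser algebra, hence smooth, so $T_{x_i}C_i=(1-\Ad(x_i))\mathfrak{gl}_n$; the common perp of these images is $\bigcap_i\mathfrak{c}_{\mathfrak{gl}_n}(y_i)=KI$ by absolute irreducibility, so their sum has dimension $n^2-1$ and, being contained in $\mathfrak{sl}_n$, equals $\mathfrak{sl}_n$. With that substitution your steps (i)--(v), including the harmless lift from $H={\rm SL}_n(K)/C$ to ${\rm SL}_n(K)$ after adjusting one lift by the central element, go through in all characteristics.
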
 

\begin{proof}
Since $(n_1,n_2,n_3)$ is $G$-reducible for $H$, it follows that $\dim g_1^H+\dim g_2^H+\dim g_3^H< 2\dim H$. As $g_1g_2g_3=1$, it now follows from \cite[Proposition 2.1]{Marionconj} that $\langle g_1, g_2\rangle$ is a reducible subgroup of $H$.   
\end{proof}
 
 We continue with the case where $(a,b,c)$ is $G$-rigid for $H$.  The following result is a slight generalisation of \cite[Lemma 3.2]{Marionconj} in which the case where $G=H$ and $a$, $b$ and $c$ are all primes is covered.  
 \begin{lem}\label{l:almostrig} Let $H={\rm SL}_n(K)/C$ where $C\leqs Z({\rm SL}_n(K))$ and let $G\leqs H$ be a simple algebraic group. The following assertions hold. 
\begin{enumerate}[(i)] 
 \item Suppose $H={\rm SL}_n(K)$. If $(a,b,c)$ is a $G$-rigid triple of integers for $H$ then up to
conjugacy in ${\rm GL}_n(K)$, there are only finitely many subgroups of $G$
that are both $(a, b, c)$-generated and  irreducible subgroups of $H$.
\item Suppose $C$ is nontrivial. If $(a,b,c)$ is a $G$-rigid triple of integers for $H$ then up
to isomorphism, there are only finitely many  subgroups of  $G$
that are both $(a,b,c)$-generated and  irreducible subgroups of $H$.
 \end{enumerate}
 \end{lem}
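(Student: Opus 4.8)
The plan is to follow the strategy of \cite[Lemma 3.2]{Marionconj}, using the notion of \emph{linear rigidity} for a tuple of conjugacy classes in $\mathrm{GL}_n(K)$. Recall that a tuple $(C_1,C_2,C_3)$ of conjugacy classes of $\mathrm{GL}_n(K)$ is linearly rigid if, whenever $g_i \in C_i$ with $g_1g_2g_3 = 1$ and $\langle g_1,g_2,g_3\rangle$ acts irreducibly, the tuple $(g_1,g_2,g_3)$ is determined up to simultaneous conjugacy in $\mathrm{GL}_n(K)$; and that $(C_1,C_2,C_3)$ is linearly rigid precisely when $\sum_{i=1}^3 \dim C_i = 2(n^2-1)$ together with the irreducibility condition (this is the rigidity criterion going back to Katz, cf. \cite{Katz}). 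The key arithmetic observation is that an element $g_i \in G$ of order dividing $n_i$ has only finitely many possibilities for its eigenvalue multiset on the natural module of $H$ (the eigenvalues are $n_i$-th roots of unity in $K$, and there are finitely many multisets of size $n$), hence lies in one of finitely many conjugacy classes $C$ of $\mathrm{GL}_n(K)$, and $\dim g_i^H \le \dim C$ with $\dim C$ depending only on the multiset.

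For part (i), suppose $\langle g_1, g_2\rangle \le G$ is $(a,b,c)$-generated and irreducible on the natural module, with $g_1, g_2, g_3 = (g_1g_2)^{-1}$ of orders dividing $a,b,c$ respectively. First I would fix the eigenvalue data: there are finitely many triples $(C_1,C_2,C_3)$ of $\mathrm{GL}_n(K)$-classes that can arise as the classes of $g_1, g_2, g_3$. For each such triple, since $(a,b,c)$ is $G$-rigid for $H$ we have $\dim g_1^H + \dim g_2^H + \dim g_3^H \le 2\dim H = 2(n^2-1)$; on the other hand $\dim g_i^H \le \dim C_i$, and I would argue (as in \cite{Marionconj}) that if the sum $\sum \dim C_i$ were strictly less than $2(n^2-1)$ then the triple would be ``reducible'' in the linear-algebra sense and, by a counting/Scott-type argument, $\langle g_1, g_2\rangle$ could not act irreducibly — contradiction. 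Hence $\sum_i \dim C_i = 2(n^2-1)$ exactly, so $(C_1,C_2,C_3)$ is a linearly rigid tuple, and by the rigidity criterion the irreducible solution $(g_1,g_2,g_3)$ is unique up to conjugacy in $\mathrm{GL}_n(K)$. Since there are finitely many admissible eigenvalue triples, there are finitely many $\mathrm{GL}_n(K)$-conjugacy classes of pairs $(g_1,g_2)$, hence finitely many $\mathrm{GL}_n(K)$-conjugacy classes of irreducible $(a,b,c)$-subgroups of $G$, as required.

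For part (ii), where $H = \mathrm{SL}_n(K)/C$ with $C \ne 1$, I would lift the situation to $\mathrm{SL}_n(K)$: the preimage $\widetilde{G}$ of $G$ in $\mathrm{SL}_n(K)$ is a closed subgroup, elements of $G$ of order dividing $n_i$ lift to elements of $\widetilde{G}$ whose order divides $n_i |C|$, so again there are only finitely many possible $\mathrm{GL}_n(K)$-classes for the lifts, and the $G$-rigidity hypothesis transfers (dimensions of conjugacy classes are unchanged under the central isogeny). Applying part (i) to the lifted data gives finitely many $\mathrm{GL}_n(K)$-conjugacy classes of lifted pairs, and projecting back down shows the resulting irreducible $(a,b,c)$-subgroups of $G$ fall into finitely many isomorphism classes. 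Here one passes from ``finitely many up to conjugacy'' to ``finitely many up to isomorphism'' because conjugate subgroups are in particular isomorphic, and the central quotient identifies only finitely many of these.

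The main obstacle I anticipate is the implication, in part (i), that a strict inequality $\sum_i \dim C_i < 2(n^2-1)$ forces reducibility of $\langle g_1, g_2 \rangle$: this is exactly the content one needs from \cite[Proposition 2.1]{Marionconj} (the same input used in Lemma \ref{l:almostred}), and care is needed because we are working with $G$-rigidity (a condition about $\dim g_i^H$ for $g_i$ ranging over $G$) rather than with $H$-rigidity, so one must check that the eigenvalue-data classes $C_i$ genuinely satisfy $\dim C_i = \dim g_i^H$ for the \emph{generic}/extremal $g_i$ and that no smaller class can support an irreducible triple. A secondary, more bookkeeping-type point is ensuring the reduction in part (ii) does not lose or gain subgroups when passing through the isogeny $\mathrm{SL}_n(K) \to H$; this is routine but must be stated cleanly. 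In both parts the argument is, as the authors note, essentially that of \cite[Lemma 3.2]{Marionconj}, the only new feature being that $a,b,c$ need not be prime, which affects only the (still finite) enumeration of eigenvalue multisets and nowhere else.
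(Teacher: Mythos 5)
Your part (i) is essentially the paper's own argument: Scott's inequality (\cite[Proposition 2.1]{Marionconj}) combined with $G$-rigidity forces the class-dimension sum to equal $2\dim H$, the criterion of \cite[Lemma 3.1]{Marionconj} then gives linear rigidity of each irreducible triple, and finiteness of the possible class data yields finitely many ${\rm GL}_n(K)$-orbits of triples, hence of subgroups. Two small imprecisions there are harmless: when $p$ divides $n_i$ the ${\rm GL}_n(K)$-class of $g_i$ is not determined by its eigenvalue multiset alone (you also need the Jordan structure, which still leaves only finitely many classes), and you want the equality $\dim g_i^H=\dim C_i$ rather than an inequality, which holds because the scalars are central, so ${\rm GL}_n$- and ${\rm SL}_n$-class dimensions agree.

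In part (ii), however, the step ``the $G$-rigidity hypothesis transfers \dots\ applying part (i) to the lifted data'' would fail if taken literally. To quote part (i) you would need the triple of integers $(n_1|C|,n_2|C|,n_3|C|)$ bounding the orders of the lifts to be rigid for ${\rm SL}_n(K)$, and it need not be: an element of the preimage of $G$ of order dividing $n_i|C|$ need not project to an element of order dividing $n_i$, so the maximum in the definition of rigidity is taken over a strictly larger set and can exceed $2\dim {\rm SL}_n(K)$. For instance, for $H={\rm SL}_4(K)/C$ with $|C|=2$ the triple $(2,3,c)$ is rigid, while $(4,6,2c)$ is nonrigid for ${\rm SL}_4(K)$. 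What is true, and what the paper actually uses, is only that the \emph{specific} lifted triples inherit the dimension bound: after replacing $g_3$ by $g_3\zeta^{-1}$ so that $g_1g_2g_3=1$, class dimensions are unchanged under the central isogeny, so \cite[Proposition 2.1]{Marionconj} applied in $H$ together with $G$-rigidity gives $\sum_i\dim g_i^{{\rm SL}_n(K)}=2\dim {\rm SL}_n(K)$ for every triple that arises; the paper then builds this equality into the definition of its set $\hat{\mathcal{T}}$ of lifted triples and applies the linear-rigidity criterion of \cite[Lemma 3.1]{Marionconj} directly, rather than citing part (i) as a black box. Your parenthetical remark that class dimensions are invariant under the isogeny is precisely the ingredient needed for this repair, so the fix is immediate, but as written the reduction to part (i) is not valid.
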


The proof of   Lemma \ref{l:almostrig} is very similar to that of \cite[Lemma 3.2]{Marionconj} and uses the concept of linear rigidity. For further details, we refer the reader to \cite[\S3]{Marionconj}.  For completeness, we recall that a triple  $(g_1, g_2, g_3)$ of elements of ${\rm GL}_n(K)$  with $g_1g_2g_3=1$ is said to be linearly rigid if  for any   triple $(h_1,h_2,h_3)$  of elements of ${\rm GL}_n(K)$
with $h_1h_2h_3=1$ such that   $h_i$ is conjugate to $g_i$  for each  $i$, there exists an element $g$ in ${\rm GL}_n(K)$ with $g_i=gh_ig^{-1}$ for all $i$.
\begin{proof}
  We let $(n_1,n_2,n_3)=(a,b,c)$ and denote by $|h|$ the order of an element $h \in {\rm SL}_n(K)$. \\
  We first consider part (i). Let
$$\mathcal{T}= \{(g_1, g_2, g_3) \in G^3: g_1g_2g_3 =1,  |g_i| \ \textrm{divides} \ n_i,  \langle g_1,  g_2\rangle \ \textrm{is an irreducible subgroup of} \ H \}$$
and for a fixed triple $(C_1,C_2,C_3)$ of conjugacy classes  $C_i$ of $G$ consisting respectively
of elements of orders  dividing $n_1$, $n_2$, $n_3$, let
$$\mathcal{T}_{(C_1,C_2,C_3)}=\{(g_1,g_2, g_3) \in \mathcal{T} : g_i \in  C_i\}.$$ Since $(n_1, n_2, n_3)$ is a $G$-rigid triple of integers for $H$, it follows from \cite[Proposition 2.1]{Marionconj} and
\cite[Lemma 3.1]{Marionconj} that every element of $\mathcal{T}$ is linearly rigid. Hence if $\mathcal{T}_{(C_1,C_2,C_3)}\neq \emptyset$ then
${\rm GL}_n(K)$ is transitive on $\mathcal{T}_{(C_1,C_2,C_3)}$. 
As $G$ is simple,  the number of conjugacy classes $C_i$ of elements
of order  dividing $n_i$ in $G$ is finite. It follows that ${\rm GL}_n(K)$ has finitely many orbits
on $\mathcal{T}$. 
Therefore up to conjugacy in ${\rm GL}_n(K)$, there are only finitely many 
subgroups of $G$ that are $(n_1,n_2,n_3)$-generated and are irreducible subgroups of $H$.\\
Finally let us consider part (ii). Suppose now that $(n_1, n_2, n_3)$ is a $G$-rigid triple of integers for ${\rm SL}_n(K)/C$. Suppose
that $(g_1, g_2, g_3)$ are elements of ${\rm SL}_n(K)$ such that $g_1g_2g_3= \zeta$ for some $\zeta \in C$, $g_iC \in G$ has
order dividing $n_i$, and $\langle g_1,  g_2\rangle$ is an irreducible subgroup of ${\rm SL}_n(K)$. Replacing $g_3$ by $g_3\zeta^{-1}$ we then have $g_1g_2g_3= 1$,
the order $|g_i|$ divides $nn_i$ and $\langle g_1 , g_2\rangle$ is an irreducible subgroup of ${\rm SL}_n(K)$. Furthermore, the $G$-rigidity of $(n_1,n_2, n_3)$ for ${\rm SL}_n(K)/C$ and
\cite[Proposition 2.1]{Marionconj} applied to ${\rm SL}_n(K)/C$  yield
$$\sum \dim g_i^{{\rm SL}_n(K)}
= 2 \dim {\rm SL}_n(K).$$
Set $\hat{\mathcal{T}}$ to be the set of triples $(g_1,g_2,g_3)$ of elements $g_i \in  {\rm SL}_n(K)$ ($1\leq i \leq 3$)  such that $g_1g_2g_3=1$, the order of $g_i$ divides $nn_i$, $g_iC$ is an element of $G$, $ \langle g_1, g_2\rangle $ is an irreducible subgroup of ${\rm SL}_n(K)$ and $$ \sum \dim g_i^{{\rm SL}_n(K)} = 2 \dim {\rm SL}_n(K).$$

By \cite[Lemma 3.1]{Marionconj} every element of $\hat{\mathcal{T}}$ is linearly rigid, and a similar argument to that
given in (i) above shows that ${\rm GL}_n(K)$ has finitely many orbits on $\hat{\mathcal{T}}$. Therefore up
to isomorphism there are only finitely many  subgroups of $G$ that
are $(n_1, n_2, n_3)$-generated and are irreducible subgroups of $H$. 
\end{proof}

As an immediate consequence of Lemma \ref{l:almostrig} applied to the special case where $G=H$, we obtain the following result.

  \begin{prop}\label{p:marionconjtypea}
  Let $K$ be an algebraically closed field  of prime characteristic. If $G={\rm SL}_n(K)/C$ where $C\leqs Z({\rm SL}_n(K))$ then Conjecture \ref{c:marionconj}  holds. 
  \end{prop}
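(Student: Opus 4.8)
The plan is to derive this immediately from Lemma~\ref{l:almostrig} applied with $G=H$, the only real work being to check that every relevant finite quasisimple group is an irreducible subgroup of $H$.

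First I would reduce to the setting of Lemma~\ref{l:almostrig}. Let $G={\rm SL}_n(K)/C$ with $C\leqs Z({\rm SL}_n(K))$ and suppose $(a,b,c)$ is a rigid triple for $G$. We may assume $1/a+1/b+1/c<1$: otherwise $T_{a,b,c}$ is soluble or isomorphic to ${\rm Alt}_5$, and in particular has only finitely many quasisimple quotients, so the conjecture is trivial in that case. Thus $(a,b,c)$ is a hyperbolic triple. By \cite[Lemma 3.1]{Lawther} the maximum of $\dim g^G$ over elements $g\in G$ of order dividing $u$ equals $j_u(G)$, so the rigidity hypothesis $j_a(G)+j_b(G)+j_c(G)=2\dim G$ says precisely that $(a,b,c)$ is $G$-rigid for $H:=G$ in the sense of the definition preceding Lemma~\ref{l:almostred} (taking the subgroup there to be $H$ itself). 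Lemma~\ref{l:almostrig}, with $G=H$, then yields that there are only finitely many subgroups of $G$ that are simultaneously $(a,b,c)$-generated and irreducible subgroups of $H$ --- up to conjugacy in ${\rm GL}_n(K)$ if $C=1$ and up to isomorphism if $C\neq 1$. Since conjugate subgroups are isomorphic, in both cases only finitely many isomorphism types occur.

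Next I would verify that each finite quasisimple group $G(p^r)=(G^F)'$, viewed inside $G\leqs H$, is an irreducible subgroup of $H$; that is, that its preimage in ${\rm SL}_n(K)$ acts irreducibly on the natural module $K^n$. Let $\phi\colon{\rm SL}_n(K)\to G$ be the isogeny with kernel $C$. For all but finitely many $q=p^r$ the group $({\rm SL}_n(K)^F)'$ is ${\rm SL}_n(q)$ or ${\rm SU}_n(q)$, according as $F$ is untwisted or twisted, and is perfect; a standard property of isogenies then gives $(G^F)'=\phi(({\rm SL}_n(K)^F)')$, so $\phi^{-1}(G(p^r))$ contains a copy of ${\rm SL}_n(q)$ or ${\rm SU}_n(q)$. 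As both of these act absolutely irreducibly on $K^n$, so does $\phi^{-1}(G(p^r))$, as required. The finitely many small $q$ for which $(G^F)'$ fails to be quasisimple contribute only finitely many groups and may be discarded.

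Finally, if infinitely many of the groups $G(p^r)$ were $(a,b,c)$-generated, then by the first two paragraphs they would account for only finitely many isomorphism types; but for fixed $p$ and $n$ the orders $|G(p^r)|$ grow without bound and each value is attained finitely often, so the $G(p^r)$ realise infinitely many isomorphism types --- a contradiction. Hence only finitely many $G(p^r)$ are $(a,b,c)$-generated, which is Conjecture~\ref{c:marionconj} for $G$. The one step that needs care, rather than being a purely formal consequence of Lemma~\ref{l:almostrig}, is the irreducibility claim in the third paragraph: tracking $(G^F)'$ through the isogeny $\phi$ and identifying it up to a central subgroup with ${\rm SL}_n(q)$ or ${\rm SU}_n(q)$. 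Everything else is bookkeeping.
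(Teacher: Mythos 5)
Your proposal is correct and follows the paper's own route: the paper obtains this proposition precisely as an immediate consequence of Lemma~\ref{l:almostrig} in the special case $G=H$. The extra steps you spell out (matching rigidity with $G$-rigidity via $j_u$, the irreducibility of the preimage of $(G^F)'$ through the isogeny, and the fact that infinitely many $r$ would give infinitely many isomorphism types) are exactly the bookkeeping the paper leaves implicit, and they are carried out correctly.
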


In the next two results we make further progress on Conjecture \ref{c:marionconj} in the special case where $G$ is of type $C_2$ and $(a,b)=(2,3)$ or $G$ is of type $G_2$. In doing so we use Lemmas \ref{l:almostred} and \ref{l:almostrig}.  

 \begin{prop}\label{p:marionconjtypec2one} 
 If $G={\rm PSp}_4(K)$ and $(a,b)=(2,3)$ then Conjecture \ref{c:marionconj} holds. 
  \end{prop}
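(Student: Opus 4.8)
The plan is to reduce the statement for $G = {\rm PSp}_4(K)$ with $(a,b) = (2,3)$ to the linear-rigidity machinery of Lemmas \ref{l:almostred} and \ref{l:almostrig} by embedding ${\rm Sp}_4(K)$ inside a type $A$ group. Concretely, I would set $H = {\rm SL}_4(K)$ (or ${\rm PGL}_4(K)$, keeping track of centers as in Corollary \ref{c:s2c1}(v)), regard ${\rm Sp}_4(K)$ as the closed subgroup of $H$ preserving a symplectic form, and let $G = {\rm Sp}_4(K) \leqs H$ act on the natural $4$-dimensional module $V$. The goal is to apply the dichotomy: either every $(2,3,c)$-triple in $G$ acts reducibly on $V$, in which case the relevant finite groups ${\rm Sp}_4(p^r)$ are never $(2,3,c)$-generated and there is nothing to prove, or $(2,3,c)$ is $G$-rigid (or $G$-reducible) for $H$ and Lemma \ref{l:almostrig} (or Lemma \ref{l:almostred}) finishes it by giving only finitely many isomorphism types of irreducible $(2,3,c)$-subgroups, hence only finitely many ${\rm Sp}_4(p^r)$.

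The first concrete step is to compute the quantity $\max(\dim g_1^H + \dim g_2^H + \dim g_3^H)$ over triples $g_i \in G$ of orders dividing $2, 3, c$ and compare it with $2\dim H = 2(n^2-1) = 30$. Here I must be careful that the dimensions are taken in $H = {\rm SL}_4(K)$, not in ${\rm Sp}_4(K)$; an element of order $2$ in ${\rm Sp}_4(K)$ is $\pm$ a symplectic involution and as an element of ${\rm SL}_4(K)$ its class dimension is at most the dimension of the class of ${\rm diag}(1,1,-1,-1)$, namely $8$; an element of order $3$ in ${\rm Sp}_4(K)$ has ${\rm SL}_4$-class dimension at most $10$ (the regular semisimple case, eigenvalues among $1, \omega, \omega^2$ with the symplectic pairing constraint — I would check whether $\{\omega,\omega,\omega^2,\omega^2\}$ or $\{1,1,\omega,\omega^2\}$ type elements actually lie in ${\rm Sp}_4$ and compute accordingly); an element of order dividing $c$ is at most regular, with ${\rm SL}_4$-class dimension $12$. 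I then determine, for each value of $c$ (noting from Corollary \ref{c:s2c1}(v) only $p \mid 6c$ matters, so in fact only finitely many residue classes of $c$ are relevant once $p$ is fixed, but I want a uniform statement), whether the sum can reach or exceed $30$ when the subgroup acts irreducibly, exploiting that irreducibility on $V$ forces the semisimple parts to be ``large''. The key structural input is \cite[Proposition 2.1]{Marionconj}: if $\sum \dim g_i^H < 2\dim H$ then $\langle g_1, g_2\rangle$ is reducible; contrapositively, an irreducible $(2,3,c)$-subgroup forces the sum to be $\geqs 2\dim H$, and then I only need to handle the rigid case via Lemma \ref{l:almostrig} and argue the nonrigid case does not arise (or is excluded because $(2,3,c)$ for ${\rm Sp}_4$ with the relevant $p$ is never nonrigid for $H = {\rm SL}_4$ when restricted to symplectic triples).

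I expect the main obstacle to be the case analysis distinguishing, among $(2,3,c)$-triples in ${\rm Sp}_4(K)$ acting irreducibly on the natural $4$-dimensional module, exactly which conjugacy classes of order-$2$ and order-$3$ elements can occur and pinning down their ${\rm SL}_4$-class dimensions in small characteristic ($p = 2, 3$, where unipotent contributions appear and centralizer dimensions jump). In particular when $p = 3$ an ``order $3$'' element may be unipotent (a single Jordan block or $J_2 \oplus J_2$ or $J_3 \oplus J_1$), and I must verify these still give class dimension small enough that $(2,3,c)$ remains $G$-rigid or $G$-reducible for $H$ rather than $G$-nonrigid — if it were $G$-nonrigid the lemmas give nothing. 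My expectation, consistent with Table \ref{ta:rigida}, is that $(2,3,c)$ is in fact $G$-reducible or $G$-rigid for ${\rm SL}_4(K)$ when restricted to symplectic triples (since generically such a triple lands in ${\rm Sp}_4$ which is a proper subgroup, its closure cannot be all of ${\rm SL}_4$), so Lemma \ref{l:almostred} or \ref{l:almostrig} applies. The final bookkeeping step is then to translate ``finitely many isomorphism types of $(2,3,c)$-generated irreducible subgroups of ${\rm Sp}_4(K)$'' into ``finitely many $r$ with ${\rm PSp}_4(p^r)$ a $(2,3,c)$-group'', using that a $(2,3,c)$-generated finite simple group ${\rm PSp}_4(p^r)$ lifts (via Corollary \ref{c:s2c1}(v) and the passage between ${\rm PSp}_4$ and ${\rm Sp}_4$ in the relevant characteristics) to such a subgroup acting irreducibly on $V$, and distinct $r$ give non-isomorphic groups.
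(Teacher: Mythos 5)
Your core strategy --- embed the type $C_2$ group in a type $A$ group, observe that $(2,3,c)$ cannot be $G$-nonrigid for the ambient group, and finish with Lemmas \ref{l:almostred} and \ref{l:almostrig} --- is exactly the strategy of the paper's proof. The two differences are matters of economy: the paper does not recompute class dimensions case-by-case but simply quotes Theorem \ref{t:classification} (the triple $(2,3,c)$ is rigid for ${\rm PSL}_4(K)$ in every characteristic, which already bounds the symplectic class-dimension sum by $2\dim {\rm PSL}_4(K)$, so no analysis over $c$ or over unipotent classes is needed), and it disposes of $p\in\{2,3\}$ at the outset by citing \cite[Proposition 6.2]{Liebeck96}, which says ${\rm PSp}_4(p^r)$ is never even $(2,3)$-generated in those characteristics. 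Your plan to treat $p=2,3$ inside the same class-dimension computation would also work (the bound $8+10+12=30$ persists for unipotent involutions and unipotent order-$3$ elements), it is just more labour than the paper's shortcut; also note that your heuristic ``a symplectic triple cannot have Zariski-dense closure in ${\rm SL}_4$'' is not what rules out nonrigidity --- only the explicit dimension bound does.

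The one step you must set up the paper's way is the passage between ${\rm PSp}_4$ and ${\rm Sp}_4$. If you literally take $H={\rm SL}_4(K)$, $G={\rm Sp}_4(K)$ and apply Lemma \ref{l:almostrig}(i) to the triple $(2,3,c)$, the conclusion concerns $(2,3,c)$-generated irreducible subgroups of ${\rm Sp}_4(K)$, and a $(2,3,c)$-generating triple of ${\rm PSp}_4(p^r)$ does not lift to one: the lift of the involution has order dividing $4$ and the lift of the product has order dividing $2c$, so the lifted subgroup ${\rm Sp}_4(p^r)$ is only a $(4,3,2c)$-group. (Indeed, by Table \ref{ta:red} and Proposition \ref{p:marionred}, for $p\neq 2$ the group ${\rm Sp}_4(p^r)$ is \emph{never} $(2,3,c)$-generated, so the finiteness statement you would obtain in the ${\rm SL}_4$ setting is vacuous and says nothing about ${\rm PSp}_4(p^r)$.) The correct setting, and the one the paper uses, is $G={\rm PSp}_4(K)\leqs H={\rm PSL}_4(K)$ together with Lemma \ref{l:almostrig}(ii), whose proof carries out precisely the central-lifting bookkeeping (orders dividing $nn_i$, product equal to a central element) that your final paragraph leaves vague; after that, ``finitely many isomorphism types of irreducible $(2,3,c)$-generated subgroups'' bounds the set of admissible $r$ exactly as you say.
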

  
  \begin{proof}
 In \cite[Proposition 6.2]{Liebeck96} it is shown that ${\rm PSp}_4(p^r)$ is never $(2,3)$-generated when $p\in \{2,3\}$. We can therefore assume that $p\not \in \{2,3\}$. By Theorem \ref{t:classification} the triple $(2,3,c)$ which is rigid for ${\rm PSp}_4(K)$ remains rigid for ${\rm PSL}_4(K)$. The result now follows from Lemma  \ref{l:almostrig}. 
  \end{proof}

\begin{prop}\label{p:marionconjtypeg2one}
  Suppose $G=G_2(K)$, $p \in \{2,5\}$ and  $(a,b,c)\in \{(2,4,5), (2,5,5)\}$.
  \begin{enumerate}[(i)]
    \item If $p=5$ or $(a,b,c)=(2,4,5)$ then  there are no positive integers $r$ such that $G_2(p^r)$ is an $(a,b,c)$-group.
  \item If $p=2$ and $(a,b,c)=(2,5,5)$ then  there are only finitely many positive integers $r$ such that $G_2(p^r)$ is an $(a,b,c)$-group.
  \end{enumerate}  
  \end{prop}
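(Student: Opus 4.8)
\textbf{Proposal for the proof of Proposition \ref{p:marionconjtypeg2one}.}

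The plan is to embed $G = G_2(K)$ into a simple algebraic group $H$ of type $A$ via the $7$-dimensional (if $p \neq 2$) or $6$-dimensional (if $p = 2$) irreducible representation, and then run the dichotomy developed in Lemmas \ref{l:almostred} and \ref{l:almostrig}: either every $(a,b,c)$-generated subgroup of $G$ acts reducibly on the natural module for $H$, in which case no finite simple image $G_2(p^r)$ can arise (since $G_2(p^r)$ acts irreducibly on this module), or the triple is $G$-rigid for $H$ and Lemma \ref{l:almostrig} gives finiteness directly. So the first step is, for each of the two characteristics $p \in \{2,5\}$ and each relevant triple, to pin down the appropriate module: for $p=5$ one uses the $7$-dimensional irreducible module and $H = {\rm SL}_7(K)$; for $p = 2$ one uses the $6$-dimensional module (the Weyl module $V_{\omega_1}$ has a trivial composition factor when $p=2$, so one works with the $6$-dimensional irreducible quotient) and $H = {\rm Sp}_6(K)$, noting $G_2(K) \leqslant {\rm Sp}_6(K)$ in characteristic $2$; one then needs the version of the argument for $H$ of type $C$, which should follow the same lines using the classical-group analogue of \cite[Proposition 2.1]{Marionconj}.

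The second step is the computation of $\dim g^H$ for elements $g \in G_2(K)$ of orders dividing $2$, $4$ and $5$, for each of the two characteristics, using the known conjugacy class data for $G_2$ in small characteristic (these are tabulated in the literature on unipotent and semisimple classes, e.g. the work of Lawther on $d_u(G)$ recalled in \S\ref{s:pre}, together with standard class tables). One then forms the sum $\max(\dim g_1^H + \dim g_2^H + \dim g_3^H)$ over elements of the prescribed orders and compares with $2\dim H$. For part (i), the claim is that in the cases $p=5$ (both triples) and $(a,b,c) = (2,4,5)$ (characteristic $2$), this maximum is strictly less than $2\dim H$, i.e. the triple is $G$-reducible for $H$; then Lemma \ref{l:almostred} (or its $C_\ell$-analogue) forces $\langle g_1, g_2\rangle$ to be reducible on the natural module, so it cannot be (a cover of) $G_2(p^r)$, and no $(a,b,c)$-generation occurs. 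For part (ii), the claim is that when $p=2$ and $(a,b,c) = (2,5,5)$, the maximum equals $2\dim H$, i.e. the triple is $G$-rigid for $H = {\rm Sp}_6(K)$ (or the $A$-type overgroup if one prefers the $7$-dimensional module in a characteristic-independent formulation — but in characteristic $2$ the natural module is $6$-dimensional, so $C_3$ is the right ambient type); then Lemma \ref{l:almostrig} (applied with the relevant classical overgroup, or after passing to an $A$-type overgroup of the $6$-dimensional module) yields that there are only finitely many isomorphism types of $(2,5,5)$-generated irreducible subgroups, hence only finitely many $r$ with $G_2(2^r)$ a $(2,5,5)$-group.

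I expect the main obstacle to be twofold. First, the class-dimension bookkeeping in characteristics $2$ and $5$ is delicate: element orders such as $4$ and $5$ in characteristic $2$ or $5$ mix unipotent and semisimple behaviour (order $4$ is $2$-power in characteristic $2$, order $5$ is $p$-power in characteristic $5$), so one must be careful to list \emph{all} classes of elements of order dividing $n_i$ — including those whose order is exactly $n_i$ and those whose order properly divides it — and take the genuine maximum of the centralizer-codimension; getting a wrong maximum in either direction breaks the argument. Second, one needs the analogue of Lemmas \ref{l:almostred} and \ref{l:almostrig} for $H$ a symplectic group rather than $H = {\rm SL}_n(K)/C$; the cleanest fix is probably to observe that $G_2(K) \leqslant {\rm Sp}_6(K) \leqslant {\rm SL}_6(K)$ in characteristic $2$ and that an irreducible symplectic subgroup is in particular irreducible in ${\rm SL}_6(K)$, so one can reduce the rigidity/reducibility statements to the $A$-type case already handled — but one must then re-examine whether $(2,5,5)$ is rigid for ${\rm SL}_6(K)$ (as opposed to ${\rm Sp}_6(K)$), consulting Table \ref{ta:rigidsc}, and argue that the ${\rm Sp}_6$-class dimensions coincide with the ${\rm SL}_6$-class dimensions for the relevant elements, or else invoke \cite[Proposition 2.1]{Marionconj} in the form valid for the symplectic group. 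Verifying this compatibility, and in particular that the rigid case genuinely lands on $2\dim H$ rather than just below or above it, is where the real content lies; everything else is an application of the machinery already in place.
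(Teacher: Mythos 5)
Your strategy is essentially the paper's: embed $G_2(K)$ in a type-$A$ overgroup via the $7$-dimensional module for $p=5$ and the $6$-dimensional module for $p=2$, compute the maximal class dimensions in the overgroup for elements of $G_2(K)$ of order dividing $2$, $4$, $5$, and then apply Lemma \ref{l:almostred} in the $G$-reducible cases (both triples for $p=5$, and $(2,4,5)$ for $p=2$) and Lemma \ref{l:almostrig} in the $G$-rigid case ($(2,5,5)$ for $p=2$). One correction to your second paragraph and your list of obstacles: the detour through $H={\rm Sp}_6(K)$ and a hypothetical symplectic analogue of the two lemmas is unnecessary, and your suggested checks there are off-target. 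The paper works directly with $H={\rm SL}_6(K)$ (the lemmas only require $G$ to be a closed subgroup of ${\rm SL}_n(K)/C$, so $G_2(K)\leqslant{\rm Sp}_6(K)\leqslant{\rm SL}_6(K)$ causes no difficulty), and the relevant condition is $G_2(K)$-reducibility or $G_2(K)$-rigidity \emph{for} ${\rm SL}_6(K)$, i.e.\ the maximum of $\sum\dim g_i^{{\rm SL}_6(K)}$ over $g_i\in G_2(K)$ of the prescribed orders compared with $2\dim{\rm SL}_6(K)=70$; this is not an entry of Table \ref{ta:rigidsc} (which concerns rigidity of triples for the group itself), nor does one need ${\rm Sp}_6$-class dimensions to ``coincide'' with ${\rm SL}_6$-class dimensions. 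Concretely, the paper uses Lawther's Jordan-form data and the explicit semisimple classes of order $5$ (from Moody--Patera and Cooperstein) to get, for $p=2$, class dimensions $16,18$ (involutions), $24$ (order $4$) and $24,26$ (order $5$) in ${\rm SL}_6(K)$, so the maxima give $18+24+26=68<70$ for $(2,4,5)$ (reducible, hence no $G_2(2^r)$ image) and $18+26+26=70$ for $(2,5,5)$ (rigid, hence finiteness); for $p=5$ it gets $20,24,32$ for the unipotent order-$5$ classes and bounds $24$, $36$ for orders $2$, $4$ in ${\rm SL}_7(K)$, so both sums fall below $2\dim{\rm SL}_7(K)=96$ and both triples are excluded outright. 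Your class-dimension bookkeeping step is exactly right in spirit; just be aware that the rigid case must land precisely on $70$ in ${\rm SL}_6(K)$, as it does.
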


\begin{proof}
Note that in \cite{Mariong2} it was shown that $G_2(5^r)$ is
never a $(2, 5,5)$-group. For completeness we include a slightly different proof of
this latter fact. \\
Write $G=G_2(K)$ where $K$ is an algebraically closed field of  characteristic $p\in \{2,5\}$.\\
 Suppose first that $p=2$.   Then $G$ is a subgroup of ${\rm SL}_6(K)$. 
By \cite{Lawther95}, an involution in $G_2(K)$ is conjugate in ${\rm SL}_6(K)$ to an element $g_{2,1}$ or $g_{2,2}$ having Jordan form  $J_1^2\oplus J_2^2$ or $J_2^3$ respectively, and an element of $G_2(K)$ of order 4 is conjugate in ${\rm SL}_6(K)$ to an element $g_{4,1}$ having Jordan form $g_{4,1}=J_3^2$.\\
 By \cite[p. 367]{MoodyPatera} there are four conjugacy classes in $G_2(K)$  of elements of order 5. Moreover all these classes are real (see \cite[Theorem 1.5]{Tiep}). However, by \cite[Table 10]{MoodyPatera} none of these classes are rational. The representatives of these four classes are therefore of the form: $\alpha$, $\alpha^2$, $\beta$ and $\beta^2$. 
 Finally, following \cite[p.34] {Cooperstein},  we deduce that an element of order $5$ in $G_2(K)$ is conjugate in ${\rm SL}_6(K)$ to an element of the form
$$ g_{5,1}={\rm diag}(\omega,\omega^{-1},\omega^2,\omega^{-2}, \omega, \omega^{-1}) \quad \textrm{or} \quad g_{5,2}={\rm diag}(\omega,\omega,1,1,\omega^{-1},\omega^{-1})$$ (or the square of such an element) where $\omega \in K$ is a primitive fifth root of unity. 
Following \cite[Theorem 3.1]{LiebeckSeitz} we have
$$ \dim g_{2,1}^{{\rm SL}_6(K)} = 16, \quad   \dim g_{2,2}^{{\rm SL}_6(K)} = 18 \quad \textrm{and} \quad \dim g_{4,1}^{{\rm SL}_6(K)} = 24.$$
Also an easy calculation yields: 
 $$\dim g_{5,1}^{{\rm SL}_6(K)} = 26 \quad \textrm{and} \quad   \dim g_{5,2}^{{\rm SL}_6(K)} = 24.$$
 In particular   $(2,4,5)$  is $G_2(K)$-reducible for ${\rm SL}_6(K)$. By Lemma \ref{l:almostred} a $(2,4,5)$-generated subgroup of $G$ is  a reducible subgroup of ${\rm SL}_6(K)$. It follows that there are no positive integers $r$ such that $G_2(2^r)$ is a $(2,4,5)$-group.\\
 Also if $(a,b,c)=(2,5,5)$ then $(a,b,c)$ is $G_2(K)$-rigid for ${\rm SL}_6(K)$. It now follows from Lemma \ref{l:almostrig} that there are only finitely many positive integers $r$ such that $G_2(2^r)$ is a $(2,5,5)$-group. \\
 
 Suppose finally that $p=5$.  Then $G$ is a subgroup of ${\rm SL}_7(K)$. 
By \cite{Lawther95}, an element of order 5 in $G_2(K)$ is conjugate in ${\rm SL}_7(K)$ to an element $g_{5,1}$, $g_{5,2}$ or $g_{5,3}$ having Jordan form  $J_1^3\oplus J_2^2$, $J_1^1\oplus J_2^3$,  or $J_1^1\oplus J_3^2$ respectively. 
Following \cite[Theorem 3.1]{LiebeckSeitz} we have
 $$ \dim g_{5,1}^{{\rm SL}_7(K)} = 20, \ \dim g_{5,2}^{{\rm SL}_7(K)} = 24 \quad \textrm{and} \quad  \dim g_{5,3}^{{\rm SL}_7(K)} = 32.$$
Also an element $g_{2,1}$ of ${\rm SL}_7(K)$ of order 2 whose centralizer in ${\rm SL}_7(K)$ has minimal dimension satisfies $ \dim g_{2,1}^{{\rm SL}_7(K)}=24$ and an element $g_{4,1}$ of  ${\rm SL}_7(K)$ of order 4 whose centralizer in ${\rm SL}_7(K)$ has minimal dimension satisfies  $\dim g_{4,1}^{{\rm SL}_7(K)}=36$. 
Indeed one can take $g_{2,1}={\rm diag}(1,1,1,-1,-1,-1,-1)$ and $g_{4,1}={\rm diag}(1,-1,-1,\omega,\omega,\omega^{-1},\omega^{-1})$ where $\omega \in K$ is a primitive fourth root of unity.
Hence if $(a,b,c)\in\{(2,4,5),(2,5,5)\}$  then $(a,b,c)$ is $G_2(K)$-reducible for ${\rm SL}_7(K)$. By Lemma \ref{l:almostred} an $(a,b,c)$-generated subgroup of $G$ is  a reducible subgroup of ${\rm SL}_7(K)$. It follows that there are no positive integers $r$ such that $G_2(5^r)$ is an $(a,b,c)$-group.
\end{proof}


\noindent\textit{Proof of Proposition \ref{p:lalalaboudou}.} The result now follows immediately from Corollaries \ref{c:s2c1} and \ref{c:s2c2} together with Propositions \ref{p:marionconjtypea}-\ref{p:marionconjtypeg2one}.


\section{$(3,3,c)$-generation for ${\rm Sp}_4(p^r)$ and ${\rm PSp}_4(p^r)$}\label{s:sp433}

In this section, given a positive integer $c$ and a prime number $p$ we prove that there are only finitely many positive integers $r$ such that the finite groups ${\rm Sp}_4(p^r)$ and ${\rm PSp}_4(p^r)$ are  $(3,3,c)$-groups. This establishes Conjecture \ref{c:marionconj} in the special case where $G={\rm Sp}_4(K)$ or ${\rm PSp}_4(K)$ with $K$ an algebraically closed field of prime characteristic and $(a,b)=(3,3)$.  The case studied in this section requires a different approach to that of \S\ref{s:concepts}. Indeed, since $(3,3,c)$ with $c\geq5$ (respectively, with $c\geq 4$) is ${\rm Sp}_4(K)$-nonrigid (respectively, ${\rm PSp}_4(K)$-nonrigid) for ${\rm SL}_4(K)$ (respectively, ${\rm PSL}_4(K)$), one can no longer make use of Lemma \ref{l:almostred} or Lemma \ref{l:almostrig}.

\begin{thm}\label{t:33sp}
Let $p$ be a prime number and $c$ be a positive integer. The following assertions hold.
\begin{enumerate}[(i)]
\item There are only finitely many positive integers $r$ such that ${\rm Sp}_4(p^r)$ is a $(3,3,c)$-group.  
\item There are only finitely many positive integers $r$  such that ${\rm PSp}_4(p^r)$ is a $(3,3,c)$-group.
\item In particular, if $G={\rm Sp}_4(K)$ or   ${\rm PSp}_4(K)$, and $(a,b)=(3,3)$ then Conjecture \ref{c:marionconj} holds. Hence Theorems \ref{t:marionconjsimple} and \ref{t:cormain} also hold.
\end{enumerate}
\end{thm}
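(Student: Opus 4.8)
I would prove (i) in full and deduce (ii) and (iii) from it. Part (iii) is then immediate on combining (i), (ii) with Proposition~\ref{p:lalalaboudou}. For (ii): if $\mathrm{PSp}_4(p^r)$ is $(3,3,c)$-generated, lift the generators to $\mathrm{Sp}_4(p^r)$, choosing lifts of order dividing $3$ (possible, since negating a lift does not change its image in $\mathrm{PSp}_4$); their product has order dividing $2c$, and the subgroup they generate is all of $\mathrm{Sp}_4(p^r)$ because $\mathrm{Sp}_4(p^r)$ is (apart from finitely many small exceptions) quasisimple and hence has no proper subgroup mapping onto $\mathrm{PSp}_4(p^r)$. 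So (ii) for $c$ follows from (i) applied with $2c$ in place of $c$. By Theorem~\ref{t:conmainllm} (the Cartan determinant of $C_2$ being $2$) part~(i) is already known when $p\nmid 6c$, although the argument below is uniform in $(p,c)$.

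\textbf{Reducing to a statement about character fields.}
Suppose $\mathrm{Sp}_4(p^r)=\langle \bar x,\bar y\rangle$ with $\bar x^3=\bar y^3=(\bar x\bar y)^c=1$. The natural $4$-dimensional module of $\mathrm{Sp}_4(p^r)$ is absolutely irreducible and has trace field $\mathbb{F}_{p^r}$ (standard: products of two transvections inside a symplectic-plane $\mathrm{SL}_2(p^r)$ already realise every trace $4+\lambda\mu$ with $\lambda,\mu\in\mathbb{F}_{p^r}$). Base-changing to $K=\overline{\mathbb{F}_p}$ gives an absolutely irreducible pair $(x,y)$ in $\mathrm{Sp}_4(K)$ with $x^3=y^3=(xy)^c=1$ whose trace field on $K^4$ equals $\mathbb{F}_{p^r}$. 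Hence it suffices to show that, as $r$ and the pair vary, only finitely many fields arise this way; being finite fields of characteristic $p$, this forces $r$ into a finite set.

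\textbf{The computation.}
By Burnside's theorem $K[\langle x,y\rangle]=M_4(K)$, so by a dual-basis argument the trace field is generated over $\mathbb{F}_p$ by the traces $t_1,\dots,t_N$ of a fixed finite list of words in $x,y$. An order-dividing-$3$ element of $\mathrm{Sp}_4(K)$ is non-scalar unless trivial, so irreducibility forces $x$ and $y$ to have order exactly $3$, and there are only finitely many possibilities for their conjugacy classes (eigenvalue multiset $\{1,1,\omega,\omega^{-1}\}$ or $\{\omega,\omega,\omega^{-1},\omega^{-1}\}$ when $p\neq3$, one of the finitely many unipotent classes with $(u-1)^3=0$ when $p=3$). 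For each pair of shapes I would fix $x$ in normal form and parametrise $y$ rationally over its class, using the centraliser of $x$ to remove conjugation redundancy, so that each $t_i$ becomes an explicit polynomial over $\mathbb{Z}$ in a bounded number of auxiliary parameters, with $x^3=y^3=1$ automatic. For $z=xy$ the relation $z^c=1$ cannot be expanded ($c$ being arbitrary), but it forces the characteristic polynomial of $z$ to be $t^4-\sigma_1t^3+\sigma_2t^2-\sigma_1t+1$ with $(\sigma_1,\sigma_2)$ in the finite set
\[S_c=\big\{\,(\mu+\mu^{-1}+\nu+\nu^{-1},\ 2+(\mu+\mu^{-1})(\nu+\nu^{-1}))\ :\ \mu^c=\nu^c=1\,\big\}\]
of algebraic integers reduced modulo $p$ --- the only place $c$ enters. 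Adjoining to the parametrisation ideal the equations asserting that $z$ has this characteristic polynomial and eliminating the auxiliary parameters by a Gr\"obner basis computation, I expect to obtain an ideal $J\subseteq\mathbb{F}_p[t_1,\dots,t_N,\sigma_1,\sigma_2]$ finite over $\mathbb{F}_p[\sigma_1,\sigma_2]$; specialising $(\sigma_1,\sigma_2)$ to each point of $S_c$ then leaves a zero-dimensional ideal in $\mathbb{F}_p[t_1,\dots,t_N]$, so the trace tuple of $(x,y)$ is one of finitely many points, each defined over a fixed finite extension of $\mathbb{F}_p$ but generating $\mathbb{F}_{p^r}$. Thus only finitely many $r$ occur.

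\textbf{The main obstacle.}
The crux is the finiteness of $J$ over the $(\sigma_1,\sigma_2)$-plane. Because the remaining cases force $p\mid 6c$, Weil's rigidity formula is unavailable --- $\dim H^1(T_{3,3,c},\mathrm{Ad}\circ\rho)$ need not vanish --- so the variety of absolutely irreducible $(3,3,c)$-pairs in $\mathrm{Sp}_4$ may well be positive-dimensional; what must be proved is that the trace functions $t_i$ are constant along each of its components, i.e.\ that such families lie in the fibres of the quotient map. Verifying this with Gr\"obner bases is where essentially all the work lies: one must impose absolute irreducibility explicitly (e.g.\ by localising where a suitable Gram-type $16\times 16$ minor is invertible) so as to excise the reducible locus, along which fibre dimension jumps; one must handle $p\mid c$, where $z=xy$ can be non-semisimple so that its $\mathrm{Sp}_4$-orbit is not pinned down by $\sigma_1,\sigma_2$ alone and further trace coordinates are needed; one must treat separately the unipotent shapes at $p=3$ and the identification $\mathrm{PSp}_4=\mathrm{Sp}_4$ at $p=2$; and one must carry $\sigma_1,\sigma_2$ through the elimination symbolically so that a single computation covers all $c$, while keeping the parameter count small enough to be feasible.
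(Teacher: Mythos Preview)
Your reductions for (ii) and (iii) are fine and match the paper's almost verbatim (the paper phrases the lifting step as ``$\mathrm{Sp}_4(\mathbb F)$ has no subgroup of index $2$'', which is equivalent to your quasisimplicity remark).

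For (i) your outline is in the right spirit but stops short of a proof precisely at the point you yourself flag as ``the main obstacle''. You reduce to showing that the trace tuple of an absolutely irreducible $(3,3)$-pair in $\mathrm{Sp}_4$ lies in a variety finite over the $(\sigma_1,\sigma_2)$-plane, and then say you \emph{expect} this to fall out of a Gr\"obner elimination with auxiliary parameters, irreducibility localisations, separate treatment of $p=2,3$, etc. None of that is carried out, and there is no argument for why the elimination should terminate in something zero-dimensional over $\mathbb F_p[\sigma_1,\sigma_2]$; as you note, the representation variety itself can be positive-dimensional here, so this finiteness is the entire content of the theorem.

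The paper resolves exactly this obstacle, but by a different and much more controlled route than brute-force elimination. Rather than parametrise $y$ over its class and eliminate, it works directly with trace functions. Using Procesi's fundamental trace identity for $5$-tuples in $M_4$ (specialised three ways), it first shows that for any pair $g_1,g_2\in\mathrm{GL}_4$ of order $3$ the character field is generated by the finite set $\{\mathrm{tr}(w(g_1,g_2)):|w|\le 4\}\cup\{\chi_2(w(g_1,g_2)):|w|\le 2\}$. Then, assuming $g_1,g_2\in\mathrm{Sp}_4$ and $\langle g_1,g_2\rangle$ absolutely irreducible, a short case analysis (on the minimal polynomial of $g_1$ and on the position of a $2$-dimensional eigenspace of $g_2$) produces explicit $4\times 4$ matrix shapes for $g_1,g_2$, and a small Gr\"obner computation in each case collapses the generating set to just \emph{three} elements $t_{12}=\mathrm{tr}(g_1g_2)$, $c_{12}=\chi_2(g_1g_2)$, $t_{1,-2}=\mathrm{tr}(g_1g_2^{-1})$, together with a single explicit cubic relation $\rho(t_{12},c_{12},t_{1,-2})=0$. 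The order-$c$ condition on $g_1g_2$ then forces $t_{12}$ and $c_{12}$ to be roots of fixed integer polynomials $\Theta_c$, $\Delta_c$ (this is your set $S_c$, rephrased), so the character field is a quotient of the visibly finite $\mathbb F_p$-algebra $\mathbb F_p[X,Y,Z]/(\Theta_c(X),\Delta_c(Y),\rho(X,Y,Z))$.

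In short: your plan would need the finiteness-over-$(\sigma_1,\sigma_2)$ to be \emph{proved}, not expected; the paper supplies that proof, and its mechanism is Procesi's trace identities reducing the character field to three generators with one relation, not a generic parameter elimination.
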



Unless otherwise stated,  in this section $\mathbb{F}$ denotes an arbitrary field and we denote  by $\mathbb{F}_q$ a finite field of order  a prime power $q=p^r$. Let $n$ be a positive integer. Recall that for a subgroup $H$ of ${\rm GL}_n(\mathbb{F})$,  the character field ${\rm cf}(H)$ of $H$ is   the subfield of $\mathbb{F}$ generated by $\{{\rm tr}(h): h \in H\}$. \\


The remainder of the paper is dedicated to the proof of Theorem \ref{t:33sp}. Since our methods differ somewhat from the preceding material, let us give a brief outline. If $g_1$, $g_2 \in {\rm Sp}_{4}(q)$, the character field ${\rm cf}(\left<g_1,g_2\right>)$ is the subfield of $\mathbb{F}_{q}$ generated by traces of all words in $g_1$ and $g_2$. We view the traces of words as an elements of a polynomial ring, and use the further assumption $g_1^3 = g_2^3 = 1$ to derive relations between these polynomials which hold in ${\rm cf}(\left<g_1,g_2\right>)$. By assuming also that $\left<g_1,g_2\right>$ is absolutely irreducible, we ultimately show in Proposition \ref{p:chf} below that ${\rm cf}(\left<g_1,g_2\right>)$ is generated by the values of just \emph{three} such polynomials and find also a relation between these three polynomials. Hence ${\rm cf}(\left<g_1,g_2\right>)$ is a quotient of a finitely-generated polynomial ring. By using the final assumption that $g_1g_2$ has order dividing a given positive integer $c$, we derive two further relations for the generators (see Lemma \ref{l:chi} below). This enables us to   show that ${\rm cf}(\left<g_1,g_2\right>)$ is an image of a particular finite quotient of the polynomial ring, which does not depend on $q$. In particular the size of ${\rm cf}(\left<g_1,g_2\right>)$ is bounded. Since we also show below that ${\rm cf}({\rm Sp}_{4}(q)) = \mathbb{F}_{q}$, the conclusion of Theorem \ref{t:33sp} then follows.

\begin{lem}\label{l:chf}
If $G_0={\rm Sp}_4(q)={\rm Sp}_4(p^r)$ then ${\rm cf}(G_0)=\mathbb{F}_q$. 
\end{lem}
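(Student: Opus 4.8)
The plan is to show that the subfield of $\mathbb{F}_q$ generated by all traces $\{\operatorname{tr}(g) : g \in {\rm Sp}_4(q)\}$ is all of $\mathbb{F}_q$. Since the character field is certainly a subfield of $\mathbb{F}_q$, and the subfields of $\mathbb{F}_q = \mathbb{F}_{p^r}$ form a lattice indexed by divisors of $r$, it suffices to exhibit, for each proper subfield $\mathbb{F}_{q_0} \subsetneq \mathbb{F}_q$, an element of ${\rm Sp}_4(q)$ whose trace lies outside $\mathbb{F}_{q_0}$. Equivalently, it suffices to produce a single element $g \in {\rm Sp}_4(q)$ with $\operatorname{tr}(g)$ a field generator of $\mathbb{F}_q$ over $\mathbb{F}_p$, i.e. $\mathbb{F}_p(\operatorname{tr}(g)) = \mathbb{F}_q$.

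First I would reduce to producing diagonalisable (or block-diagonal) symplectic elements with prescribed eigenvalue data. Recall that ${\rm Sp}_4$ contains a maximal torus isomorphic to $\mathbb{F}_q^\times \times \mathbb{F}_q^\times$ (acting on the natural module with eigenvalues $\lambda, \mu, \lambda^{-1}, \mu^{-1}$ on a symplectic basis), so any element of the form $\operatorname{diag}(\lambda, 1, \lambda^{-1}, 1)$ with $\lambda \in \mathbb{F}_q^\times$ lies in ${\rm Sp}_4(q)$, and has trace $\lambda + \lambda^{-1} + 2$. Thus it suffices to show that the set $\{\lambda + \lambda^{-1} : \lambda \in \mathbb{F}_q^\times\}$ generates $\mathbb{F}_q$ as a field over $\mathbb{F}_p$. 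The subfield $E = \mathbb{F}_p(\{\lambda + \lambda^{-1} : \lambda \in \mathbb{F}_q^\times\})$ has the property that every $\lambda \in \mathbb{F}_q^\times$ satisfies the quadratic $t^2 - (\lambda+\lambda^{-1})t + 1 = 0$ over $E$, so $\mathbb{F}_q = E(\lambda)$ has degree at most $2$ over $E$; hence $[\mathbb{F}_q : E] \in \{1,2\}$. To rule out the degree-$2$ case I would argue that if $[\mathbb{F}_q:E]=2$ then $\mathbb{F}_q = \mathbb{F}_{q_0^2}$ with $E = \mathbb{F}_{q_0}$, and every element of $\mathbb{F}_q^\times$ would have to be a root of a monic quadratic over $\mathbb{F}_{q_0}$ with constant term $1$; but for $\lambda$ a generator of $\mathbb{F}_q^\times$, the minimal polynomial of $\lambda$ over $\mathbb{F}_{q_0}$ is $t^2 - \operatorname{Tr}_{\mathbb{F}_q/\mathbb{F}_{q_0}}(\lambda) t + N_{\mathbb{F}_q/\mathbb{F}_{q_0}}(\lambda)$, and choosing $\lambda$ with norm $\neq 1$ (possible since the norm map $\mathbb{F}_q^\times \to \mathbb{F}_{q_0}^\times$ is surjective and $|\mathbb{F}_{q_0}^\times| > 1$ once $q_0 > 2$) gives $\lambda + \lambda^{-1} \notin \mathbb{F}_{q_0}$, a contradiction; the residual small cases $q_0 = 2$ (so $q = 4$) and any $q \le$ small bound can be checked directly, e.g. for $q=4$ one has $\lambda + \lambda^{-1} = \lambda + \lambda^2$ ranging over all of $\mathbb{F}_4$ as $\lambda$ ranges over $\mathbb{F}_4^\times \cup\{0\}$ appropriately, or simply exhibit $\lambda$ a generator with $\lambda+\lambda^{-1}$ generating $\mathbb{F}_4$.

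The main obstacle is precisely this last step: ruling out that the traces of torus elements all land in the index-$2$ subfield. The cleanest route, which I would adopt to avoid fiddly case analysis, is to instead use a transvection-type or Singer-type element. A slicker alternative: pick an irreducible polynomial of degree $4$ over $\mathbb{F}_p$ that is self-reciprocal (i.e. $t^4 f(1/t) = f(t)$ up to a unit), so that its companion matrix preserves a symplectic form and hence lies in ${\rm Sp}_4(p)$ with trace equal to (minus) the coefficient of $t^3$; more to the point, over $\mathbb{F}_q$ one can take a degree-$2$ self-reciprocal irreducible polynomial generating $\mathbb{F}_{q^2}$, realise $\mathbb{F}_{q^2}^\times$ inside ${\rm Sp}_4(q)$ via the natural $2$-dimensional $\mathbb{F}_{q^2}$-structure on the $4$-dimensional $\mathbb{F}_q$-space, and take a generator $\zeta$ of a suitable cyclic subgroup so that $\operatorname{tr}_{\mathbb{F}_q}(g) = \operatorname{Tr}_{\mathbb{F}_{q^2}/\mathbb{F}_q}(\zeta) + \operatorname{Tr}_{\mathbb{F}_{q^2}/\mathbb{F}_q}(\zeta^{-1})$, which for a well-chosen $\zeta$ visibly generates $\mathbb{F}_q$. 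In writing this up I would present the torus argument as the primary one, phrase the non-degeneracy claim as ``$\{\lambda + \lambda^{-1} : \lambda \in \mathbb{F}_q^\times\}$ is not contained in any proper subfield'', and dispatch the $[\mathbb{F}_q:E] = 2$ possibility by the norm argument above together with explicit verification for $q \in \{4\}$ (and noting $q$ prime is immediate since then $\mathbb{F}_q$ has no proper subfield other than $\mathbb{F}_p = \mathbb{F}_q$). This gives $\mathbb{F}_p(\operatorname{tr}(g)) = \mathbb{F}_q$ for a suitable $g \in {\rm Sp}_4(q)$, hence ${\rm cf}({\rm Sp}_4(q)) = \mathbb{F}_q$.
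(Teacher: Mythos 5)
Your overall strategy---produce elements of ${\rm Sp}_4(q)$ whose traces generate $\mathbb{F}_q$ over $\mathbb{F}_p$---is the same as the paper's, which simply writes down one explicit (non-diagonal) element $g$, defined by its action on a symplectic basis, with ${\rm tr}(g)=\omega$ a generator of $\mathbb{F}_q^{\ast}$, so that $\mathbb{F}_p(\omega)=\mathbb{F}_q$ uniformly in $q$. However, your primary (split-torus) argument has a genuine gap at $q=4$, and your proposed patch for that case is false. For every $\lambda\in\mathbb{F}_4^{\times}$ one has $\lambda^{-1}=\lambda^{2}$, so $\lambda+\lambda^{-1}=\lambda+\lambda^{2}=\operatorname{Tr}_{\mathbb{F}_4/\mathbb{F}_2}(\lambda)\in\mathbb{F}_2$; thus $\lambda+\lambda^{2}$ does not range over $\mathbb{F}_4$, and no generator $\lambda$ has $\lambda+\lambda^{-1}$ generating $\mathbb{F}_4$. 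Worse, this is not a case that can be ``checked directly'' within your framework: every element $\operatorname{diag}(\lambda,\mu,\lambda^{-1},\mu^{-1})$ of the split torus of ${\rm Sp}_4(4)$ has trace $(\lambda+\lambda^{-1})+(\mu+\mu^{-1})\in\mathbb{F}_2$, so no such element can witness ${\rm cf}({\rm Sp}_4(4))=\mathbb{F}_4$. (For the record, the rest of your argument is sound: if $r$ is odd then $[\mathbb{F}_q:E]\leq 2$ forces $E=\mathbb{F}_q$, and if $q=q_0^{2}$ with $q_0>2$ your norm argument applies; $q=4$ is the unique genuine exception to the split-torus claim.)

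Your fallback---embedding $\mathbb{F}_{q^2}^{\times}$ in the Levi ${\rm GL}_2(q)\leq{\rm Sp}_4(q)$, giving traces $\operatorname{Tr}_{\mathbb{F}_{q^2}/\mathbb{F}_q}(\zeta)+\operatorname{Tr}_{\mathbb{F}_{q^2}/\mathbb{F}_q}(\zeta^{-1})$---can be made to work (for instance, for $q=4$ and $\zeta$ a generator of $\mathbb{F}_{16}^{\times}$ one computes $\zeta+\zeta^{4}+\zeta^{-1}+\zeta^{-4}=\zeta^{2}+\zeta\notin\mathbb{F}_2$), but in your write-up this is only asserted (``visibly generates''), and it is exactly at $q=4$, the one case where you need it, that it is least visible and requires an actual computation or counting argument. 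As it stands the proof is therefore incomplete at $q=4$. The paper's proof avoids all of this case analysis by exhibiting a single element whose trace is literally a generator $\omega$ of the cyclic group $\mathbb{F}_q^{\ast}$, which generates $\mathbb{F}_q$ over $\mathbb{F}_p$ for every $q$ at once; if you want to keep your softer approach, you should either adopt such an explicit element or supply the missing verification for the ${\rm GL}_2$-Levi element at small $q$.
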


\begin{proof}
Let $V$ be the natural module for ${\rm Sp}_4(\mathbb{F}_q)$ and let $\mathcal{B}=(e_1,f_1,e_2,f_2)$ be a standard basis of $V$. That is, $(e_i,f_i)=1$, $(e_i,e_i)=(f_i,f_i)=0$ for $1\leq i\leq 2$, $(e_i,f_j)=(e_i,e_j)=(f_i,f_j)=0$ for  $i\neq j$ such that $1\leq i, j\leq 2$. Here $(,): V\times V \rightarrow \mathbb{F}_q$ denotes the  symplectic form associated to $V$.  \\
Let $\omega$ be a generator of the cyclic group $\mathbb{F}_q^{\ast}$ and let $g$ be the element of ${\rm GL}_4(q)$ defined by:\\

$\begin{array}{ll}
g(e_1)=\omega e_1-f_2\\
g(f_1)=-\omega e_1+e_2+f_2\\
g(e_2)=e_1\\
g(f_2)=e_1+f_1-\omega e_2.
\end{array}$

It is easy to check that $g$ is in fact an element of $G_0={\rm Sp}_4(q)$ and that ${\rm tr}(g)=\omega$.  It follows that ${\rm cf}(G_0)=\mathbb{F}_q$ as claimed. 
\end{proof}

Let ${\rm M}_n(R)$ denote the algebra of $n\times n$ matrices over a commutative ring $R$.   Given a ring homomorphism $\alpha: R\rightarrow S$, note that  $\alpha$ induces a ring homomorphism ${\rm M}_n(R)\rightarrow {\rm M}_n(S)$ where $\alpha(A)_{ij}=\alpha(A_{ij})$ for all $A\in {\rm M}_n(R)$. We recall the following elementary result, see for example \cite[XIV, \S3]{Lang}.

\begin{lem}\label{l:mapchp}
Let $\alpha: R\rightarrow S$ be  a ring homomorphism and consider the induced ring homomorphism $\alpha: {\rm M}_n(R)\rightarrow {\rm M}_n(S)$. Let $A\in {\rm M}_n(R)$, $B=\alpha(A)$, and  $p_A$ and $p_B$ be respectively the characteristic polynomials of $A$ and $B$. Then $p_B(T)=\alpha(p_A)(T)$ where $\alpha(p_A)$ is the polynomial obtained from $p_A$ by applying $\alpha$ to the coefficients of $p_A$. 
\end{lem}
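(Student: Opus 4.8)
The statement to be proved is the functoriality of the characteristic polynomial under a ring homomorphism: if $\alpha\colon R\to S$ is a ring homomorphism with the induced map on matrix rings ${\rm M}_n(R)\to {\rm M}_n(S)$, then for $A\in {\rm M}_n(R)$ and $B=\alpha(A)$ one has $p_B(T)=\alpha(p_A)(T)$. My plan is to unwind both sides directly from the determinantal definition of the characteristic polynomial and exploit that $\alpha$ is a ring homomorphism together with the polynomial nature of the determinant.

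First I would fix the definition $p_A(T)=\det(T\,I_n-A)\in R[T]$ and $p_B(T)=\det(T\,I_n-B)\in S[T]$. The natural framework is to pass to polynomial rings: extend $\alpha$ to a ring homomorphism $\alpha_T\colon R[T]\to S[T]$ by acting on coefficients and fixing $T$ (this is the unique homomorphism with $\alpha_T|_R=\alpha$ and $\alpha_T(T)=T$). This $\alpha_T$ induces a homomorphism on matrix rings ${\rm M}_n(R[T])\to {\rm M}_n(S[T])$, again entrywise. The key observation is that the matrix $T\,I_n-A\in {\rm M}_n(R[T])$ is sent under this induced map to $T\,I_n-B$, because $\alpha_T$ fixes $T$ (so the diagonal $T$'s are preserved) and $\alpha_T(A_{ij})=\alpha(A_{ij})=B_{ij}$ on the entries of $A$.

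The heart of the argument is then the compatibility of the determinant with ring homomorphisms. I would invoke (or prove in one line) the fact that for any ring homomorphism $\beta\colon R'\to S'$ and any matrix $M\in {\rm M}_n(R')$, one has $\beta(\det M)=\det(\beta(M))$, where on the right $\beta$ acts entrywise. This is immediate from the Leibniz expansion $\det M=\sum_{\sigma\in \mathfrak{S}_n}{\rm sgn}(\sigma)\prod_{i}M_{i,\sigma(i)}$: since $\beta$ is a ring homomorphism it commutes with the finite sums and products and sends $\pm 1$ to $\pm 1$, so it maps this expression for $\det M$ to the corresponding expression for $\det(\beta M)$. Applying this with $\beta=\alpha_T$ and $M=T\,I_n-A$ gives
\[
\alpha_T\bigl(\det(T\,I_n-A)\bigr)=\det\bigl(\alpha_T(T\,I_n-A)\bigr)=\det(T\,I_n-B),
\]
that is $\alpha_T(p_A)(T)=p_B(T)$. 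Finally, since $\alpha_T$ acts on a polynomial in $R[T]$ precisely by applying $\alpha$ to each coefficient and fixing $T$, the left-hand side is exactly $\alpha(p_A)(T)$ in the notation of the statement, completing the proof.

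\textbf{Main obstacle.} There is no serious obstacle here; this is an elementary verification. The only point requiring a little care is to state cleanly the functoriality of the determinant under a ring homomorphism and to be explicit that the extension $\alpha_T$ fixes the indeterminate $T$, so that the diagonal entries of $T\,I_n-A$ are preserved and only the coefficients coming from $A$ are transported by $\alpha$. Since the excerpt itself cites \cite[XIV, \S3]{Lang} for this elementary fact, in the write-up I would either give the short Leibniz-expansion argument above or simply defer to that reference.
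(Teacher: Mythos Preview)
Your argument is correct and is precisely the standard proof via the Leibniz expansion and the extension $\alpha_T\colon R[T]\to S[T]$; the paper itself gives no proof of this lemma, merely citing \cite[XIV, \S3]{Lang}, so your write-up supplies exactly the elementary verification the paper defers to the reference.
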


Given $M\in {\rm M}_4(R)$, we let $p_M(T)$ be its characteristic polynomial. Write 
\begin{equation}\label{e:charpoly}
p_M(T)=T^4-\chi_3(M)T^3+\chi_2(M)T^2-\chi_1(M)T+\chi_0(M)
\end{equation}
where $\chi_i(M)\in R$ for $0\leq i \leq 3$. Note that $\chi_3(M)={\rm tr}(M)$ and
 ${\rm tr}(M^2)={\rm tr}(M)^2-2\chi_2(M)$.

\begin{lem}
Let $M_1,M_2,M_3\in {\rm GL}_4(\mathbb{F})$ and define $M_{-l}:=M_l^{-1}$ for $l\in \{1,2,3\}$. For a positive integer $k$ and a $k$-tuple $(i_1,\dots, i_k)$ with $i_j\in\{\pm1,\pm2,\pm 3\}$ for $1\leq j\leq k$, set 
$$t_{i_1\dots i_k}:={\rm tr}(M_{i_1}\dots M_{i_k})$$ and 
$$c_{i_1\dots i_k}:=\chi_2(M_{i_1}\dots M_{i_k}).$$ Then the following assertions hold.
\begin{enumerate}[(i)]
\item
\begin{equation}\label{e:procesi1}
\begin{array}{lll}
0 & = &  t_{12123}+t_{21213}+t_{11223}+t_{12213}+t_{21123}+t_{22113}\\
& & -t_1(t_{1223}+t_{2123}+t_{2213})-t_2(t_{1123}+t_{1213}+t_{2113})+(t_1t_2-t_{12})(t_{123}+t_{213})\\
& & +c_2t_{113}+c_1t_{223}+(t_{12}t_2-t_{122}-t_1c_2)t_{13}+(t_{12}t_1-t_{112}-t_2c_1)t_{23}\\
& & -(t_{1122}-t_{112}t_2-t_{122}t_1+t_{12}t_1t_2-c_1c_2-c_{12})t_3.
\end{array}
\end{equation}
\item \begin{equation}\label{e:procesi2}
\begin{array}{lll}
0 & = & t_{-12213}+t_{2-1213}+t_{-12123}+t_{12-123}+t_{212-13}+t_{122-13}\\
&  & -(t_{12-13}+t_{-1213})t_2-(t_{2123}+t_{1223}+t_{2213})t_{-1}-(t_{-1223}+t_{22-13}+t_{2-123})t_1\\
&&-(t_{123}+t_{213})(t_{-12}-t_2t_{-1})-(t_{-123}+t_{2-13})(t_{12}-t_2t_1)+t_{223}(t_{-1}t_1+2)\\
& & -(t_{-122}-t_{-12}t_2+c_2t_{-1})t_{13}-(t_{122}-t_{12}t_2)t_{-1}-(t_{-122}-t_{-12}t_2)t_1-c_2(t_{-1}t_1+2)t_3. 
\end{array}
\end{equation}
\item\begin{equation}\label{e:procesi3}
\begin{array}{lll}
0 & = & t_{21-2-13}+t_{-212-13}+t_{12-1-23}+t_{-1-2123}+t_{-2-1213}+t_{-121-23}+t_{1-2-123}+t_{2-1-213}\\
&&-(t_{2-1-23}+t_{-2-123})t_1-(t_{-1-213}+t_{1-2-13})t_2-(t_{21-23}+t_{-2123})t_{-1}-(t_{-1213}+t_{12-13})t_{-2}\\
& & -(t_{-1-23}+t_{-2-12})(t_{12}-t_1t_2)-(t_{213}+t_{123})(t_{-2-1}-t_{-1}t_{-2})\\
& & -(t_{-213}+t_{1-23})(t_{-12}-t_2t_{-1})-(t_{-123}+t_{2-13})(t_{-21}-t_1t_{-2})\\
& & +(t_{-2-1}t_1+t_{-21}t_{-1}-(t_1t_{-1}+2)t_{-2})t_{23}+(t_{-12}t_1+t_{12}t_{-1}-(t_1t_{-1}+2)t_2)t_{-23}\\
& & +(t_{-2-1}t_2+t_{-12}t_{-2}-(t_2t_{-2}+2)t_{-1})t_{13}+(t_{-21}t_2+t_{12}t_{-2}-(t_2t_{-2}+2)t_1)t_{-13}\\
&& - (t_{-212-1}+t_{-2-121}-(t_{-2-1}-t_{-1}t_{-2})(t_{12}-t_1t_2)-(t_{-21}-t_{-2}t_1)(t_{-12}-t_2t_{-1}))t_3\\
&&+((t_1t_{-1}-2)(t_2t_{-2}-2)-4)t_3.
\end{array}
\end{equation}
\end{enumerate}
\end{lem}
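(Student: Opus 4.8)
The identities \eqref{e:procesi1}--\eqref{e:procesi3} are all instances of a single trace identity for $4\times 4$ matrices, so the plan is to produce one master identity and then specialise. The starting point is the observation that for any $A\in {\rm M}_4(\mathbb{F})$ the Cayley--Hamilton theorem gives
\[
A^4 = \chi_3(A)A^3 - \chi_2(A)A^2 + \chi_1(A)A - \chi_0(A)\cdot I,
\]
and, more usefully, its fully polarised form: multilinearising $p_A$ in four matrix variables $A_1,A_2,A_3,A_4$ yields the \emph{fundamental trace identity} asserting that a certain signed sum over the symmetric group $\mathfrak{S}_5$ (acting on the five symbols $A_1,A_2,A_3,A_4$ and an extra ``slot'') of products of traces vanishes identically. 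Concretely, writing $\sum_{\sigma\in\mathfrak{S}_5}\operatorname{sgn}(\sigma)\,T_\sigma(A_1,\dots,A_4,B)=0$, where $T_\sigma$ is the product of trace-monomials read off from the cycle type of $\sigma$, one gets an identity that holds over any commutative ring for $4\times4$ matrices. I would record this once and for all (citing Procesi's work on the trace identities of $n\times n$ matrices, which is presumably where the labels $t,c$ and the structure of the right-hand sides come from), expressing everything in terms of the $t_{i_1\dots i_k}={\rm tr}(M_{i_1}\cdots M_{i_k})$ and the $c_{i_1\dots i_k}=\chi_2(M_{i_1}\cdots M_{i_k})$; note that $\chi_2$ itself is a polynomial in traces via ${\rm tr}(M^2)={\rm tr}(M)^2-2\chi_2(M)$ together with the analogous relation for products, so the $c$'s are legitimate abbreviations rather than new quantities.

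\textbf{Carrying out the specialisations.} With the master identity in hand, part (i) is obtained by substituting $(A_1,A_2,A_3,A_4)=(M_1,M_2,M_1,M_2)$ and setting $B=M_3$ (so that the pending fifth slot carries the trailing $M_3$). The terms on the left of \eqref{e:procesi1}, namely $t_{12123},t_{21213},t_{11223},t_{12213},t_{21123},t_{22113}$, are exactly the six trace-monomials coming from the $5$-cycles of $\mathfrak{S}_5$ after this substitution (the repetitions $M_1=M_1$, $M_2=M_2$ collapsing $5!$ terms down to these orbit representatives with the displayed multiplicities absorbed), and the remaining terms, organised by decreasing length, come from the shorter cycle types; the coefficients $c_1=\chi_2(M_1)$, $c_2=\chi_2(M_2)$, $c_{12}=\chi_2(M_1M_2)$ appear precisely from the $2{+}2$ and $2{+}1{+}1$ cycle types. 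For part (ii) I would instead substitute $(A_1,A_2,A_3,A_4)=(M_1^{-1},M_2,M_2,M_1)$, i.e. replace one $M_1$ by $M_{-1}=M_1^{-1}$ in the pattern of (i), keeping $B=M_3$; the bookkeeping is identical, and the appearance of the constant $2$ in the combinations like $t_{223}(t_{-1}t_1+2)$ reflects the fact that ${\rm tr}(M_{-1}M_1)={\rm tr}(I)=4$ and $\chi_2(M_{-1}M_1)=\chi_2(I)=6$ cause several trace-monomials to degenerate and merge. For part (iii), substitute $(A_1,A_2,A_3,A_4)=(M_1^{-1},M_2^{-1},M_1,M_2)$ with $B=M_3$; the left-hand side now has eight $5$-cycle terms because no two of $M_{-1},M_{-2},M_1,M_2$ coincide, and the extra closing line $((t_1t_{-1}-2)(t_2t_{-2}-2)-4)t_3$ comes from the $1{+}1{+}1{+}1{+}1$ and $2{+}1{+}1{+}1$ contributions where two inverse pairs collapse simultaneously.

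\textbf{Expected main obstacle.} The conceptual content is light; the difficulty is entirely combinatorial-computational bookkeeping. The main obstacle will be verifying that after each substitution the $120$ signed terms of the $\mathfrak{S}_5$ identity collect, with correct signs and multiplicities, into exactly the right-hand sides displayed in \eqref{e:procesi1}--\eqref{e:procesi3}, including the cancellations that produce the $+2$ and $-4$ constants in (ii) and (iii). In practice I would organise this by cycle type of $\sigma\in\mathfrak{S}_5$ (types $5$, $4{+}1$, $3{+}2$, $3{+}1{+}1$, $2{+}2{+}1$, $2{+}1{+}1{+}1$, $1^5$), handle the degeneracies ${\rm tr}(M_iM_i^{-1})=4$, $\chi_2(M_iM_i^{-1})=6$ uniformly, and then check the final polynomial identities either by hand for (i) or, for (ii) and (iii), by a short symbolic computation in a polynomial ring — consistent with the paper's declared use of Gröbner-basis and commutative-algebra tools. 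An alternative, slightly cleaner route would be to prove (i) directly from Cayley--Hamilton applied to $M_1M_2$ (multiplying $p_{M_1M_2}(M_1M_2)=0$ by suitable words and taking traces), and then obtain (ii) and (iii) from (i) by the substitutions $M_1\mapsto M_1^{-1}$, etc., using ${\rm tr}(M_1^{-1}w)$ relations coming from $p_{M_1}(M_1)=0$; I would present whichever of the two is shorter to write out in full.
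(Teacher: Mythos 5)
Your proposal follows essentially the same route as the paper: both invoke Procesi's fundamental trace identity $\sum_{\sigma\in{\rm Sym}_5}{\rm sgn}(\sigma)\,{\rm tr}_\sigma(Z_1,\dots,Z_5)=0$ for $4\times 4$ matrices and obtain (i)--(iii) by the substitutions $(Z_1,\dots,Z_5)=(M_1,M_2,M_3,M_1,M_2)$, $(M_1,M_2,M_3,M_1^{-1},M_2)$ and $(M_1,M_2,M_3,M_1^{-1},M_2^{-1})$ (yours are the same multisets in a different order), converting ${\rm tr}(M^2)$ into ${\rm tr}(M)^2-2\chi_2(M)$ to produce the $c$-terms. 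The one point you pass over too quickly is the validity of the master identity in arbitrary characteristic: Procesi's cited theorem is a characteristic-zero statement, and the paper spends the second half of its proof on exactly this reduction, forming the generic ring $R=\mathbb{Z}[X^l_{ij},d_l]/(\det(X^l)d_l-1)$ (an integral domain whose fraction field has characteristic $0$, with the $d_l$ adjoined precisely so that the generic matrices are invertible), verifying the identities there, and transporting them to $\mathbb{F}$ via a specialisation homomorphism together with the compatibility of characteristic polynomials with ring maps (Lemma \ref{l:mapchp}). Your bare assertion that the polarised identity ``holds over any commutative ring'' is in fact true, but as written it is unsupported by the citation; to close the gap you would need either this specialisation argument or a characteristic-free proof of the fundamental identity (e.g.\ via the vanishing of the antisymmetriser on the fifth tensor power of a rank-$4$ module, $\Lambda^5(\mathbb{F}^4)=0$). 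Apart from that, your bookkeeping plan by cycle type, and the observation that the constants $2$ and $4$ in (ii)--(iii) arise from ${\rm tr}(M_lM_l^{-1})=4$ and $\chi_2(M_lM_l^{-1})=6$, matches what the paper's computation amounts to; the alternative route you sketch via Cayley--Hamilton applied to $M_1M_2$ is not what the paper does and would need its own verification.
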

    
\begin{proof}
Assume first that     ${\rm char}(\mathbb{F})=0$. We use a special case of a result of Procesi \cite[Theorem 4.3]{Procesi}. 
Let $Z_1,\dots, Z_5\in {\rm M}_{4}(\mathbb{F})$. Let $\sigma \in {\rm Sym}_5$. Write  $$\sigma=(\sigma_1,\dots, \sigma_{r_1})\dots (\sigma_{r_i},\dots,\sigma_5)$$
 a cycle decomposition of $\sigma$ into disjoint cycles  including trivial cycles.  Set $${\rm tr}_{\sigma}(Z_1,\dots,Z_5)={\rm tr}(Z_{\sigma_1}\dots Z_{\sigma_{r_1}})\dots {\rm tr}(Z_{\sigma_{r_i}}\dots Z_{\sigma_{5}}).$$ Then $$\sum_{\sigma\in {\rm Sym}_5} {\rm sgn}(\sigma){\rm tr}_{\sigma}(Z_1,\dots,Z_5)=0.$$
 
 Let $Z_1=M_1$, $Z_2=M_2$ and $Z_3=M_3$. The result follows by setting $Z_4=M_1$ and $Z_5=M_2$ for Equation (\ref{e:procesi1}), $Z_4=M_1^{-1}$ and $Z_5=M_2$ for Equation (\ref{e:procesi2}), and $Z_4=M_1^{-1}$ and $Z_5=M_2^{-1}$ for Equation (\ref{e:procesi3}), using in the three cases that ${\rm tr}(M^2)={\rm tr}(M)^2-2\chi_2(M)$ for all $M\in {\rm GL}_4(\mathbb{F})$.\\
 
 Suppose now that $\mathbb{F}$ is arbitrary.  Let $X_{ij}^l$, $d_l$ be indeterminates where $1\leq i, j \leq 4$ and $1\leq l\leq 3$. Then, for $1\leq l \leq 3$, $p_l={\rm det}(X_{ij}^l)d_l-1$ is an irreducible polynomial of $\mathbb{Z}[X_{ij}^l, d_l: 1\leq i, j \leq 4, 1\leq l \leq 3]$, and the ideal $I=(p_1,p_2,p_3)$ of  $$\mathbb{Z}[X_{ij}^l, d_l: 1\leq i, j \leq 4, 1\leq l \leq 3]$$ generated by $\{p_1,p_2,p_3\}$ is prime. Hence $$R=\mathbb{Z}[X_{ij}^l, d_l: 1\leq i, j \leq 4, 1\leq l \leq 3]/I$$ is an integral domain. Let $F$ be the field of fractions of $R$, a field of   characteristic 0. 
 Consider the elements $(A_{ij}^1)$, $(A_{ij}^2)$ and $(A_{ij}^3)$ of ${\rm M}_4(R)$ defined by  $A_{ij}^l=X_{ij}^l+I$ for  $1\leq i,j\leq 4$ and $1\leq l\leq 3$.  Note that 
 $(A_{ij}^1)$, $(A_{ij}^2)$ and $(A_{ij}^3)$ belong to ${\rm GL}_4(F)$.   Since ${\rm char}(F)=0$,  the result holds for the elements $(A_{ij}^1)$, $(A_{ij}^2)$ and $(A_{ij}^3)$. \\
 Now we can choose a ring homomorphism $\alpha: R\rightarrow \mathbb{F}$ such that $\alpha(X_{ij}^l+I)=(M_{l})_{ij}$  for  $1\leq i,j\leq 4$ and $1\leq l\leq 3$. Consider the induced ring homomorphism $\alpha: {\rm M}_4(R)\rightarrow {\rm M}_4(\mathbb{F})$ such that $\alpha(A)_{ij}=\alpha(A_{ij})$ for all $A\in {\rm M}_4(R)$ and $1\leq i, j\leq 4 $.
  In particular, we have 
 $M_l=\alpha(A_{ij}^l)$ for $1\leq l\leq 3$.  Lemma \ref{l:mapchp} together  with the fact that the result holds for   $(A_{ij}^1)$, $(A_{ij}^2)$ and $(A_{ij}^3)$ in ${\rm GL}_4(F)$ now yield the result in general.  \end{proof}

Let $F_2$ be the free group on two generators $x_1$ and $x_2$. Given a word $w$ of $F_{2}$ and a group $G$,  the word map $w: G\times G\rightarrow G$ is defined such that  for every $(g_1,g_2)\in G\times G$, $w(g_1,g_2)$ is obtained from $w(x_1,x_2)$ by substituting the group elements $g_1$, $g_2$ for $x_1$, $x_2$.  For an element $w \in F_2$ we denote by $|w|$ its length with respect to the generators $x_1$ and $x_2$ of $F_2$, whereas given an element  $ g \in G$ we let $|g|$ denote the order of $g$.\\

\begin{prop}\label{p:chfgeneration}
Let $g_1, g_2$ be elements of  ${\rm GL}_4(\mathbb{F})$ of order $3$. The character field of $ \langle g_1,g_2\rangle$ is generated by 
$$X=\{{\rm tr}(w(g_1,g_2)): w \in F_2, |w|\leq 4\}\cup \{\chi_2(w(g_1,g_2)): w \in F_2, |w|\leq 2\}.$$
\end{prop}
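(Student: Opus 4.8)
The plan is to show that the character field $\mathrm{cf}(\langle g_1,g_2\rangle)$, which is a priori generated by the traces of \emph{all} words in $g_1$ and $g_2$, is already generated by the finite set $X$. The key point is that $\langle g_1,g_2\rangle \leqslant {\rm GL}_4(\mathbb{F})$ is a subgroup of a $4$-dimensional matrix algebra, so there is a general mechanism (Cayley--Hamilton plus the trace identities of a $4\times 4$ matrix ring, i.e.\ the fundamental trace identities that appear in the work of Procesi) expressing the trace of a long word as a polynomial in traces of shorter words, with coefficients that are themselves such polynomials. Concretely, I would argue by induction on $|w|$: given a word $w$ of length $\geqslant 5$, write $w = w' M_{i_1}\cdots$ and use the displayed Procesi-type identity (the $n=4$, five-factor case already recorded in the Lemma preceding this Proposition, applied to suitable $Z_1,\dots,Z_5$ chosen among the $g_i^{\pm1}$ and subwords) to rewrite $\mathrm{tr}(w(g_1,g_2))$ in terms of traces of strictly shorter words and of the coefficients $\chi_2$ of words of length $\leqslant 2$. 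The extra leverage here, compared to the generic situation, is the relation $g_i^3 = 1$: since $g_i$ has order $3$, one has $g_i^{-1} = g_i^2 = \chi_3(g_i) g_i^2 - \dots$ — more usefully, $g_i^{-1}$ is an $\mathbb{F}$-linear combination of $1$ and $g_i$ whose coefficients are themselves trace data of $g_i$ (its characteristic polynomial is $T^4 - \chi_3 T^3 + \chi_2 T^2 - \chi_1 T + 1$ and the order-$3$ condition forces $g_i^3=1$, hence $\chi_3(g_i)=\chi_1(g_i)$ and $\chi_2(g_i) = \mathrm{tr}(g_i^{-1}) = \mathrm{tr}(g_i^2)$, collapsing the relevant invariants). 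This lets me eliminate all inverses and all powers $g_i^k$ with $k\geqslant 2$, so that every word may be taken to be an alternating product of $g_1$'s and $g_2$'s, and then the length reduction via the trace identity terminates.

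In more detail, the first step is to record the consequences of $|g_i| = 3$: the characteristic polynomial of $g_i$ is a product of factors $(T-\zeta)$ with $\zeta^3 = 1$, so $\det g_i = 1$ (giving $\chi_0(g_i) = 1$), and writing out $p_{g_i}$ one gets $\chi_1(g_i) = \chi_3(g_i) = \mathrm{tr}(g_i)$ and $\chi_2(g_i) = \mathrm{tr}(g_i^{-1})$, together with $\mathrm{tr}(g_i^{-1}) = \mathrm{tr}(g_i^2) = \mathrm{tr}(g_i)^2 - 2\chi_2(g_i)$, whence $\chi_2(g_i) = \tfrac13(\mathrm{tr}(g_i)^2 - \mathrm{tr}(g_i))$ away from characteristic $3$, and a separate (easy) treatment in characteristic $3$. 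The upshot is that $g_i^{-1}$ equals $g_i^2$ and $g_i^2 = \mathrm{tr}(g_i)\,g_i - g_i - \mathrm{something}$... — more cleanly, from $g_i^3 = 1$ one has $g_i^{-1} = g_i^2$ and $1 + g_i + g_i^2$ behaves like a projector, but what I actually need is only the \emph{module-theoretic} fact that the $\mathbb{F}$-span of $\{1, g_i, g_i^{-1}\}$ is closed under multiplication by $g_i^{\pm1}$, with structure constants lying in $\mathbb{F}[\mathrm{tr}(g_i), \chi_2(g_i)]$. Using this repeatedly, any word in $g_1^{\pm1}, g_2^{\pm1}$ is an $\mathbb{F}$-linear combination of alternating words $g_1 g_2 g_1 g_2 \cdots$ with coefficients in the field generated by $\mathrm{tr}(g_1), \mathrm{tr}(g_2), \chi_2(g_1), \chi_2(g_2)$ — all of which lie in the subfield generated by $X$ (note $\mathrm{tr}(g_i) \in X$ and $\chi_2(g_i) \in X$).

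The second step is the length reduction for alternating words. For an alternating word of length $\geqslant 5$, apply the fundamental five-term trace identity for $4\times4$ matrices — precisely the identity $\sum_{\sigma\in{\rm Sym}_5}{\rm sgn}(\sigma)\,{\rm tr}_\sigma(Z_1,\dots,Z_5)=0$ used in the proof of the preceding Lemma — choosing the $Z_j$ to be the first five letters of the word (an initial segment), so that the term $\mathrm{tr}(Z_1 Z_2 Z_3 Z_4 Z_5 \cdot(\text{tail}))$ appears with coefficient $\pm1$ and every other term has at least one trace factor that is strictly shorter; the $\chi_2$'s that appear are $\chi_2$ of products of at most two letters, hence (using step one to reduce to $w$ of length $\leqslant 2$) lie in the field generated by $X$. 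Iterating, $\mathrm{tr}(w(g_1,g_2))$ for any $w$ reduces to an $X$-polynomial expression in traces of alternating words of length $\leqslant 4$, which are in $X$. Finally, observe that the character field is by definition generated by $\{\mathrm{tr}(w(g_1,g_2)) : w\in F_2\}$ — a word and its value under the word map — and we have shown every such trace lies in the field generated by $X$, while conversely $X$ consists of such traces together with the $\chi_2$-values, and the $\chi_2(w(g_1,g_2))$ for $|w|\leqslant 2$ are polynomials in traces of words of length $\leqslant 4$ via $\chi_2(M) = \tfrac12(\mathrm{tr}(M)^2 - \mathrm{tr}(M^2))$ (char $\neq 2$; handle char $2$ separately using that $\chi_2(M)$ is the second elementary symmetric function of eigenvalues, still a $\mathbb{Z}$-polynomial in power sums $\mathrm{tr}(M), \mathrm{tr}(M^2)$ only after clearing a $2$ — so here one genuinely needs $\chi_2$-values as separate generators, which is exactly why $X$ includes them). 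This gives equality of the two fields.

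\textbf{Main obstacle.} The delicate point is the characteristic-$2$ (and to a lesser extent characteristic-$3$) bookkeeping: $\chi_2(M)$ is \emph{not} a $\mathbb{Z}[1/1]$-polynomial in the power sums $\mathrm{tr}(M), \mathrm{tr}(M^2)$ — the naive formula $\chi_2 = \tfrac12(\mathrm{tr}(M)^2 - \mathrm{tr}(M^2))$ has a $2$ in the denominator — which is precisely the reason the set $X$ must carry the $\chi_2$-values of short words as independent generators rather than deriving them from traces. One must check carefully that no \emph{further} $\chi_2$'s of long words are needed: this follows because the Procesi identities only ever introduce $\chi_2$ of products of the $Z_j$'s, and by choosing the $Z_j$ to be single letters $g_i^{\pm1}$, combined with step one's reduction of inverses, the $\chi_2$-arguments are always words of length $\leqslant 2$. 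Verifying that this truncation is genuinely closed — i.e.\ that the recursion never forces a $\chi_2$ of a length-$3$ or longer word — is the technical heart of the argument, and is where I would be most careful; everything else is a routine, if lengthy, induction on word length.
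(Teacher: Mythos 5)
Your overall strategy (induction on word length, using the five-matrix trace identity of Procesi together with $g_i^3=1$) is the same as the paper's, but the key step as you set it up does not work. You propose to apply $\sum_{\sigma\in{\rm Sym}_5}{\rm sgn}(\sigma)\,{\rm tr}_\sigma(Z_1,\dots,Z_5)=0$ with the $Z_j$ an initial segment of the word (four letters plus the tail), claiming every other term is strictly shorter. It is not: permutations that merely reorder the four single letters can produce cyclically reduced words of the \emph{same} length. Concretely, for $w=\ell_1\ell_2\ell_3\ell_4\cdot(\text{tail})$ with $\ell_1,\ell_3\in\{g_1^{\pm1}\}$, $\ell_2,\ell_4\in\{g_2^{\pm1}\}$, the $5$-cycle sending $(1,2,3,4,5)$ to the arrangement $Z_1Z_4Z_3Z_2Z_5$ contributes ${\rm tr}(\ell_1\ell_4\ell_3\ell_2\cdot\text{tail})$, which is again an alternating word of length $|w|$ with no merging, so the induction does not terminate. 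Also, with five \emph{distinct} $Z_j$ no term ever involves a square, so in fact no $\chi_2$'s appear at all under your choice; your discussion of why only $\chi_2$'s of length-$\leq 2$ words arise is therefore moot, and the real question (how to force every competing term to shorten) is left unanswered. The paper's way around this is precisely to \emph{repeat} matrices: it takes $Z_1=M_1=g_1$, $Z_2=M_2=g_2$, $Z_3=M_3=\text{tail}$ and then $Z_4=M_1^{\pm1}$, $Z_5=M_2^{\pm1}$, which is both the source of the $\chi_2$-terms $c_1,c_2,c_{12},\dots$ (via ${\rm tr}(M^2)={\rm tr}(M)^2-2\chi_2(M)$, all on words of length $\leq 2$) and the reason every long term collapses: squares $M_i^{\pm2}$ merge by $g_i^3=1$, and the one remaining competitor of full length is handled by a case analysis according to whether $w$ contains (up to cyclic permutation) a subword $y_1y_2y_1y_2$, $y_1y_2y_1^{-1}y_2$ or $y_1y_2y_1^{-1}y_2^{-1}$, the later cases being reduced to equal-length words already covered by the earlier ones. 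Your proposal contains no substitute for this pattern analysis, and without it the length reduction fails.

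A secondary error: you cannot eliminate inverses the way you suggest. From $g_i^3=1$ one has $g_i^{-1}=g_i^2$, but $g_i^2$ is in general \emph{not} an $\mathbb{F}$-linear combination of $1$ and $g_i$ (the minimal polynomial of $g_i$ may be $(T-1)(T^2+T+1)$ or $T^2+T+1$, of degree $3$ or $2$, and in the degree-$3$ case $1,g_i,g_i^2$ are independent), so your claim that every word is a linear combination of strictly positive alternating words $g_1g_2g_1g_2\cdots$ with coefficients in the field generated by ${\rm tr}(g_i),\chi_2(g_i)$ is false. The correct normal form, used in the paper, keeps exponents in $\{\pm1\}$ (reducing exponents mod $3$ and cyclically permuting), which is exactly why three different substitution patterns $(Z_4,Z_5)=(M_1,M_2),(M_1^{-1},M_2),(M_1^{-1},M_2^{-1})$ — hence the three displayed identities — are needed rather than one.
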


\begin{proof}
It is enough to  show that for every $w\in F_2$, ${\rm tr}(w(g_1,g_2))$ is a polynomial in $X$ with coefficients in $\mathbb{Z}$.  We proceed by induction on $|w|$. The claim is obvious for $|w|\leq 4$. Assume $|w|>4$. Since $|g_1|=|g_2|=3$ and since the trace is invariant under cyclic permutations of a product, we may assume 
\begin{equation}\label{e:wordeq2016}
w=x_1^{a_1}x_2^{b_1}\cdots x_1^{a_r}x_2^{b_r}
\end{equation} with $a_i, b_i \in \{\pm 1\}$. Then
$$w(g_1,g_2)=g_1^{a_1}g_2^{b_1}\cdots g_1^{a_r}g_2^{b_r}.$$
Note that a word $w$ of length $|w|>4$ as in (\ref{e:wordeq2016}) must contain, up to cyclic permutation, a subword of the form $y_1y_2y_1y_2$ or $y_1y_2y_1^{-1}y_2$ or $y_1y_2y_1^{-1}y_2^{-1}$ where $\{y_1, y_1^{-1},y_2,y_2^{-1}\}=\{x_1,x_1^{-1},x_2,x_2^{-1}\}$.\\
Assume first that $w=x_1x_2x_1x_2x_1^{a_3}\cdots x_2^{b_r}$. Set $M_1=g_1$, $M_2=g_2$ and $M_3=g_1^{a_3}\cdots g_2^{b_r}$. Then 
$$t_{21213}={\rm tr}(M_2M_1M_2M_1M_3)={\rm tr}(\tilde{w}(g_1,g_2))$$ where $\tilde{w}\in F_2$ is such that $\tilde{w}(g_1,g_2)=g_2g_1g_2g_1^{a_3+1}g_2^{b_3}\cdots g_2^{b_r}$ and  $|\tilde{w}|\leq |w|-1$.
By Equation (\ref{e:procesi1}) and induction ${\rm tr}(w(g_1,g_2))=t_{12123}$ can be written as a polynomial in $X$ with coefficients in $\mathbb{Z}$. 
Similarly, as the trace is invariant under cyclic permutations, if $w$ or one of its cyclic permutations contains a subword of the form $y_1y_2y_1y_2$, where $\{y_1, y_1^{-1},y_2,y_2^{-1}\}=\{x_1,x_1^{-1},x_2,x_2^{-1}\}$ then ${\rm tr}(w(g_1,g_2))$ can be written as a polynomial in $X$ with coefficients in $\mathbb{Z}$. In the remainder, we may therefore assume that neither $w$ nor any of its cyclic permutations contain a subword of the form $y_1y_2y_1y_2$ where $\{y_1, y_1^{-1},y_2,y_2^{-1}\}=\{x_1,x_1^{-1},x_2,x_2^{-1}\}$.\\
Assume next that $w=x_1x_2x_1^{-1}x_2x_1^{a_3}\cdots x_2^{b_r}$. By the above, we may assume that $a_3=1$. Set $M_1=g_1$, $M_2=g_2$ and $M_3=g_1g_2^{b_3}\cdots g_2^{b_r}$. By Equation (\ref{e:procesi2})   $t_{12-123}+t_{-12123}$ can be written as a polynomial in $X$ with coefficients in $\mathbb{Z}$. Now $M_1^{-1}M_2M_1M_2M_3$ contains the subword $g_2g_1g_2g_1$ so $t_{-12123}={\rm tr}(M_1^{-1}M_2M_1M_2M_3)$ can be written as a polynomial in $X$ with coefficients in $\mathbb{Z}$, hence so can be ${\rm tr}(w(g_1,g_2))=t_{12-123}$.\\
By the two cases considered above, in the remainder, we may assume that neither $w$ nor any of its cyclic permutations contain a subword of the form $y_1y_2y_1y_2$ or $y_1y_2y_1^{-1}y_2$ where $\{y_1, y_1^{-1},y_2,y_2^{-1}\}=\{x_1,x_1^{-1},x_2,x_2^{-1}\}$.\\
Assume now that $w=x_1x_2x_1^{-1}x_2^{-1}x_1^{a_3}\cdots x_2^{b_r}$.  By the above,  we may assume $a_3=b_3=1$.  Set $M_1=g_1$, $M_2=g_2$ and $M_3=g_1g_2\cdots g_2^{b_r}$. Using Equation (\ref{e:procesi3}), a similar argument to the two above yields that ${\rm tr}(w(g_1,g_2))=t_{12-1-23}$ can be written as a polynomial in $X$ with coefficients in $\mathbb{Z}$.   Similarly, if $w$,  or a cyclic permutation of $w$, contains a subword of the form $y_1y_2y_1^{-1}y_2^{-1}$ where $\{y_1, y_1^{-1},y_2,y_2^{-1}\}=\{x_1,x_1^{-1},x_2,x_2^{-1}\}$ then ${\rm tr}(w(g_1,g_2))$ can be written as a polynomial in $X$ with coefficients in $\mathbb{Z}$.
This concludes the proof.
\end{proof}

In the statement below, by an (absolutely) irreducible subgroup of a classical group $G$, we mean a subgroup acting (absolutely) irreducibly on the natural module for $G$.

\begin{lem}\label{l:wedd}
Let $g_1, g_2\in {\rm GL}_n(\mathbb{F})$  where $n\geq 4$. Suppose $g_1$ and $g_2$ both have minimal polynomials of degree $2$. Then $\langle g_1, g_2\rangle$ is not absolutely irreducible.  
\end{lem}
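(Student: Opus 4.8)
\textit{Proof plan.}
The statement asserts that two elements $g_1,g_2 \in {\rm GL}_n(\mathbb{F})$ with $n \geq 4$, each satisfying a quadratic minimal polynomial, cannot generate an absolutely irreducible subgroup. The plan is to pass to the subalgebra $A = \mathbb{F}\langle g_1, g_2\rangle$ of ${\rm M}_n(\bar{\mathbb{F}})$ (extending scalars to an algebraic closure, which is harmless for absolute irreducibility) and show $A$ cannot be all of ${\rm M}_n(\bar{\mathbb{F}})$ by a dimension count. Since $g_i$ has minimal polynomial of degree $2$, we have $g_i^2 \in \mathrm{span}\{1, g_i\}$, so $A$ is spanned by $1$ together with all \emph{alternating words} $g_1 g_2 g_1 g_2 \cdots$ and $g_2 g_1 g_2 g_1 \cdots$ of positive length. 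The key structural observation is that this list of alternating words does not grow as fast as one might fear, because of a relation that kicks in: the subalgebra $B = \mathbb{F}\langle g_1 g_2\rangle$ together with the action of $g_1$ controls everything.

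First I would normalise: replacing $g_i$ by $\alpha g_i + \beta$ does not change $\langle g_1,g_2\rangle$ as a subgroup nor the generated algebra, and by such an affine change we may assume each $g_i$ is an involution, i.e. $g_i^2 = 1$ (if the quadratic minimal polynomial has distinct roots; the repeated-root case $g_i$ not semisimple must be handled by a separate, easier argument since then $(g_i - \lambda)^2 = 0$ and $g_i - \lambda$ is nilpotent of square zero — I will treat that case first, showing $g_i$ fixes a large subspace). Assuming now $g_1^2 = g_2^2 = 1$, set $h = g_1 g_2$. Then $g_1 h g_1 = h^{-1}$, so the algebra $A$ is spanned by $\{h^k : k \in \mathbb{Z}\} \cup \{g_1 h^k : k \in \mathbb{Z}\}$. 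Therefore $\dim A \leq 2\cdot(\text{degree of minimal polynomial of } h)$. The point is now to bound the degree $d$ of the minimal polynomial of $h = g_1 g_2$: I claim $\dim A \leq 2d$, while ${\rm M}_n(\bar{\mathbb{F}})$ has dimension $n^2 \geq 16$; if additionally one shows $d \leq $ something like $n-1$ or uses that $\langle g_1,g_2\rangle$ irreducible of degree $n$ forces $d \geq n$ but the span structure forces $A = \langle 1, h, \dots, h^{d-1}, g_1, g_1 h, \dots, g_1 h^{d-1}\rangle$ to have dimension at most $2d < n^2$ unless $n \leq 2$, we are done. Concretely: $A$ absolutely irreducible means $\dim A = n^2$; but $\dim A \leq 2d \leq 2n$ (since $h \in {\rm GL}_n$ has minimal polynomial of degree $\leq n$), and $2n < n^2$ for $n \geq 3$, hence a fortiori for $n \geq 4$ — contradiction.

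The main obstacle — and the step I would be most careful about — is justifying that $A$ really is spanned by $\{h^k\} \cup \{g_1 h^k\}$, i.e. that no products require a "deeper" structure. This follows by induction on word length using $g_1^2 = g_2^2 = 1$: any word in $g_1, g_2$ reduces to an alternating word, every alternating word of even length is a power of $h$ or $h^{-1}$ possibly conjugated, and every alternating word of odd length has the form $g_1 h^{\pm k}$ or $g_2 h^{\pm k} = g_1 h^{\mp k} g_1 \cdot g_1 = \ldots$; a short case check on which generator starts and ends the word, combined with $g_2 = g_1 h$, collapses everything into the claimed spanning set. The repeated-eigenvalue (non-semisimple) case needs the separate remark that $(g_i - \lambda_i)^2 = 0$ forces $\mathrm{rank}(g_i - \lambda_i) \leq n/2$, so each $g_i$ centralises a subspace of dimension $\geq n/2 \geq 2$; one then argues that two such elements, or one such and one semisimple, still generate an algebra of dimension strictly less than $n^2$ by the same "alternating words collapse" bookkeeping, since the nilpotent part squares to zero and truncates the relevant geometric/arithmetic progressions. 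In all cases the numerics $n \geq 4$ give ample room, so the inequality is not tight and the argument is robust.
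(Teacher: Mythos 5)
Your overall strategy (Burnside/Wedderburn plus a spanning count by alternating words, compared with $\dim {\rm M}_n = n^2$) is the right one and is essentially the paper's, but as written your proof has a genuine gap: it only covers the case where both $g_i$ are semisimple and ${\rm char}(\mathbb{F})\neq 2$. The affine normalisation to involutions fails whenever some $g_i$ has minimal polynomial $(T-\lambda)^2$, and also in characteristic $2$ when the two eigenvalues are distinct (an invertible element with distinct eigenvalues cannot be made an involution there, since involutions in characteristic $2$ are unipotent) --- a case you do not address at all. For the non-semisimple case your text offers only the remark that ${\rm rank}(g_i-\lambda_i)\leq n/2$ and that the nilpotent part ``truncates the relevant progressions''; this is not an argument: it produces neither a spanning set nor a dimension bound, and the dihedral relation $g_1hg_1^{-1}=h^{-1}$, on which your count $\dim A\leq 2d$ rests, is unavailable once $g_1^2$ is not scalar. (A smaller slip: $g_i\mapsto\alpha g_i+\beta$ does change the subgroup $\langle g_1,g_2\rangle$; it only preserves the generated algebra, which is what you actually use.)

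The repair is to drop the normalisation altogether, which is exactly what the paper does. Using only $g_i^2\in{\rm span}\{1,g_i\}$ (hence also $g_i^{-1}\in{\rm span}\{1,g_i\}$), every word in $g_1^{\pm1},g_2^{\pm1}$ lies in the span of $1$, $g_1$, $g_2$ together with the alternating words $(g_1g_2)^m$, $(g_2g_1)^m$, $(g_1g_2)^mg_1$, $(g_2g_1)^mg_2$; Cayley--Hamilton applied to $g_1g_2$ and $g_2g_1$ (whose characteristic polynomials have degree $n$) lets you restrict to $1\leq m\leq n-1$, so the generated algebra has dimension at most $4(n-1)+3=4n-1<n^2$ for $n\geq4$, contradicting absolute irreducibility, with no case distinction on semisimplicity or characteristic. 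Your dihedral observation is a nice sharpening where it applies (it gives the bound $2n$, excluding even $n=3$ in that case), but the lemma requires all cases, and as submitted the missing ones are only waved at.
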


\begin{proof}
 Suppose $\langle g_1,g_2 \rangle$ is absolutely irreducible. Then, as a consequence of Wedderburn's Theorem,    ${\rm M}_n(\mathbb{F})$ has a $\mathbb{F}$-basis of elements of the form $w(g_1,g_2)$ where $w\in F_2$.   Since, for $i \in \{1,2\}$, $g_i$ has minimal polynomial of degree $2$, and $g_1g_2$ and $g_2g_1$ have characteristic polynomials of degree $n$, we deduce that $$S=\{1,g_1, g_2, (g_1g_2)^m, (g_1g_2)^mg_1, (g_2g_1)^m, (g_2g_1)^mg_2: 1\leq m \leq n-1 \}$$ spans  ${\rm M}_n(\mathbb{F})$ over $\mathbb{F}$. But $$|S|=4(n-1)+3=4n-1<n^2 =\dim {\rm M}_n(\mathbb{F}),$$ a contradiction.    
  \end{proof}

\begin{prop}\label{p:chf}
Let $g_1,g_2 \in {\rm Sp}_4(\mathbb{F})$ be elements of order $3$ such that $\langle  g_1,g_2\rangle$ is absolutely irreducible. Set $t_{12}={\rm tr}(g_1g_2)$, $t_{1-2}={\rm tr}(g_1g_2^{-1})$ and $c_{12}=\chi_2(g_1g_2)$. The character field of $\langle  g_1, g_2\rangle$ is generated by $t_{12}$, $c_{12}$ and $t_{1-2}$. Moreover 
\begin{equation}\label{e:gencharfield}
( t_{1-2}+t_{12}+1)(t_{1-2}^2+(2t_{12}-10)t_{1-2}+t_{12}^2-9c_{12}+8t_{12}+7)=0.
\end{equation}
\end{prop}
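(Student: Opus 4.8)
The plan is to exploit the defining property $g_1^3 = g_2^3 = 1$ together with the previous lemmas, and then pin down the character field using only a small amount of trace combinatorics. First I would record the consequences of the order-$3$ condition: for each $i$, $g_i$ has minimal polynomial dividing $T^3 - 1 = (T-1)(T^2+T+1)$. Since $\langle g_1,g_2\rangle$ acts absolutely irreducibly on the natural $4$-dimensional module, neither $g_i$ can be scalar, and by Lemma~\ref{l:wedd} (applied with $n=4$) not both $g_i$ can have minimal polynomial of degree $2$. If $g_i$ had minimal polynomial of degree $3$ equal to $T^2+T+1$ times a linear factor, its eigenvalues are among $\{1,\omega,\omega^2\}$ with $\omega$ a primitive cube root of unity; in every case ${\rm tr}(g_i)$, $\chi_2(g_i)$, $\chi_1(g_i)$, $\chi_0(g_i)=\det(g_i)$ are explicitly determined integers (or elements of the prime field), since the characteristic polynomial must divide $(T-1)^a(T^2+T+1)^b$ with $a+2b=4$. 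Combining with $g_i \in {\rm Sp}_4$, so $\det g_i = 1$ and the characteristic polynomial is self-reciprocal, the possibilities for $p_{g_i}(T)$ reduce to a short explicit list, and in particular $\chi_2(g_i)$, ${\rm tr}(g_i)$ take values in $\{$a finite set of integers$\}$ independent of $\mathbb{F}$.

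Next I would reduce the generating set from Proposition~\ref{p:chfgeneration}. That proposition says ${\rm cf}(\langle g_1,g_2\rangle)$ is generated by traces of words of length $\le 4$ together with $\chi_2$ of words of length $\le 2$. I would systematically rewrite each such generator using: (a) the explicit values of ${\rm tr}(g_i^{\pm 1})$ and $\chi_2(g_i^{\pm 1})$ just computed; (b) the relation $g_i^{-1} = g_i^2 = -g_i - 1 + (\text{lower terms})$ coming from the minimal polynomial, which expresses $g_i^{-1}$ as a linear combination of $1, g_i$ (when the minimal polynomial has degree $2$) or of $1, g_i, g_i^2$; (c) Cayley--Hamilton for $4\times 4$ matrices, which expresses $\chi_2$ of a product in terms of traces of its powers via ${\rm tr}(M^2) = {\rm tr}(M)^2 - 2\chi_2(M)$ and the Newton identities, so that $\chi_2(w(g_1,g_2))$ for $|w|=2$ — the only cases that genuinely matter are $\chi_2(g_1g_2)$ and $\chi_2(g_1g_2^{-1})$ — is controlled by $t_{12}, t_{1-2}$ and already-known scalars. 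The upshot is that all length-$\le 4$ word traces collapse to polynomials with integer coefficients in the three quantities $t_{12} = {\rm tr}(g_1g_2)$, $t_{1-2} = {\rm tr}(g_1g_2^{-1})$ and $c_{12} = \chi_2(g_1g_2)$; this is the content of the first assertion.

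For the identity \eqref{e:gencharfield}, my plan is to use the symplectic structure more decisively. On ${\rm Sp}_4$ the natural module carries an invariant alternating form, so each $g_i$ is conjugate to its inverse-transpose and the characteristic polynomial of any $g \in {\rm Sp}_4(\mathbb{F})$ is palindromic: $\chi_0(g) = 1$, $\chi_1(g) = \chi_3(g) = {\rm tr}(g)$. Apply this to $g = g_1 g_2$: then ${\rm tr}(g_1 g_2^{-1}) = {\rm tr}(g_1 g_2 \cdot g_2^{-2}) = {\rm tr}(g_1 g_2 \cdot (-g_2 - 1)) = -{\rm tr}(g_1 g_2^2) - {\rm tr}(g_1 g_2) = \dots$ — more usefully, $g_2^{-1}$ and $g_1^{-1}$ are linear in $g_2, g_1$ up to the degree-$3$ case, and ${\rm tr}(g_1^{-1}g_2^{-1}) = {\rm tr}((g_1g_2)^{-1}) = \chi_1(g_1g_2)/\chi_0(g_1g_2) = {\rm tr}(g_1g_2) = t_{12}$ by palindromicity. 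I would then compute ${\rm tr}(g_1 g_2^{-1})$ and ${\rm tr}(g_1^{-1} g_2)$ two ways — directly, and by expanding $g_i^{-1}$ via the order-$3$ relation — to get one polynomial relation; a second relation comes from evaluating ${\rm tr}((g_1 g_2)^2)$, hence $\chi_2(g_1g_2) = c_{12}$, against the same expansions, producing a quadratic constraint. Assembling these, together with the constraint that the product $g_1 g_2$ when $\langle g_1, g_2\rangle$ is irreducible is not scalar (ruling out a degenerate branch $t_{1-2} + t_{12} + 1 \ne 0$ being forced — note the identity is a product equalling zero, so one factor vanishing is the irreducible-but-special case), yields exactly the displayed factored relation. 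The main obstacle I anticipate is the bookkeeping in the reduction step: there are many length-$3$ and length-$4$ words, and one must be careful that the substitutions $g_i^{-1} \mapsto$ (linear or quadratic expression) interact correctly with the Procesi relations \eqref{e:procesi1}--\eqref{e:procesi3} and with cyclic invariance of the trace; handling the case split according to whether each $g_i$ has minimal polynomial of degree $2$ or $3$ (the degree-$2$ case being where Lemma~\ref{l:wedd} does real work) is where the argument is most delicate, and it is also where one must verify that \eqref{e:gencharfield} holds uniformly across characteristics, using the integral-domain specialisation trick already deployed in the proof of the Procesi-type lemma.
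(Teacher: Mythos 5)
There is a genuine gap, and it sits exactly where the real work of the proposition lies. Your reduction step claims that all traces of words of length at most $4$ (and $\chi_2$ of words of length $2$) ``collapse'' to integer polynomials in $t_{12},c_{12},t_{1-2}$ by substituting explicit values of ${\rm tr}(g_i^{\pm1})$, $\chi_2(g_i^{\pm1})$, using $g_i^{-1}=-g_i-1$, Cayley--Hamilton and Newton identities. But by Lemma~\ref{l:wedd} at least one of $g_1,g_2$ must have minimal polynomial $(T-1)(T^2+T+1)$ of degree $3$, and then $g_i^{-1}=g_i^2$ is \emph{not} a linear combination of $1$ and $g_i$; so quantities such as $c_{1-2}$, $t_{121-2}$, $t_{12-12}$, $t_{12-1-2}$, $t_{1-21-2}$ do not reduce by any formal substitution. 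In fact they are not given by a single uniform polynomial at all: in the paper's proof the expressions differ from case to case (e.g.\ one configuration forces $c_{12}=2t_{12}+7$ and $t_{1-2}=-t_{12}-4$, another forces $t_{1-2}=-t_{12}-1$, another yields only the quadratic relation), and obtaining them requires the full strength of the hypotheses: absolute irreducibility is used to build explicit bases such as $(v,g_1v,g_2g_1v,w)$ from a $2$-dimensional eigenspace $W$ of $g_2$, the symplectic condition is imposed as $g_i^{T}Xg_i=X$ for an unknown nondegenerate alternating $X$ (palindromicity of characteristic polynomials alone is far too weak), and the resulting polynomial identities are verified as ideal-membership statements via Gr\"obner bases over a generic integral domain, then specialised to $\mathbb{F}$. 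None of this is replaced by anything concrete in your proposal.

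The same gap affects the relation \eqref{e:gencharfield}. ``Assembling these \dots{} yields exactly the displayed factored relation'' is not a derivation, and your structural guess about the two factors is incorrect: the factorisation does not separate a degenerate ``$g_1g_2$ scalar'' branch from the generic one, but reflects the case division in the geometry of the pair $(g_1,g_2)$ --- whether some $0\neq v$ in an eigenspace $W$ of $g_2$ satisfies $g_1^2v\in\langle v,g_1v\rangle$, and the behaviour of $g_1$ on $V/U$ --- with each case forcing one specific factor to vanish (e.g.\ $t_{1-2}=-t_{12}-1$ kills the linear factor, while the generic case in which $(v,g_1v,g_1^2v)$ is independent produces exactly the quadratic factor). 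So the missing idea is the explicit case analysis with matrix models and the accompanying computation (or an equivalent invariant-theoretic argument); your steps that do align with the paper --- the eigenvalue bookkeeping for elements of order $3$, the appeal to Proposition~\ref{p:chfgeneration} and Lemma~\ref{l:wedd} --- are only the preamble to that argument, not a substitute for it.
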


\begin{proof}
Since we can argue up to conjugation in ${\rm GL}_4(\overline{\mathbb{F}})$  where $\overline{\mathbb{F}}$ denotes an algebraic closure of $\mathbb{F}$, we may assume without loss of generality that $\mathbb{F}$ is algebraically closed. 
For a positive integer $k$ and a $k$-tuple $(i_1,\dots, i_k)$ with $i_j\in\{\pm1,\pm2\}$ for $1\leq j\leq k$, set 
$$t_{i_1\dots i_k}:={\rm tr}(g_{i_1}\dots g_{i_k})$$ and 
$$c_{i_1\dots i_k}:=\chi_2(g_{i_1}\dots g_{i_k}),$$ where $$g_{-l}:=g_l^{-1}$$ for $l\in \{1,2\}$. If ${\rm char}(\mathbb{F})\neq 3$ we let $\omega \in \mathbb{F}$ be a primitive third root of unity.\\
Let $g$ be an element of  ${\rm Sp}_4(\mathbb{F})$ of  order 3. Note that 
$$p_g(T)=\left\{\begin{array}{ll} 
(T-\omega)^2(T-\omega^{-1})^2 & \textrm{if} \ {\rm char}(\mathbb{F})\neq 3 \ \textrm{and} \ 1 \ \textrm{is not an eigenvalue of } \ g \\
(T-1)^2(T-\omega)(T-\omega^{-1}) & \textrm{if} \ {\rm char}(\mathbb{F})\neq 3 \ \textrm{and} \ 1 \ \textrm{is  an eigenvalue of } \ g \\
(T-1)^4  & \textrm{if} \ {\rm char}(\mathbb{F})= 3. 
 \end{array}\right.$$  
 In particular, by (\ref{e:charpoly}),   ${\rm tr}(g)\in\{-2,1\}$ and $\chi_2(g)\in\{0,3\}$.\\
More generally   if $g,h\in {\rm GL}_4(\mathbb{F})$ then  $gh$ and $hg$ have the same characteristic polynomial. Moreover since ${\rm tr}(g^{-1})={\rm tr}(g)$ and $\chi_2(g^{-1})=\chi_2(g)$  for every $g\in {\rm Sp}_4(\mathbb{F})$, we now deduce from Proposition \ref{p:chfgeneration} that the character field of $ \langle g_1,g_2\rangle$ is generated by $$\{c_{12}, c_{1-2}, t_{12}, t_{1-2},t_{1212},t_{121-2},t_{12-12},t_{12-1-2},t_{1-21-2}\}.$$
Therefore it is sufficient to show that $c_{1-2}$, $t_{1212}$,  $t_{121-2}$, $t_{12-12}$, $t_{12-1-2}$ and  $t_{1-21-2}$ can be written as polynomials in $t_{12}$, $c_{12}$ and $t_{1-2}$ with coefficients in $\mathbb{Z}$, and that 
the relation in (\ref{e:gencharfield}) is satisfied.\\
We first note that $$t_{1212}={\rm tr}((g_1g_2)^2)= {\rm tr}(g_1g_2)^2-2\chi_2(g_1g_2)=t_{12}^2-2c_{12}$$ and so $t_{1212}$ is a polynomial in $t_{12}$ and $c_{12}$ with integer coefficients. \\
Note also that  for an element $g$ of ${\rm Sp}_4(\mathbb{F})$ of order $3$ we have 
$$p_g(T)=\left\{\begin{array}{ll} 
(T^2+T+1)^2& \textrm{if} \ {\rm char}(\mathbb{F})= 3 \ \textrm{or} \ 1 \ \textrm{is not an eigenvalue of } \ g \\
(T-1)^2(T^2+T+1) & \textrm{if} \ {\rm char}(\mathbb{F})\neq 3 \ \textrm{and} \ 1 \ \textrm{is  an eigenvalue of } \ g \\
 \end{array}\right.$$  
and the  minimal polynomial of $g$ is  $(T-1)(T^2+T+1)$ or $T^2+T+1$   according respectively as whether or not  ${\rm char}(\mathbb{F})\neq 3$ and $1$ is an eigenvalue of $g$.\\
  Let $V$ be the natural module for ${\rm Sp}_4(\mathbb{F})$.  Since $\langle g_1,g_2\rangle$ is irreducible, Lemma \ref{l:wedd} yields that at least one of $g_1$ or $g_2$ has minimal polynomial of degree 3. Without loss of generality, we may assume that $g_1$ has minimal polynomial of degree 3. \\
  Let $W\leq V$ be a two-dimensional eigenspace for $g_2$. \\

We now consider two cases:
\begin{enumerate}[(a)]
\item There is a nonzero element $v$ of $W$ such that $g_1^2v \in \langle v, g_1v\rangle$.
\item The triple $(v,g_1v, g_1^2v)$ is linearly independent for every $0\neq v \in W$.
\end{enumerate}

\textbf{Case (a).} Assume first that there is a nonzero element $v$ of $W$ such that $g_1^2v \in \langle v, g_1v\rangle$. Let $U=\langle v,g_1v\rangle$.  Note that $\dim U=2$.  Indeed, otherwise $v$ is an eigenvector of $g_1$ and $g_2$, contradicting the irreducibility of $\langle g_1, g_2\rangle$. Note also that $U$ is a $\langle g_1 \rangle$-invariant subspace. Let $g_U$ and $g_{V/U}$ be the linear maps induced by $g_1$ on $U$ and $V/U$, respectively.  Then $g_U$ has characteristic polynomial $T^2+T+1$ and so $g_{V/U}$ must have characteristic polynomial $(T-1)^2$.  Thus $g_{V/U}$ has minimal polynomial $T-1$ or $(T-1)^2$, and the latter case can only occur if ${\rm char}(\mathbb{F})=3$.\\
Note that $v, g_1v$ and $g_2g_1v$ are linearly independent. Indeed otherwise $g_2g_1v \in \langle v, g_1v \rangle$ and so  $U$ is $\langle g_1, g_2 \rangle$-invariant, contradicting the irreducibility of $\langle g_1,g_2\rangle$.\\
Let $w\in W$ be such that $W=\langle v,w \rangle$. Note that $w\not \in U$. Indeed, otherwise $U$ is again $\langle g_1, g_2 \rangle$-invariant, a contradiction.\\

\textbf{Subcase (a.1).} Suppose that $g_{V/U}$ has minimal polynomial $T-1$.   Since $g_{V/U}$ has minimal polynomial $T-1$ and $w\not \in U$, we deduce that $g_1w=w+u$ for some $u\in U$. We claim that $B=(v, g_1v,g_2g_1v,w )$ is a basis of $V$. Suppose not. Then $w\in \langle v, g_1v, g_2g_1v \rangle$ and so $\langle  v, g_1 v, g_2g_1v\rangle=\langle  v, g_1v, w\rangle$  is $\langle g_1,g_2 \rangle$-invariant,  contradicting the irreducibility of $\langle g_1,g_2\rangle$. With respect to the basis $B$ of $V$, the elements $g_1$ and $g_2$ have the following form
$$ g_1=\begin{pmatrix} 0 & -1 & a_{13} & a_{14}\\
1& -1 & a_{23} &a_{24}\\
0 & 0 & 1 & 0\\
0 & 0 & 0 & 1
 \end{pmatrix}, \quad 
 g_2=\begin{pmatrix} 
 b_{11} & 0 & b_{13} & 0\\
0& 0 & b_{23} &0\\
0 & 1 & b_{33} & 0\\
0 & 0 & b_{43} & b_{11}
 \end{pmatrix}.$$
 Furthermore, they preserve an alternating form, so for $i\in \{1,2\}$ we have $g_i^TXg_i=X$ for some invertible matrix
 $$ X=\begin{pmatrix} x_{11} & x_{12} & x_{13} & x_{14}\\
 -x_{12} & x_{22} & x_{23} & x_{24} \\
 -x_{13} & -x_{23} & x_{33} & x_{34} \\
   -x_{14} & -x_{24} & -x_{34} & x_{44} 
  \end{pmatrix}$$ 
  with $x_{jj}=0$.
  We may assume ${\rm det}(X)=1$. \\
  Let $R=\mathbb{Z}[\alpha_{13}, \alpha_{14},\alpha_{23},\alpha_{24}, \beta_{11},\beta_{13}, \beta_{23},\beta_{33},\beta_{43},\delta_{ij}: 1\leq i \leq j \leq 4]$
  where  $\alpha_{ij}$, $\beta_{ij}$ and $\delta_{ij}$ are indeterminates. 
  Set 
  $$ \Gamma_1=\begin{pmatrix} 0 & -1 & \alpha_{13} & \alpha_{14}\\
1& -1 & \alpha_{23} &\alpha_{24}\\
0 & 0 & 1 & 0\\
0 & 0 & 0 & 1
 \end{pmatrix}, \quad 
 \Gamma_2=\begin{pmatrix} 
 \beta_{11} & 0 & \beta_{13} & 0\\
0& 0 & \beta_{23} &0\\
0 & 1 & \beta_{33} & 0\\
0 & 0 & \beta_{43} & \beta_{11}
 \end{pmatrix},$$
 $$  \Delta=\begin{pmatrix} \delta_{11} & \delta_{12} & \delta_{13} & \delta_{14}\\
 -\delta_{12} & \delta_{22} & \delta_{23} & \delta_{24} \\
 -\delta_{13} & -\delta_{23} & \delta_{33} & \delta_{34} \\
   -\delta_{14} & -\delta_{24} & -\delta_{34} & \delta_{44} 
  \end{pmatrix},$$
  and let $I$ be the ideal of $R$ generated by ${\rm det}(\Delta)-1$ and the entries of the five matrices $\Gamma_1^3-I_4$, $\Gamma_2^3-I_4$, $\Gamma_1^T\Delta\Gamma_1-\Gamma_1$,
  $\Gamma_2^T\Delta\Gamma_2-\Gamma_2$ and $\Delta+\Delta^T$.  Then $\epsilon: R\rightarrow \mathbb{F}:$ $\alpha_{ij}\mapsto a_{ij}$, $\beta_{ij}\mapsto b_{ij}$, $\delta_{ij}\mapsto x_{ij}$ defines a ring homomorphism that factors over $R/I$. In particular if, with inclusively the notation from (\ref{e:charpoly}),  $$\sum_{w \in F_2} \lambda_w \chi_i(w(\Gamma_1,\Gamma_2))\in I,$$
  where $i\in \{0,1,2,3\}$, $\lambda_w \in R$ for all $w \in F_2$ and $\lambda_w=0$ for all but finitely many $w$, then 
   $$ \sum_{w\in F_2} \epsilon(\lambda_w) \chi_i(w(g_1,g_2))=0.$$
   Ideal membership can be tested using Gr\"{o}bner bases in MAGMA \cite{Magma} and the claims of the proposition are easily verified. We compute 
   \begin{eqnarray*}
   c_{1-2} &=&  -2t_{12}-1\\
   t_{121-2} & =&t_{12-12}= -t_{12}^2-4t_{12}-3\\
      t_{12-1-2} &= &t_{12}^2+4t_{12}+8\\
      t_{1-21-2}&=&t_{12}^2+12t_{12}+18\\
      t_{1-2} &=& -t_{12}-4\\
      c_{12}&=&2t_{12}+7.
   \end{eqnarray*}
   Also $t_{1-2}$ satisfies the required relation.  \\

  \textbf{Subcase (a.2).} Suppose that $g_{V/U}$ has minimal polynomial $(T-1)^2$. Then ${\rm char}(\mathbb{F})=3$ and so $g_1$  and $g_2$ have a unique eigenvalue, namely $1$. There are two cases to consider, namely the case where $B_1=(v,g_1v,g_2g_1v,g_1g_2g_1v)$ is a basis of $V$ and the case where $g_1g_2g_1v\in \langle v,g_1v,g_2g_1v\rangle$. In the former case, with respect to the basis $B_1$ of $V$, the elements $g_1$ and $g_2$ have the following form
   $$ g_1=\begin{pmatrix} 0 & -1 & 0& a_{14}\\
1& -1 & 0 &a_{24}\\
0 & 0 & 0 & -1\\
0 & 0 & 1 & -1
 \end{pmatrix}, \quad 
 g_2=\begin{pmatrix} 
 1 & 0 & b_{13} & b_{14}\\
0& 0 & b_{23} &b_{24}\\
0 & 1 & b_{33} & b_{34}\\
0 & 0 & b_{43} & b_{44}
 \end{pmatrix}.$$
 Using the method described above, we get:
    \begin{eqnarray*}
   c_{1-2} &=&  c_{12}+2t_{12}+1\\
   t_{121-2}  &=&t_{12-12}= -t_{12}^2+2c_{12}+t_{12}+1\\
      t_{12-1-2}& =& t_{12}^2+c_{12}+2t_{12}+1\\
      t_{1-21-2}&=&t_{12}^2-2c_{12}-2t_{12}-1\\
      t_{1-2} &= &-t_{12}-1,
   \end{eqnarray*}
   and $t_{1-2}$ satisfies the required relation. \\
   Suppose $g_1g_2g_1v \in \langle v, g_1v, g_2g_1v \rangle$. Since $\dim W=2$ and $v\in W\cap  \langle v, g_1v, g_2g_1v \rangle$, we deduce that 
   $$\dim  (W\cap  \langle v, g_1v, g_2g_1v \rangle)\in \{1,2\}.$$
   If  $$\dim  (W\cap  \langle v, g_1v, g_2g_1v\rangle)=2$$ then, as $w\not \in U$, we get that $\langle v, g_1v, g_2g_1v \rangle=\langle v, g_1v, w \rangle$ is $\langle g_1,g_2 \rangle$-invariant,  contradicting the irreducibility of $\langle g_1,g_2\rangle$. Hence  $$\dim  (W\cap  \langle v, g_1v, g_2g_1v \rangle)=1$$ and 
   $B_2=(v,g_1v,g_2g_1v,w)$ is a basis of $V$.  Since $g_{V/U}$ has minimal polynomial $(T-1)^2$, we deduce that with respect to the basis $B_2$ of $V$,  the elements $g_1$ and $g_2$ have the following form
$$ g_1=\begin{pmatrix} 0 & -1 & a_{13} & a_{14}\\
1& -1 & -a_{13} &a_{24}\\
0 & 0 & 1 & a_{34}\\
0 & 0 & 0 & 1
 \end{pmatrix}, \quad 
 g_2=\begin{pmatrix} 
 1 & 0 & b_{13} & 0\\
0& 0 & -1 &0\\
0 & 1 & -1 & 0\\
0 & 0 & b_{43} & 1
 \end{pmatrix}$$
 and we find
   \begin{eqnarray*}
   c_{1-2} &=&  -2t_{12}-1\\
   t_{121-2}  &=&t_{12-12}= -t_{12}^2-4t_{12}-3\\
      t_{12-1-2} &= &t_{12}^2+4t_{12}+8\\
      t_{1-21-2}&=&t_{12}^2+12t_{12}+18\\
      t_{1-2} &= &-t_{12}-4\\
      c_{12}&=&2t_{12}+7,
   \end{eqnarray*}
   and $t_{1-2}$ satisfies the required relation.  \\
   
\textbf{Case (b).} Assume now that $(v,g_1v, g_1^2v)$ is linearly independent for every $0\neq v \in W$. Fix such a $v$, and let $U=\langle v,g_1 v, g_1^2 v \rangle$ and let  $w\in W$ be such that $W=\langle  v,w \rangle$.  Note that $U$ is $\langle g_1\rangle$-invariant. We claim that $(v,g_1v,g_1^2v,w)$ is a basis of $V$. Suppose not. Then $w\in U$. Hence both $v+g_1v+g_1^2v$ and $w+g_1w+g_1^2 w$ belong to $U$ and are eigenvectors of $g_1$ with eigenvalue $1$. Since the eigenspace of $g_1$ on $U$ with eigenvalue 1 is one-dimensional, we get   $$v+g_1v+g_1^2v=\lambda (w+g_1w+g_1^2w)$$ for some nonzero $\lambda \in \mathbb{F}$. Set $u=v-\lambda w$. Then $u\in W$, $u\neq 0$ (as $v$ and $w$ are linearly independent) and $u+g_1u+g_1^2u=0$, contradicting our assumption. \\
With respect to the basis   $(v,g_1v,g_1^2v,w)$ of $V$, the elements $g_1$ and $g_2$ have the following form
$$ g_1=\begin{pmatrix} 0 & 0 & 1 & a_{14}\\
1& 0 & 0 &a_{24}\\
0 & 1 & 0 & a_{34}\\
0 & 0 & 0 & 1
 \end{pmatrix}, \quad 
 g_2=\begin{pmatrix} 
 b_{11} & b_{12} & b_{13} & 0\\
0& b_{22} &  b_{23} & 0\\
0 & b_{32} & b_{33} & 0\\
0  & b_{42} & b_{43} & b_{11}
 \end{pmatrix}.$$
Also with respect to the basis $(v,g_1v,g_1^2v,w-a_{24}v)$ of $V$, the elements $g_1$ and $g_2$ have the following form
$$ g_1=\begin{pmatrix} 0 & 0 & 1 & a_{14}\\
1& 0 & 0 &0\\
0 & 1 & 0 & a_{34}\\
0 & 0 & 0 & 1
 \end{pmatrix}, \quad 
 g_2=\begin{pmatrix} 
 b_{11} & b_{12} & b_{13} & 0\\
0& b_{22} &  b_{23} & 0\\
0 & b_{32} & b_{33} & 0\\
0  & b_{42} & b_{43} & b_{11}
 \end{pmatrix}.$$
 Note that either $g_2$ has characteristic polynomial $(T-1)^2(T^2+T+1)$ or  $g_2$ has characteristic polynomial $(T^2+T+1)^2$  and ${\rm char}(\mathbb{F})\neq 3$. 
 Note that the characteristic polynomial $p_{g_2}$ of $g_2$ satisfies  $$p_{g_2}(T)=(T-b_{11})^2(T^2-(b_{22}+b_{33})T+b_{22}b_{33}-b_{23}b_{32}).$$  
 In the first case, $b_{11}=1$, and so $-(b_{22}+b_{33})=1$. Hence the elements $g_1$ and $g_2$ reduce further to
  $$ g_1=\begin{pmatrix} 0 & 0 & 1 & a_{14}\\
1& 0 & 0 &0\\
0 & 1 & 0 & a_{34}\\
0 & 0 & 0 & 1
 \end{pmatrix}, \quad 
 g_2=\begin{pmatrix} 
 1 & b_{12} & b_{13} & 0\\
0& b_{22} &  b_{23} & 0\\
0 & b_{32} & -b_{22}-1 & 0\\
0  & b_{42} & b_{43} & 1
 \end{pmatrix}$$
and we find 
\begin{eqnarray*}
c_{1-2} &=&2t_{1-2}+c_{12}-2t_{12}\\
t_{121-2}&=&t_{12-12}=t_{1-2}t_{12}-t_{1-2}-c_{12}+t_{12}\\
t_{12-1-2}&=&-t_{1-2}t_{12}+4t_{1-2}-4t_{12}+4c_{12}-4\\
t_{1-21-2}&=&-2t_{1-2}t_{12}-t_{12}^2+6t_{1-2}+7c_{12}-4t_{12}-7\\
0&=& t_{1-2}^2+(2t_{12}-10)t_{1-2}+t_{12}^2-9c_{12}+8t_{12}+7,
\end{eqnarray*}
and $t_{1-2}$ satisfies the required relation.\\
In the second case, examining the coefficient of $T^3$ in $p_{g_2}$, we get $-(b_{33}+b_{22})-2b_{11}=2$, and so $b_{33}=-b_{22}-2b_{11}-2$. 
Hence the elements $g_1$ and $g_2$ reduce further to
  $$ g_1=\begin{pmatrix} 0 & 0 & 1 & a_{14}\\
1& 0 & 0 &0\\
0 & 1 & 0 & a_{34}\\
0 & 0 & 0 & 1
 \end{pmatrix}, \quad 
 g_2=\begin{pmatrix} 
 b_{11} & b_{12} & b_{13} & 0\\
0& b_{22} &  b_{23} & 0\\
0 & b_{32} & -2-2b_{11}-b_{22} & 0\\
0  & b_{42} & b_{43} & b_{11}
 \end{pmatrix}$$
and we find 
\begin{eqnarray*}
c_{1-2}& =&c_{12}+2t_{12}+1\\
t_{121-2}&=&-t_{12}^2+2c_{12}+t_{12}+1\\
t_{12-12}&=&-t_{12}^2-c_{12}-2t_{12}+1\\
t_{1-2-1-2}&=& t_{12}^2+c_{12}+2t_{12}+1\\
t_{1-21-2}&=& t_{12}^2-2c_{12}-2t_{12}-1\\
t_{1-2}&=&-t_{12}-1,
\end{eqnarray*}
and $t_{1-2}$ satisfies the required relation.
\end{proof}

\begin{lem}\label{l:chi}
Let $c$ be a positive integer and let $\zeta_c \in \mathbb{C}$ be a primitive $c$-th root of unity. For $i, j \in \mathbb{Z}/c\mathbb{Z}$, let $\Theta_c^{i,j}\in \mathbb{Z}[T]$ be the minimal polynomial of $\zeta_c^{i}+\zeta_c^{-i}+\zeta_c^{j}+\zeta_c^{-j}$, and let $\Delta_c^{i,j}\in \mathbb{Z}[T]$ be the minimal polynomial of $ \zeta_c^{i+j}+\zeta_c^{i-j}+\zeta_c^{-i+j}+\zeta_c^{-i-j}+2$. Set $$\Theta_c={\rm lcm}(\{\Theta_c^{i,j}: i, j \in  \mathbb{Z}/c\mathbb{Z} \}) \quad \textrm{and} \quad \Delta_c={\rm lcm}(\{\Delta_c^{i,j}: i, j \in  \mathbb{Z}/c\mathbb{Z} \}).$$
Then for every element  $g$ of ${\rm Sp}_4(\mathbb{F})$ of order dividing $c$, we have
$$\Theta_c(\chi_3(g))=0 \quad \textrm{and} \quad \Delta_c(\chi_2(g))=0.$$  
\end{lem}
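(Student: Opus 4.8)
The plan is to pin down, for any $g \in \mathrm{Sp}_4(\mathbb{F})$ of order dividing $c$, the possible eigenvalues of $g$ and then read off $\chi_3(g) = \mathrm{tr}(g)$ and $\chi_2(g)$ from (\ref{e:charpoly}). Since $g$ is symplectic, its eigenvalues (over $\overline{\mathbb{F}}$, or first over $\mathbb{C}$) come in pairs $\{\mu, \mu^{-1}\}$, so the four eigenvalues are $\mu_1, \mu_1^{-1}, \mu_2, \mu_2^{-1}$ for some roots of unity $\mu_1, \mu_2$ of order dividing $c$. Hence $\chi_3(g) = \mu_1 + \mu_1^{-1} + \mu_2 + \mu_2^{-1}$ and, expanding the characteristic polynomial, $\chi_2(g) = \mu_1\mu_2 + \mu_1\mu_2^{-1} + \mu_1^{-1}\mu_2 + \mu_1^{-1}\mu_2^{-1} + 2 = (\mu_1+\mu_1^{-1})(\mu_2+\mu_2^{-1}) + 2$, which is exactly $\zeta_c^{i+j} + \zeta_c^{i-j} + \zeta_c^{-i+j} + \zeta_c^{-i-j} + 2$ when we write $\mu_1 = \zeta_c^i$, $\mu_2 = \zeta_c^j$.

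The first issue to address is that $\mathbb{F}$ may have positive characteristic $p$, in which case the eigenvalues of $g$ lie in $\overline{\mathbb{F}_p}$ rather than $\mathbb{C}$, and the asserted minimal polynomials are taken over $\mathbb{Z}$ via $\mathbb{C}$. The standard fix is the reduction-mod-$p$ argument: since $g$ has finite order dividing $c$ coprime to $p$ (if $p \mid c$ the relevant eigenvalues still have order dividing $c$ and the $p$-part is handled since a unipotent symplectic element of order dividing $c$ forces $\chi_3, \chi_2$ into the same finite set — more cleanly, one lifts $g$ to characteristic zero), the eigenvalues of $g$ are roots of $T^c - 1$ over $\overline{\mathbb{F}_p}$, and there is a ring homomorphism from $\mathbb{Z}[\zeta_c]$ onto the subring of $\overline{\mathbb{F}_p}$ generated by the $c$-th roots of unity that matches $\zeta_c$ with a primitive $c$-th root of unity in $\overline{\mathbb{F}_p}$. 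Applying this homomorphism to the integer polynomial identity $\Theta_c^{i,j}(\zeta_c^i + \zeta_c^{-i} + \zeta_c^j + \zeta_c^{-j}) = 0$ yields $\Theta_c^{i,j}(\chi_3(g)) = 0$ in $\mathbb{F}$, and likewise for $\Delta_c^{i,j}$ and $\chi_2(g)$. Since $\Theta_c^{i,j} \mid \Theta_c$ and $\Delta_c^{i,j} \mid \Delta_c$ in $\mathbb{Z}[T]$, we conclude $\Theta_c(\chi_3(g)) = 0$ and $\Delta_c(\chi_2(g)) = 0$.

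So the concrete steps are: (1) reduce to the case $\mathbb{F} = \overline{\mathbb{F}}$ algebraically closed, and handle $p = 0$ and $p > 0$ uniformly by noting the eigenvalues of $g$ are $c$-th roots of unity $\zeta^i, \zeta^{-i}, \zeta^j, \zeta^{-j}$ for a fixed primitive $c$-th root of unity $\zeta \in \mathbb{F}$ and some $i, j$ (using that $g$ is semisimple when $p \nmid c$, and separately accounting for the unipotent contribution when $p \mid c$, where the Jordan form still forces eigenvalues that are $c$-th roots of unity); (2) compute $\chi_3(g) = \zeta^i + \zeta^{-i} + \zeta^j + \zeta^{-j}$ and $\chi_2(g) = \zeta^{i+j} + \zeta^{i-j} + \zeta^{-i+j} + \zeta^{-i-j} + 2$ directly from (\ref{e:charpoly}); (3) use the ring homomorphism $\mathbb{Z}[\zeta_c] \to \mathbb{F}$ sending $\zeta_c \mapsto \zeta$ to transport the defining relations $\Theta_c^{i,j}(\zeta_c^i + \zeta_c^{-i} + \zeta_c^j + \zeta_c^{-j}) = 0$ and $\Delta_c^{i,j}(\zeta_c^{i+j} + \cdots + 2) = 0$ from $\mathbb{C}$ to $\mathbb{F}$; (4) invoke divisibility in $\mathbb{Z}[T]$ to replace $\Theta_c^{i,j}, \Delta_c^{i,j}$ by their least common multiples $\Theta_c, \Delta_c$. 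The main obstacle is step (1) in the case $p \mid c$: one must argue carefully that even then the element $g$ of order dividing $c$ has characteristic polynomial dividing $(T^{c/p^a})^{p^a} - 1$ for the appropriate power, so that all eigenvalues are $c$-th roots of unity and the pairing $\{\mu, \mu^{-1}\}$ still holds; alternatively one avoids this by lifting $g$ to an element of the same order in $\mathrm{Sp}_4$ over a characteristic-zero local ring via the theory of good/integral models, which is cleaner but heavier machinery. Everything else is a routine expansion of a degree-four characteristic polynomial plus a standard reduction-mod-$p$ specialization of polynomial identities.
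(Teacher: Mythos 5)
Your proposal is correct and follows essentially the same route as the paper: pair the eigenvalues of $g$ under inversion so that $\chi_3(g)$ and $\chi_2(g)$ are the images of $\zeta_c^{i}+\zeta_c^{-i}+\zeta_c^{j}+\zeta_c^{-j}$ and $\zeta_c^{i+j}+\zeta_c^{i-j}+\zeta_c^{-i+j}+\zeta_c^{-i-j}+2$ under a ring homomorphism $\mathbb{Z}[\zeta_c]\to\mathbb{F}$, then transport the minimal-polynomial relations and use that each $\Theta_c^{i,j}$, $\Delta_c^{i,j}$ divides the corresponding lcm. The point you flag as delicate when $p\mid c$ is resolved in the paper exactly by your first suggested fix (no characteristic-zero lifting): write $c=df$ with $f$ the $p$-part, note the eigenvalues are powers of a primitive $d$-th root of unity $\omega\in\mathbb{F}$, and take the specialization $\zeta_c\mapsto\omega$.
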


\begin{proof}
Set $p={\rm char}(\mathbb{F})$. Let $g$ be an element of ${\rm Sp_4}(\mathbb{F})$ of order dividing $c$.  Write $c=df$ where $d$ and $f$ are positive integers such that $f\geq 1$ is the largest  nonnegative power of $p$ dividing $c$. In particular, $d$ and $f$ are coprime integers.  Let $\omega\in \mathbb{F}$ be a primitive $d$-th root of unity. There exist  $i,j\in \mathbb{Z}/c\mathbb{Z}$ such that 
$$p_g(T)=(T-\omega^{i})(T-\omega^{-i})(T-\omega^j)(T-\omega^{-j}).$$
In particular, by (\ref{e:charpoly}), $$\chi_3(g)=\omega^i+\omega^{-i}+\omega^j+\omega^{-j} \quad \textrm{and} \quad \chi_2(g)= 2+ \omega^{i+j}+\omega^{i-j}+\omega^{-i+j}+\omega^{-i-j}.$$
Let $R$ be the subring of $\mathbb{C}$ generated by $\zeta_c$ and let $\alpha: R\rightarrow \mathbb{F}$ be the ring homomorphism defined by $\alpha(\zeta_c)=\omega$.   The ring homomorphism $\alpha: R \rightarrow \mathbb{F}$ induces a ring homomorphism $\alpha: M_4(R)\rightarrow M_4(\mathbb{F})$ in a natural way (see Lemma \ref{l:mapchp}). Let $h \in M_4(R)$ and $g'\in M_4(\mathbb{F})$ be  defined as follows:
$$h={\rm diag}(\zeta_c^i, \zeta_c^{-i},\zeta_c^{j}, \zeta_c^{-j}) \quad \textrm{and} \quad g'={\rm diag}(\omega^i, \omega^{-i},\omega^{j}, \omega^{-j}).$$
Then $g'=\alpha(h)$ and $p_g(T)=p_{g'}(T)$ 
 Also $$\chi_3(h)= \zeta_c^i+\zeta_c^{-i}+\zeta^j+\zeta^{-j} \quad \textrm{and} \quad \chi_2(h)=2+\zeta^{i+j}+\zeta^{i-j}+\zeta^{-i+j}+\zeta^{-i-j}.$$
 It follows that  $$\Theta_c(\chi_3(h))=0 \quad \textrm{and} \quad \Delta_c(\chi_2(h))=0.$$  
 Note that for any $\Theta\in \mathbb{Z}[T]$ and any $r\in R$, we have $\Theta(\alpha(r))=\alpha(\Theta(r))$.
 Therefore $$\Theta_c(\alpha(\chi_3(h)))=\alpha(\Theta_c(\chi_3(h)))=0 \quad \textrm{and} \quad \Delta_c(\alpha(\chi_2(h)))=\alpha(\Delta_c(\chi_2(h)))=0.$$
 The result follows from the fact that   $\chi_k(g)=\chi_k(g')=\alpha(\chi_k(h))$ for $k\in\{1,2\}$. 
\end{proof}

We can now prove Theorem \ref{t:33sp}.

\noindent \textit{Proof of Theorem \ref{t:33sp}.}
We first consider part (i).
Suppose there is a positive integer $r$ such that ${\rm Sp}_4(p^r)$ is a  $(3,3,c)$-group. Let $\mathbb{F}$ be the finite field of characteristic $p$ and order $p^r$. Then there exist elements $g_1,g_2$ of ${\rm Sp}_4(\mathbb{F})$ of order 3 such that $g_1g_2$ has order  dividing $c$ and  $\langle g_1,g_2 \rangle ={\rm Sp}_4(\mathbb{F})$. Note that $\langle g_1,g_2\rangle$ is absolutely irreducible.  By Lemma \ref{l:chf}, $\mathbb{F}$ is the character field of $\langle g_1,g_2\rangle$. Let $\rho(X,Y,Z)=(Z+X+1)(Z^2+(2X-10)Z+X^2-9Y+8X+7)$.  By Proposition \ref{p:chf}, $\mathbb{F}$ is an image of $\mathbb{Z}[X,Y,Z]/\langle \rho(X,Y,Z)\rangle$. By Lemma \ref{l:chi}, this epimorphism factors  over 
$$ A=\mathbb{F}_p[X,Y,Z]/\langle \Theta_c(X), \Delta_c(Y), \rho(X,Y,Z) \rangle.$$  (The polynomials $\Theta_c, \Delta_c\in \mathbb{Z}[T]$ are defined  in Lemma \ref{l:chi}.) 
Since $A$ is a finite-dimensional algebra over the field $\mathbb{F}_p$, seen as a finite-dimensional vector space over $\mathbb{F}_p$,  $A$ has only finitely many subspaces, say $A_1,\dots, A_m$. In particular there exists $1\leq i\leq m$ such that $\mathbb{F}\cong A/A_i$.  Noting that $c$ has a finite number of divisors, it follows that there are only finitely many positive integers $r$ such that ${\rm Sp}_4(p^r)$ is a $(3,3,c)$-group. \\
We now consider part (ii). By part (i) it is enough to show that if ${\rm PSp}_4(p^r)$ is a $(3,3,c)$-group, then  ${\rm Sp}_4(p^r)$ is a $(3,3,2c)$-group. 
We suppose that $p>2$, as otherwise ${\rm PSp}_4(p^r)\cong {\rm Sp}_4(p^r)$.\\
 Suppose there is a positive integer $r$ such that  ${\rm PSp}_4(p^r)$ is a  $(3,3,c)$-group.  Let $\mathbb{F}$ be the finite field of characteristic $p$ and order $p^r$ and write $Z({\rm Sp}_4(\mathbb{F}))=Z$.  Then there exist elements $g_1,g_2$ of ${\rm Sp}_4(\mathbb{F})$ of order 3 such that $g_1g_2$ has order dividing $2c$ and  $\langle g_1,g_2 \rangle Z ={\rm Sp}_4(\mathbb{F})$. Since ${\rm Sp}_4(\mathbb{F})$ has no subgroup of index 2, we deduce that $\langle g_1,g_2\rangle={\rm Sp}_4(\mathbb{F})$ and so  ${\rm Sp}_4(\mathbb{F})$ is a $(3,3,2c)$-group.\\
 Part (iii) now follows from parts (i) and (ii) and Proposition \ref{p:lalalaboudou}. 
$\square$ \\

\end{document}